\setheadfoot{\onelineskip}{2\onelineskip} 
  \setlist{nosep}
\tikzset{
	tick/.style={postaction={
  	decorate,
    decoration={markings, mark=at position 0.5 with
    	{\draw[-] (0,.4ex) -- (0,-.4ex);}}}
  }
} 
\tikzset{
	slash/.style={postaction={
  	decorate,
    decoration={markings, mark=at position 0.5 with
    	{\node[font=\footnotesize] {\rotatebox{90}{$\approx$}};}}}
  }
}
\theoremstyle{definition}
\newtheorem{definitionx}{Definition}[chapter]
\newtheorem{notation}[definitionx]{Notation}
\theoremstyle{plain}
\newtheorem{theorem}[definitionx]{Theorem}
\newtheorem{proposition}[definitionx]{Proposition}
\newtheorem{corollary}[definitionx]{Corollary}
\newtheorem{lemma}[definitionx]{Lemma}
\newtheorem*{theorem*}{Theorem}
\newtheorem*{proposition*}{Proposition}
\newtheorem*{corollary*}{Corollary}
\newtheorem*{lemma*}{Lemma}
\newtheorem*{warning*}{Warning}
\newenvironment{example}
  {\pushQED{\qed}\examplex}
  {\popQED\endexamplex}
 \newenvironment{remark}
  {\pushQED{\qed}\remarkx}
  {\popQED\endremarkx}
  \newenvironment{definition}
  {\pushQED{\qed}\definitionx}
  {\popQED\enddefinitionx}
\DeclareSymbolFont{stmry}{U}{stmry}{m}{n}
\DeclareMathSymbol\fatsemi\mathop{stmry}{"23}
\DeclareFontFamily{U}{mathx}{\hyphenchar\font45}
\DeclareFontShape{U}{mathx}{m}{n}{
      <5> <6> <7> <8> <9> <10>
      <10.95> <12> <14.4> <17.28> <20.74> <24.88>
      mathx10
      }{}
\DeclareSymbolFont{mathx}{U}{mathx}{m}{n}
\DeclareMathAccent{\widecheck}{0}{mathx}{"71}
\DeclareMathOperator{\Hom}{Hom}
\DeclareMathOperator*{\colim}{colim}
\DeclareMathOperator{\ob}{Ob}
\DeclareMathOperator{\StrFunn}{StrFun}
\newcommand{\distpb}{{\color{blue}\lrcorner}}
\newcommand{\cat}[1]{\mathcal{#1}}
\newcommand{\Cat}[1]{\mathbf{#1}}
\newcommand{\id}{\mathrm{id}}
\newcommand{\too}{\longrightarrow}
\newcommand{\tto}{\rightrightarrows}
\newcommand{\To}[2][]{\xrightarrow[#1]{#2}}
\newcommand{\Tto}[3][13pt]{\begin{tikzcd}[cramped, ampersand replacement=\&, text height=1ex, text depth=.3ex, column sep=#1]\ar[r, shift left=4pt, "#2"]\ar[r, shift right=4pt, "#3"']\&{}\end{tikzcd}}
\newcommand{\Too}[1]{\xrightarrow{\;\;#1\;\;}}
\newcommand{\from}{\leftarrow}
\newcommand{\From}[1]{\xleftarrow{#1}}
\newcommand{\imp}{\Rightarrow}
\newcommand{\card}{\,^{\sharp}}
\newcommand{\op}{^\tn{op}}
\newcommand{\tn}[1]{\textnormal{#1}}
\newcommand{\smset}{\Cat{Set}}
\newcommand{\smcat}{\Cat{Cat}}
\newcommand{\lgcat}{\Cat{CAT}}
\newcommand{\prof}{\mathbb{P}\Cat{rof}}
\newcommand{\ccatsharp}{\mathbb{C}\Cat{at}^{\sharp}}
\newcommand{\sspan}{\mathbb{S}\Cat{pan}}
\newcommand{\yon}{\mathcal{y}}
\newcommand{\poly}{\Cat{Poly}}
\newcommand{\tri}{\mathbin{\triangleleft}}
\newcommand{\restrict}[1]{\big|_{#1}}
\newcommand{\qqand}{\qquad\text{and}\qquad}
\newcommand{\coalg}{\tn{-}\Cat{Coalg}}
\newcommand{\bic}[2]{{}_{#1}\Cat{Comod}_{#2}}
\newcommand{\E}{{\cat{E}}}
\renewcommand{\P}{{\Cat{P}}}
\newcommand{\trii}{\tri}
\newcommand{\comon}{\Cat{Comon}}
\newcommand{\Eh}{{\widehat{\E}}}
\newcommand{\A}{{\cat{A}}}
\newcommand{\B}{{\cat{B}}}
\newcommand{\Ah}{{\widehat{\A}}}
\newcommand{\CC}{{\mathcal{C}}}
\newcommand{\DD}{{\mathcal{D}}}
\renewcommand{\C}{\mathbf{C}}
\newcommand{\D}{\mathbf{D}}
\newcommand{\biglens}[2]{
     \begin{bmatrix}{\vphantom{f_f^f}#2} \\ {\vphantom{f_f^f}#1} \end{bmatrix}
}
\newcommand{\littlelens}[2]{
     \begin{bsmallmatrix}{\vphantom{f}#2} \\ {\vphantom{f}#1} \end{bsmallmatrix}
}
\newcommand{\lens}[2]{
  \relax\if@display
     \biglens{#1}{#2}
  \else
     \littlelens{#1}{#2}
  \fi
}
\newcommand{\pb}[2]{\prescript{\vphantom{X}}{#1\vphantom{X}}{\times}^{\vphantom{X}}_{#2\vphantom{X}}}
\newcommand{\frowny}[1]{\overset{#1}{\frown}}
\newcommand{\disc}{\tn{disc}}
\begin{document}

\title{Structures on Categories of Polynomials}

\author{Brandon T. Shapiro \and David I. Spivak}

\date{\vspace{-.2in}}

\maketitle

\begin{abstract}
We define the monoidal category $(\poly_\E,\yon,\tri)$ of polynomials under composition in any category $\E$ with finite limits, including both cartesian and vertical morphisms of polynomials, and generalize to this setting the Dirichlet tensor product of polynomials $\otimes$, duoidality of $\otimes$ and $\tri$, closure of $\otimes$, and coclosures of $\tri$. We also prove that $\tri$-comonoids in $\poly_\E$ are precisely the internal categories in $\E$ whose source morphism is exponentiable, generalizing a result of Ahman-Uustalu equating categories with polynomial comonads, and show that coalgebras in this setting correspond to internal copresheaves. Finally, the double category of ``typed'' polynomials in $\E$ is recovered using $\tri$-bicomodules in $\poly_\E$.
\end{abstract}

\setcounter{tocdepth}{1}
\begin{KeepFromToc}
\tableofcontents
\end{KeepFromToc}

\chapter{Introduction}

Polynomial functors, which in the simplest case are endofunctors $F\colon\smset\to\smset$ that send a set $X$ to the set
\[
\sum_{I \in P} X^{p[I]}
\]
for some set $P$ and some family of sets $(p[I])_{I \in P}$, have been used in mathematics to study topics ranging across algebra, topology, dynamics, computer science, and even category theory itself. They have been generalized by various authors such as in \cite{kock2012polynomial,weber2015polynomials}\footnote{While our focus will be on the approach taken in these papers by Gambino--Kock and Weber respectively, other generalizations can be found at \href{https://topos.site/p-func-workshop/2021/}{https://topos.site/p-func-workshop/2021/} and \href{https://topos.site/p-func-workshop/}{https://topos.site/p-func-workshop/}} to, for instance, include multiple variables and go between categories other than that of sets. 

In more recent work by Ahman--Uustalu \cite{ahman2016directed}, Garner, and Spivak \cite{spivak2021functorial}, the category $\poly=\poly_\smset$ of single-variable polynomial endofunctors on the category of sets has been shown to host an enormous range of categorical structure and models for various concepts in category theory and applications. For instance, it has many different monoidal structures including but not limited to addition, multiplication, and composition of polynomials; many pairs of these monoidal structures form duoidal or distributive structures on the category; many of these monoidal structures have closures and/or coclosures; comonoids with respect to composition are precisely categories and their homomorphisms are cofunctors; and bicomodules between comonoids correspond to parametric right adjoint functors between presheaf categories.

The aim of this paper is to extend these additional structures and results to the category $\poly_\E$ of polynomials in any category $\E$ with finite limits. To do so, we first define $\poly_\E$ by extending the construction of the category of polynomials and morphisms between them from \cite{kock2012polynomial} to the more general setting introduced in \cite{weber2015polynomials} which considered only cartesian morphisms of polynomials. Unlike both Gambino--Kock and Weber's approaches, we consider only single-variable polynomials which form the objects of a monoidal category (under composition) rather than the 1-cells of a bicategory or horizontal morphisms of a double category, so as to be able to work with them in the style of \cite{ahman2016directed,spivak2021functorial} and other work by the second author. This requires the additional assumption of finite products in the category $\E$ that are not needed in \cite{weber2015polynomials}, but which are nonetheless present in most examples of interest and necessary for the definition of additional structures on $\poly_\E$ such as the Dirichlet tensor product (\cref{tensordef}). We also recover in \cref{typedpolybicomod} and \cref{typedpolycompose} the double category of multivariable polynomials in $\E$ within the framework of bicomodules in $\poly_\E$, so that no structure from \cite{kock2012polynomial,weber2015polynomials} is left behind by taking this approach.

A primary motivation for this work is to generalize these structures and results from $\poly$ to the setting of polynomials in $\smcat$, which have been studied by various authors such as in \cite{weber2015polynomials,weber2015operads,shapiro2022thesis} and provide an elegant formalism for constructions in category theory such as free (co)product completions, free lax (co)limit completions, and wreath products of categories. In this setting, \cref{comonoidcategory} shows that comonoids for the composition monoidal structure on $\poly_{\smcat}$ are precisely strict double categories whose source functor is exponentiable, so that double categories as well as ordinary categories can be studied by polynomial methods. Likewise, as discussed in \cref{doublecopresheaves}, in this setting \cref{coalgcopresheaves} demonstrates that coalgebras for polynomial comonads on $\smcat$ are precisely the standard notion of double copresheaves (\cite{pare2011yoneda}) on the transpose of the associated double category, recovering by a basic construction for polynomials a fundamental concept from double category theory.

Both in defining the monoidal category of polynomials in a finite limit category $\E$ and proving \cref{comonoidcategory} we make extensive use of the technique of translating potentially difficult proofs in a general finite limit category $\E$ to more straightforward proofs in a presheaf category. The arguments in the more general setting typically involve large diagrams of morphisms and pullback squares which can be particularly illuminating and are included wherever it is practical to do so, but in many instances these diagrams are too large and tedious to be easily understood or fit within the bounds of a page. By contrast, the same proofs in a presheaf category can typically rely on the same element-wise reasoning available in the category of sets and as such are often much easier to follow. This technique is first introduced in \cref{sec.presheaves} and relies on the theory of dense functors.

In the interest of brevity, we limit the scope of this paper to structures in the category $\poly_\E$ that require only finite limits in the category $\E$, though we will occasionally allow related additional assumptions such as $\E$ being cartesian closed. In particular we do not assume that $\E$ has coproducts and as such the usual addition and multiplication operations on $\poly$ are not available. In future work, we plan to thoroughly explore the additional structure present in $\poly_\E$ when $\E$ is a lextensive category, i.e.\ one with finite limits and coproducts that interact nicely with pullbacks.

\section*{Notation}

We write 1 for the terminal object in any category, $\smset$ for the category of sets, $\smcat$ for the category of small categories, and $\lgcat$ for the category of locally small categories.

In a category $\E$ with an object $A$, we write $\E/A$ for the overcategory at $A$, whose objects are morphisms in $\E$ with codomain $A$ and whose morphisms are commuting triangles.

For a category $\A$, we write $\Ah$ for the category $\smset^{\A\op}$ of presheaves on $\A$. For $X$ a presheaf on $\A$ and $a$ an object in $\A$, we write $X_a$ for the set $X(a)$. We write $\yon(a)\coloneqq\A(-,a)$ to denote the presheaf represented by $a$.

Given maps $A\To{f}C\From{g}B$, we denote the pullback as either $A\pb{f}{g}B$ or $A \times_C B$.


\section*{Acknowledgments}

This material is based upon work supported by the Air Force Office of Scientific Research under award number FA9550-20-1-0348.

Nate Soares and Jesse Liptrap first pointed out to us a counterexample to exponentiability of the right coclosure, though we chose to describe a different one in \cref{coclosurecounterexample}. Bryce Clarke provided many helpful comments and references.

\chapter{Categories with Pullbacks}\label{chap.pb}

We begin by reviewing pullback functors between overcategories and their left and right adjoints, with an emphasis on the example of presheaf categories and functors which preserve these structures.

\section{Adjoints to pullback}

\begin{definition}\label{sigmadelta}
For any morphism $f \colon B \to A$ in a category $\E$ with pullbacks, $\Delta_f$ denotes the pullback functor $\E/A \to \E/B$ and $\Sigma_f$ denotes the postcomposition functor $\E/B \to \E/A$.
\end{definition}

Note that $\Sigma$ is functorial in the sense that the assignments $A \mapsto \E/A$ and $f \mapsto \Sigma_f$ form a functor $\E \to \lgcat$, and analogously $\Delta$ forms a pseudofunctor $\E\op \to \lgcat$.

\begin{lemma}
For any morphism $f \colon B \to A$ in a category $\E$ with pullbacks, the functor $\Sigma_f$ is left adjoint to $\Delta_f$.
\end{lemma}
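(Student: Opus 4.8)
The plan is to construct a natural bijection of hom-sets
\[
\Hom_{\E/A}(\Sigma_f g,\, p) \iso \Hom_{\E/B}(g,\, \Delta_f p),
\]
natural in each object $g\colon X \to B$ of $\E/B$ and each object $p\colon E \to A$ of $\E/A$, using only the universal property of the pullback that defines $\Delta_f$. Here $\Sigma_f g$ is the composite $f\circ g\colon X \to A$, and $\Delta_f p$ is the projection $\pi_B\colon B\times_A E \to B$ arising from pulling $p$ back along $f$. First I would unwind both sides: a morphism $\Sigma_f g \to p$ in $\E/A$ is a map $h\colon X \to E$ with $p\circ h = f\circ g$, while a morphism $g \to \Delta_f p$ in $\E/B$ is a map $k\colon X \to B\times_A E$ with $\pi_B\circ k = g$.

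By the universal property of $B\times_A E$, a map $k\colon X \to B\times_A E$ is the same as a pair $(u\colon X \to B,\ v\colon X \to E)$ with $f\circ u = p\circ v$, and the constraint $\pi_B\circ k = g$ forces $u = g$. Hence $k$ is determined exactly by a map $v\colon X \to E$ satisfying $p\circ v = f\circ g$, which is precisely the data of a morphism $h$ on the left. This gives the desired bijection, sending $h$ to the map induced by the pair $(g, h)$ and sending $k$ to $\pi_E\circ k$. Equivalently, one could package this as a unit $\eta_g\colon g \to \Delta_f\Sigma_f g$, the map $X \to B\times_A X$ induced by $(g,\id_X)$, together with a counit $\epsilon_p\colon \Sigma_f\Delta_f p \to p$, namely the projection $\pi_E\colon B\times_A E \to E$; the two triangle identities then follow from the pullback universal property.

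Finally I would verify naturality in both variables. Naturality in $p$ tracks a morphism $p \to p'$ over $A$ through the induced map on pullbacks (the functoriality of $\Delta_f$) and through postcomposition on the $E$-coordinate; naturality in $g$ tracks a morphism $g' \to g$ over $B$ through precomposition with the underlying map of domains and through the fact that $\Sigma_f$ is postcomposition by $f$. Both reductions follow by comparing the two legs into $B$ and into $E$ and invoking the uniqueness half of the pullback universal property. The entire substantive content is this universal property; I expect the only real care needed is in keeping the two projections $\pi_B$ and $\pi_E$ straight, so that no genuine obstacle arises beyond routine diagram-checking.
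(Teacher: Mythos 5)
Your proposal is correct and follows essentially the same route as the paper, which simply invokes the universal property of the pullback to identify maps $D \to C$ over $A$ with maps $D \to \Delta_f C$ over $B$; you have just spelled out the details (the explicit bijection, unit/counit, and naturality checks) that the paper leaves implicit.
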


\begin{proof}
By the universal property of the pullback, for any morphisms $C \to A$ and $D \to B$, maps $D \to C$ over $A$ correspond bijectively with maps $D \to \Delta_f C$ over $B$.
\end{proof}

\begin{definition}\label{pi}
A morphism $f \colon B \to A$ in a category $\E$ with pullbacks is \emph{exponentiable} if the functor $\Delta_f \colon \E/A \to \E/B$ also has a right adjoint, in which case we denote it by $\Pi_f \colon \E/B \to \E/A$.
\end{definition}

When $f$ is exponentiable, for an object $g \colon C \to B$ in the category $\E/B$, the counit $\epsilon$ of the adjunction has the form $\Delta_f\Pi_f C \to C$ over $B$, as shown in the diagram \eqref{eqn.pidist}.
\begin{equation}\label{eqn.pidist}
\begin{tikzcd}[row sep=small]
&[-20pt] \Delta_f \Pi_f C \rar{h} \ar[dl, "\epsilon"'] \ar[phantom]{ddr}[pos=0]{\distpb} &[-5pt] \Pi_f C \ar[dd, "\Pi_fg"] \\
C \ar{dr}[swap]{g} \\
& B \rar[swap]{f} & A
\end{tikzcd}
\end{equation}
By definition, $\Pi_f C$ is defined by the universal property that for all objects $D$ over $A$ in $\E$, 
\begin{equation}
\Hom_{\E/A}\left(D,\Pi_f C\right) \cong \Hom_{\E/B}\left(\Delta_f D, C\right),
\end{equation}
or equivalently for arbitrary objects $D$ in $\E$,
\begin{equation}\label{eqn.pihoms}
\Hom_{\E}\left(D,\Pi_f C\right) \cong \coprod_{D \to A} \Hom_{\E/B}\left(\Delta_f D, C\right),
\end{equation}
naturally in $D$ in both descriptions. This shows that any pullback of $f$ whose projection to $B$ factors through $C$ arises uniquely as a pullback of $h$; in other words, the pullback square in \eqref{eqn.pidist} is a \emph{distributivity pullback} around $(f,g)$, terminal among pullbacks of $f$ factoring through $g$ in this manner \cite[Definitions 2.2.1 -- 2.2.2]{weber2015polynomials}.

\begin{example}\label{ex.iso_distributivity}
If $f$ (resp.\ $g$) is an isomorphism then $\Pi_fC=C$ and $\Pi_fg=g$ (resp.\ $\Pi_fC=A$ and $\Pi_fg=\id_A$) forms a distributivity pullback.
\end{example}

\begin{lemma}[{\cite[Lemmas 2.2.4 and 2.2.5]{weber2015polynomials}}]\label{comppb}
Exponentiable morphisms in a category $\E$ with pullbacks are closed under composition and under pullback along morphisms in $\E$.
\end{lemma}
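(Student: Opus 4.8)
The plan is to reduce both closure statements to formal manipulations of the adjunction $\Sigma_f\dashv\Delta_f$ established in the previous lemma, using that a morphism is exponentiable exactly when its pullback functor $\Delta$ admits a right adjoint. For closure under composition, suppose $g\colon C\to B$ and $f\colon B\to A$ are exponentiable. Since $\Delta$ is a pseudofunctor (as noted after \cref{sigmadelta}), pulling back along the composite is, up to coherent isomorphism, the composite of the pullback functors, so $\Delta_{g\then f}\cong\Delta_g\circ\Delta_f$ as functors $\E/A\to\E/C$. Each factor has a right adjoint by hypothesis, namely $\Pi_f$ and $\Pi_g$, so the composite has the right adjoint $\Pi_f\circ\Pi_g$; hence $g\then f$ is exponentiable, with $\Pi_{g\then f}\cong\Pi_f\circ\Pi_g$. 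This step is purely formal.

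For closure under pullback, let $f\colon B\to A$ be exponentiable and form its pullback along an arbitrary $a\colon A'\to A$, yielding $f'\colon B'\to A'$ with second projection $b\colon B'\to B$, so that $B'=A'\times_A B$ and $\Delta_f(a)=b$. I would first invoke the canonical slice equivalences $\E/A'\simeq(\E/A)/a$ and $\E/B'\simeq(\E/B)/b$, and then check that under these equivalences $\Delta_{f'}$ is precisely the functor $(\E/A)/a\to(\E/B)/b$ induced by $\Delta_f$ on slices. This identification carries the real content: it amounts to the pullback-pasting isomorphism $C\times_{A'}B'\cong C\times_A B$, natural in any $c\colon C\to A'$, which holds because $B'=A'\times_A B$. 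With it in hand, the claim follows from a general sublemma I would state and prove: if a functor $F$ between categories with pullbacks has a right adjoint $G$, then for each object $x$ the induced functor $F_x\colon\mathcal{C}/x\to\mathcal{D}/Fx$ also has a right adjoint, given on an object $q\colon d\to Fx$ by the pullback $x\times_{GFx}Gd\to x$ of $Gq$ along the unit component $\eta_x\colon x\to GFx$. Applying this sublemma to $F=\Delta_f$, $G=\Pi_f$, and $x=a$ produces a right adjoint to $\Delta_{f'}$, so $f'$ is exponentiable.

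The main obstacle is this pullback case, since the composition case is immediate. Both the verification that $\Delta_{f'}$ is the slice-induced functor and the adjunction calculation in the sublemma require careful bookkeeping with the finite limits in $\E$, and I expect the latter — matching morphisms $Fc\to d$ over $Fx$ with cones into $x\times_{GFx}Gd$, by transposing across $F\dashv G$ and invoking naturality of $\eta$ together with the universal property of the pullback — to be the most delicate point. As an alternative I could argue entirely through the distributivity-pullback characterization recalled around \eqref{eqn.pidist}, constructing the distributivity pullbacks witnessing exponentiability of $f'$ and of $g\then f$ by pasting those for $f$ and $g$; this keeps everything in elementary pullback language at the cost of larger diagrams in place of the clean adjoint bookkeeping.
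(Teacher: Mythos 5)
Your argument is correct, but note that the paper itself offers no proof of this lemma: it is imported wholesale from Weber, whose Lemmas 2.2.4 and 2.2.5 argue in the elementary language of distributivity pullbacks (the route you sketch only as an alternative at the end), pasting the universal pullback-around-$(f,g)$ diagrams to witness exponentiability of the composite and of the pulled-back morphism. Your primary route is the adjoint-functor one: for composition, $\Delta_{f\circ g}\cong\Delta_g\circ\Delta_f$ has right adjoint $\Pi_f\circ\Pi_g$, which is immediate and standard; for pullback-stability, you pass to the slice equivalences $\E/A'\simeq(\E/A)/a$ and $\E/B'\simeq(\E/B)/b$, identify $\Delta_{f'}$ with the functor induced by $\Delta_f$ on slices via the pullback-pasting isomorphism $C\times_{A'}B'\cong C\times_A B$, and invoke the general fact that a left adjoint $F\dashv G$ induces a left adjoint on slices whose right adjoint sends $q\colon d\to Fx$ to the pullback of $Gq$ along $\eta_x$. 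I checked the transposition in that sublemma: the over-$Fx$ condition on a map $Fc\to d$ transposes, by naturality of $\eta$, to the cone condition defining $x\times_{GFx}Gd$, so the adjunction does hold and $f'$ is exponentiable. What your approach buys is conceptual economy and a formula $\Pi_{f\circ g}\cong\Pi_f\circ\Pi_g$ for free; what Weber's buys is that it stays entirely within the pullback calculus the rest of the paper uses for explicit computations (e.g.\ the composite-polynomial diagram \eqref{eqn.comp}), at the cost of larger diagrams. Either is acceptable here.
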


\begin{definition}
For $\E$ and $\E'$ both categories with pullbacks, a pullback-preserving functor $F \colon \E \to \E'$ \emph{preserves exponentials} if for every exponentiable morphism $f \colon B \to A$ in $\E$, $F(f) \colon F(B) \to F(A)$ is exponentiable in $\E$ and the diagram in \eqref{eqn.functorpi} commutes.
\begin{equation}\label{eqn.functorpi}
\begin{tikzcd}[column sep=large]
\E/B \rar{\Pi_f} \dar[swap]{F} & \E/A \dar{F} \\
\E'/F(B) \rar[swap]{\Pi_{F(f)}} & \E'/F(A)
\end{tikzcd}
\end{equation} 
A pullback-preserving functor $F \colon \E \to \E'$ \emph{reflects exponentials} when for any diagram as in \eqref{eqn.pbaround} in $\E$ with the morphism $f$ exponentiable, 
\begin{equation}\label{eqn.pbaround}
\begin{tikzcd}[row sep=small]
&[-15pt] \Delta_f D \rar \ar{dl} \ar[phantom]{ddr}[pos=0]{\lrcorner} & D \ar{dd} \\
C \ar{dr}[swap]{g} \\
& A \rar[swap]{f} & B
\end{tikzcd}
\end{equation}
if $F(D) \cong \Pi_{F(f)} F(C)$ then $D \cong \Pi_f C$.
\end{definition}

Functors which preserve and reflect exponentials will be useful for transforming questions about arbitrary categories with pullbacks into questions about particularly nice categories with pullbacks which are thereby easier to answer.

\section{Presheaf categories and singular functors}\label{sec.presheaves}

The nice categories with pullbacks for our purposes will be presheaf categories. For any category $\A$, its presheaf category $\Ah \coloneq \smset^{\A\op}$ is \emph{locally cartesian closed}, meaning that every morphism is exponentiable. In fact, in $\Ah$ each right adjoint $\Pi_f$ has an explicit formula.

\begin{lemma}\label{presheafpi}
Given morphisms $g \colon Z \to Y$ and $f \colon Y \to X$ in $\Ah$, there is a morphism $\Pi_f g \colon \Pi_f Z \to X$ where for each object $a$ of $\A$, 
\begin{equation}\label{eqn.presheafpi}
\left(\Pi_f Z\right)_a \cong \coprod_{\sigma \colon \yon(a) \to X} \Hom_{\Ah/Y}\left(\Delta_\sigma Y, Z\right)
\end{equation}
and the component of the map $\Pi_f g$ at $A$ is the projection map sending the pair $\left(\sigma,\gamma\right)$ to the element $\sigma \in \Hom_\Ah\left(\yon(a), X\right) \cong X_a$.
\end{lemma}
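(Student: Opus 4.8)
The plan is to take for granted that the adjoint $\Pi_f$ exists—which it does, since $\Ah$ is locally cartesian closed, as recorded just before the statement—and to read off its values by combining the Yoneda lemma with the universal property \eqref{eqn.pihoms}. First I would use Yoneda to rewrite the set in question as $(\Pi_f Z)_a \cong \Hom_\Ah(\yon(a), \Pi_f Z)$. Then, substituting $D = \yon(a)$ into the natural isomorphism \eqref{eqn.pihoms} (with $B=Y$, $A=X$, and $C=Z$), I would obtain
\[
\Hom_\Ah\bigl(\yon(a), \Pi_f Z\bigr) \cong \coprod_{\sigma \colon \yon(a) \to X} \Hom_{\Ah/Y}\bigl(\Delta_f \yon(a), Z\bigr),
\]
where in the $\sigma$-indexed summand the representable $\yon(a)$ is viewed as an object of $\Ah/X$ via $\sigma$.

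The next step is to match the object $\Delta_f \yon(a)$ occurring here with the object $\Delta_\sigma Y$ appearing in the claimed formula \eqref{eqn.presheafpi}. Both are computed by the single pullback of $f \colon Y \to X$ against $\sigma \colon \yon(a) \to X$; that is, both are the fiber product $\yon(a) \times_X Y$, and the two notations merely record which of the symmetric projections is taken as the structure map. Regarding this pullback as an object of $\Ah/Y$ via its projection to $Y$ identifies $\Delta_f \yon(a)$ with $\Delta_\sigma Y$, which turns the display above into exactly \eqref{eqn.presheafpi}. Since \eqref{eqn.pihoms} is natural in $D$ and the Yoneda isomorphism is natural in $a$, the presheaf structure on $\Pi_f Z$ is automatically the one carried by the right-hand side, so no separate verification of functoriality in $a$ is needed.

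Finally, to pin down the map $\Pi_f g$, I would track how the projection $\Pi_f Z \to X$ behaves under these identifications. By the compatibility in \eqref{eqn.pihoms}, postcomposing a map $\yon(a) \to \Pi_f Z$ with $\Pi_f g$ recovers precisely the structure map $\sigma \colon \yon(a) \to X$ that indexes the coproduct; under the Yoneda identification $\Hom_\Ah(\yon(a), X) \cong X_a$ this $\sigma$ is the corresponding element of $X_a$. Hence the component of $\Pi_f g$ at $a$ sends a pair $(\sigma, \gamma)$ to $\sigma$, as claimed.

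I expect the only delicate point to be the bookkeeping in the identification $\Delta_f \yon(a) \cong \Delta_\sigma Y$ as objects over $Y$: one must be careful that the symmetric pullback is equipped with its projection to $Y$, rather than to $\yon(a)$, when it is fed into $\Hom_{\Ah/Y}(-, Z)$. Everything else is a direct unwinding of the Yoneda lemma and the defining universal property of $\Pi_f$. (Alternatively, one could define $\Pi_f Z$ by the formula \eqref{eqn.presheafpi} and verify the adjunction $\Hom_{\Ah/X}(D, \Pi_f Z) \cong \Hom_{\Ah/Y}(\Delta_f D, Z)$ directly, thereby reproving local cartesian closedness of $\Ah$ along the way; but assuming $\Pi_f$ and computing it is the shorter route.)
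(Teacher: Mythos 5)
Your argument is correct, and its first half coincides exactly with the paper's: both set $W=\yon(a)$ (equivalently $D=\yon(a)$) in the universal property \eqref{eqn.pihoms} and invoke Yoneda to see that $\Pi_f Z$, if it exists, must be given by \eqref{eqn.presheafpi}; both then read off the component of $\Pi_f g$ as the indexing projection $(\sigma,\gamma)\mapsto\sigma$; and your care about regarding the pullback $\yon(a)\times_X Y$ as an object of $\Ah/Y$ via its projection to $Y$ is exactly the right bookkeeping. The divergence is in what happens next. You stop there, citing local cartesian closedness of $\Ah$ (which the paper does assert, without proof, in the sentence preceding the lemma) to guarantee that $\Pi_f$ exists in the first place. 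The paper instead treats \eqref{eqn.presheafpi} as a \emph{definition} and spends the second half of its proof verifying that this formula satisfies the universal property \eqref{eqn.presheafpiW} for an arbitrary presheaf $W$, by decomposing $W$ as a colimit of representables, commuting the pullback $-\times_X Y$ past that colimit, and using that contravariant $\Hom$ sends colimits to limits --- in effect proving the existence of $\Pi_f$ (hence local cartesian closedness) rather than quoting it. So your route is shorter and logically sufficient for the statement as phrased, at the cost of resting on an external standard fact; the paper's route is self-contained, which is presumably why it is written as a uniqueness-plus-existence argument. Your closing parenthetical correctly identifies this trade-off, so nothing is missing --- you have simply chosen the ``assume and compute'' branch where the paper chooses the ``define and verify'' branch.
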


\begin{proof}
The defining universal property of $\Pi_f$ in \eqref{eqn.pihoms} is that 
\begin{equation}\label{eqn.presheafpiW}
\Hom_{\Ah}\left(W,\Pi_f Z\right) \cong \coprod_{\sigma \colon W \to X} \Hom_{\Ah/Y}\left(\Delta_\sigma Y, Z\right),
\end{equation}
naturally with respect to the presheaf $W$. Letting $W = \yon(a)$ in \eqref{eqn.presheafpiW} shows that if $\Pi_f Z$ exists, it must have the form in \eqref{eqn.presheafpi}. It then only remains to deduce \eqref{eqn.presheafpiW} from the definition in \eqref{eqn.presheafpi}.

With $\Pi_f Z$ as defined in \eqref{eqn.presheafpi}, a map $W \to \Pi_f Z$ amounts to natural functions $W_a \to X_a \cong \Hom_\Ah\left(\yon(a), X\right)$ for each $a$ in $\A$, which we summarize by a map $\sigma \cong W \to X$, along with maps in $\Hom_{\Ah/Y}\left(\Delta_{\sigma \circ w} Y, Z\right)$ natural in $w \colon \yon(a) \to W$. The latter natural transformation can be written as an element of the set
\begin{align*}
  \lim_{w \colon \yon(a) \to W} \Hom_{\Ah/Y}\left(\Delta_{\sigma \circ w} Y, Z\right)&\cong 
  \Hom_{\Ah/Y}\left(\colim_{w \colon \yon(a) \to W}\left(\yon(a) \pb{\sigma \circ w}{f} Y\right), Z\right)\\&\cong 
  \Hom_{\Ah/Y}\left(\left(\colim_{w \colon \yon(a) \to W}\yon(a)\right) \pb{\sigma \circ w}{f}Y, Z\right)\\&\cong 
  \Hom_{\Ah/Y}\left(W \pb{\sigma}{f} Y, Z\right)\\&= 
  \Hom_{\Ah/Y}\left(\Delta_\sigma Y, Z\right),
\end{align*}
where the above isomorphisms arise from contravariant $\Hom$ functors sending colimits to limits, colimits commuting with products in one variable in the presheaf category $\Ah/X$, and the standard colimit decomposition of a presheaf. This completes the proof.
\end{proof}

While not all categories $\E$ with pullbacks allow for such convenient formulas for their exponentials, we will consider functors from $\E$ to a presheaf category $\Ah$ which preserve and reflect exponentials, so that we can work in $\Ah$ instead of $\E$.

In particular, recall that a functor $F \colon \A \to \E$ induces a limit-preserving functor $F^\ast \colon \E \to \Ah$, often called a \emph{singular functor}, defined on objects $E$ in $\E$ by 
\[
(F^\ast E)_a = \Hom_\E(F(a), E).
\]

\begin{definition}\label{def.dense}
A functor $F \colon \A \to \E$ is \emph{dense} if for all objects $E$ in $\E$, the natural map
\[
\colim\left(F/E \to \A \To{F} \E\right)\To{\cong}E
\]
is an isomorphism
\end{definition}

In particular, if the functor $F$ is dense then the induced functor $F^\ast \colon \E \to \Ah$ is fully faithful \cite[Lemma 1.7]{ulmer1968dense}. 

\begin{example}
The identity functor $\id_\E \colon \E \to \E$ is dense as $\id_\E/A \cong \E/A$ has a terminal object, and the corresponding fully faithful functor $\E \to \Eh$ is the Yoneda embedding.
\end{example}

While for any $\E$ we could always choose the identity as our dense functor, so long as we allow ourselves to consider sizes larger than that of $\E$ in our universe hierarchy, in many cases of interest there are more efficient choices of the category $\A$ and functor $F \colon \A \to \E$ that do not require $\Ah$ to be such a larger category than $\E$. 

\begin{example}
There is a dense functor from the simplex category $\Delta$, whose objects are finite nonempty ordinals and morphisms are monotone maps, to the category of small categories sending each ordinal to the corresponding poset category. The associated singular functor $\smcat \to \widehat\Delta$ is the notoriously fully faithful \emph{nerve} functor, sending a category $\CC$ to the simplicial set whose $n$-simplices are given by the strings of $n$ composable morphisms in $\CC$.
\end{example}

As mentioned above, the functor $F^\ast$ always preserves limits. When $F$ is fully faithful and dense, $F^\ast$ also in fact preserves and reflects exponentials.

\begin{proposition}\label{densepi}
For any fully faithful dense functor $F \colon \A \to \E$, the induced fully faithful functor $F^\ast \colon \E \to \Ah$ preserves and reflects exponentials.
\end{proposition}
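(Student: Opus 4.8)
The plan is to treat the two halves asymmetrically: once preservation of exponentials is established, reflection will follow formally. First observe that exponentiability of $F^\ast(f)$ requires no work, since $\Ah$ is locally cartesian closed, so every morphism there is exponentiable; and $F^\ast$, being a singular functor, preserves all limits and in particular pullbacks. Thus the entire content is the natural isomorphism $F^\ast(\Pi_f C) \cong \Pi_{F^\ast f}(F^\ast C)$ making \eqref{eqn.functorpi} commute. To produce it I would invoke the Yoneda lemma: to compare two presheaves it suffices to compare their values $(-)_a$ naturally in the object $a$ of $\A$, i.e.\ to exhibit a natural isomorphism $\left(F^\ast \Pi_f C\right)_a \cong \left(\Pi_{F^\ast f} F^\ast C\right)_a$.

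Next I would compute both sides at $a$ and match them. On the left, $\left(F^\ast \Pi_f C\right)_a = \Hom_\E\!\left(F(a), \Pi_f C\right)$, which by the defining universal property \eqref{eqn.pihoms} with $D = F(a)$ is $\coprod_{u\colon F(a)\to A}\Hom_{\E/B}\!\left(\Delta_f F(a), C\right)$. Since $F$ is dense it is fully faithful, hence so is $F^\ast$ (as noted above), and full faithfulness is inherited by the induced functor on slice categories $\E/B \to \Ah/F^\ast B$, giving $\Hom_{\E/B}(\Delta_f F(a), C) \cong \Hom_{\Ah/F^\ast B}(F^\ast \Delta_f F(a), F^\ast C)$. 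Because $F^\ast$ preserves pullbacks and $F^\ast F(a) \cong \yon(a)$ (again by full faithfulness of $F$), the object $F^\ast \Delta_f F(a)$ is the pullback $\Delta_\sigma(F^\ast B)$ of $F^\ast f$ along the map $\sigma\colon\yon(a)\to F^\ast A$ corresponding to $u$. Comparing with the explicit presheaf formula \eqref{eqn.presheafpi} of \cref{presheafpi}, applied to $F^\ast g\colon F^\ast C \to F^\ast B$ and $F^\ast f\colon F^\ast B \to F^\ast A$, then yields exactly $\left(\Pi_{F^\ast f} F^\ast C\right)_a$. All of these isomorphisms are natural in $a$, and tracking the index of the coproduct shows the result is compatible with the projections to $F^\ast A$, so it is an isomorphism over $F^\ast A$ and \eqref{eqn.functorpi} commutes.

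Reflection is then immediate. Given a diagram as in \eqref{eqn.pbaround} with $f$ exponentiable in $\E$, the object $\Pi_f C$ exists, and by the preservation just proved $F^\ast(\Pi_f C)\cong \Pi_{F^\ast f}(F^\ast C)$. If moreover $F^\ast(D)\cong\Pi_{F^\ast f}(F^\ast C)$, then $F^\ast(D)\cong F^\ast(\Pi_f C)$, and since a fully faithful functor reflects isomorphisms we conclude $D\cong\Pi_f C$.

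The main obstacle is the middle step: making the chain of isomorphisms genuinely natural in $a$, and verifying that full faithfulness of $F^\ast$ really does descend to the slice $\E/B \to \Ah/F^\ast B$ and correctly identifies $F^\ast\Delta_f F(a)$ with the pullback $\Delta_\sigma(F^\ast B)$ appearing in \eqref{eqn.presheafpi}. Everything else is bookkeeping of the two coproduct decompositions together with an application of Yoneda, but care is needed to ensure the matching of indices $u\colon F(a)\to A \leftrightarrow \sigma\colon\yon(a)\to F^\ast A$ is natural, so that the resulting isomorphism of presheaves respects the projection maps.
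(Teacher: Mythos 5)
Your proposal is correct. The preservation half is essentially the paper's argument: both evaluate the two candidate presheaves on representables, use the universal property \eqref{eqn.pihoms} at $D=F(a)$, transport the resulting coproduct of slice-Hom-sets across the fully faithful, pullback-preserving $F^\ast$ (identifying $F^\ast F(a)$ with $\yon(a)$ and $F^\ast\Delta_f F(a)$ with $\Delta_\sigma(F^\ast B)$), and conclude by Yoneda; your appeal to the explicit formula of \cref{presheafpi} rather than directly to the universal property of $\Pi_{F^\ast f}$ is an immaterial variation. Where you genuinely diverge is in the reflection half: the paper reruns the Hom-set computation naturally in an arbitrary test object $D$ of $\E$, establishing $\Hom_\E(D,E)\cong\Hom_\E(D,\Pi_f C)$ without ever invoking preservation, whereas you deduce reflection formally from preservation together with the fact that a fully faithful functor creates isomorphisms between objects (strictly, what you need is not ``reflects isomorphisms'' in the conservative-functor sense but that $F^\ast D\cong F^\ast(\Pi_f C)$ forces $D\cong\Pi_f C$, which full faithfulness does give). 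Your route is shorter, but the paper's buys something it explicitly remarks on afterward: its reflection argument depends only on $F^\ast$ being fully faithful and pullback-preserving, not on the particular definition of the singular functor, so it applies more broadly; your version inherits whatever hypotheses the preservation proof needs.
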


\begin{proof}
Given morphisms $f \colon B \to A$ and $C \to B$ in $\E$, recall (similarly to \eqref{eqn.presheafpiW}) that $\Pi_f C$ is defined by the universal property 
\[
\Hom_\E\left(D, \Pi_f C\right) \cong \coprod_{\sigma \colon D \to A} \Hom_{\E/B}\left(D \times_A B, C\right),
\]
naturally in the object $D$ of $\E$. By this and the definition of the functor $F^\ast$, we have that for each object $a$ in $\A$ there are isomorphisms
\begin{align}\label{eqn.pipreserve}
\Hom_\Ah\left(\yon(a), F^\ast \Pi_f C\right) &\cong 
\Hom_\E\left(F(a), \Pi_f C\right)\\\nonumber&\cong
\coprod_{\sigma \colon F(a) \to A} \Hom_{\E/B}\left(F(a) \times_A B, C\right).
\intertext{As $F^\ast$ is fully faithful and pullback-preserving, we continue}
	\nonumber&\cong
\coprod_{\sigma \colon F^\ast F(a) \to F^\ast A} \Hom_{\Ah/F^\ast B}\left(F^\ast F(a) \times_{F^\ast A} F^\ast B, F^\ast C\right)\\\nonumber&\cong 
\Hom_\Ah\left(F^\ast F(a), \Pi_{F^\ast f} F^\ast C\right)\\\nonumber&\cong \Hom_\Ah\left(\yon(a), \Pi_{F^\ast f} F^\ast C\right),
\end{align}
where $F^\ast F(a)$ is precisely the representable presheaf $\yon(a)$ in $\Ah$ as $F$ is fully faithful. By the Yoneda lemma then, $F^\ast \Pi_f C$ and $\Pi_{F^\ast f} F^\ast C$ are isomorphic and so $F^\ast$ preserves exponentials.

To see that $F^\ast$ reflects exponentials, let $E$ be an object of $\E$ such that $F^\ast E \cong \Pi_{F^\ast f} F^\ast C$. We then have isomorphisms natural in $D$, 
\begin{align*}
\Hom_\E\left(D,E\right) \cong \Hom_\Ah\left(F^\ast D, F^\ast E\right) &\cong \coprod_{\sigma \colon F^\ast D \to F^\ast A} \Hom_{\Ah/F^\ast B}\left(F^\ast D \times_{F^\ast A} F^\ast B, F^\ast C\right),
\intertext{As $F^\ast$ is fully faithful and preserves pullbacks, we complete the proof that $E \cong \Pi_f C$:}
&\cong\coprod\limits_{\sigma \colon D \to A} \Hom_{\E/B}\left(D \times_A B, C\right)\cong\Hom_{\E}(D,\Pi_f C)
\end{align*}
\end{proof}

\begin{remark}
Note that the proof that $F^\ast$ reflects exponentials relies only on the fact that $F^\ast$ is fully faithful and pullback-preserving, while the proof that $F^\ast$ preserves exponentials relies on the particulars of the definition of $F^\ast$. However, $F^\ast$ is often a right adjoint (specifically when $\E$ has certain colimits), in which case its preservation of exponentials follows from being a fully faithful right adjoint.
\end{remark}

\chapter{Polynomials and Polynomial Functors}\label{chap.poly}

We now set out to define a monoidal category $\poly_\E$ for a category $\E$ with finite limits, whose objects are \emph{polynomials in $\E$}.

\begin{definition}\label{polynomial}
For $\E$ a category with pullbacks, a polynomial $p$ in $\E$ is an exponentiable morphism $P_\ast \To{p} P$ in $\E$. 
\end{definition}

\begin{notation}
Unlike in \cref{chap.pb} where exponentiable morphisms were denoted in the style of $f \colon B \to A$ to establish their basic properties, when regarded as polynomials we use the convention of denoting the codomain of $p$ by its capitalization $P$ and the domain by $P_\ast$. This is meant to reduce the number of letters in the namespace when working with many polynomials at once, and emphasize that the polynomial $p$ is the primary object of study while its domain and codomain as a morphism in $\E$ are subsidiary components of $p$. While the symbol $(-)_\ast$ can temporarily be regarded as merely syntax for the domain, in \cref{pstar} we define an operation $(-)_\ast$ on polynomials such that $P_\ast$ is the codomain of a polynomial $p_\ast$.
\end{notation}

The relationship between exponentiable morphisms $p$ and the classical notion of polynomials is based on the \emph{polynomial functor} associated to $p$ (see \cref{polynomialfunctor}), particularly in the case when $\E$ is the category of sets.

\begin{example}\label{setpoly}
In the category of sets, which is a topos and hence locally cartesian closed, every function $P_\ast \To{p} P$ is an exponentiable morphism. For an element $I \in P$, let $p[I]$ denote its preimage in $P_\ast$. The polynomial functor associated to $p$ is the endofunctor on $\smset$ sending a set $X$ to the set
\[
\sum_{I \in P} X^{p[I]},
\]
where $\sum$ denotes an indexed disjoint union and $X^{p[I]}$ is the set of functions from the set $p[I]$ to $X$. Accordingly, we sometimes denote $p$ by
\[
\sum_{I \in P} \yon^{p[I]}.
\]

This endofunctor bears resemblance to a classical polynomial function, as it has the form of a sum of powers of $X$. While absent from this notation, analogues of natural number coefficients in classical polynomials would arise by grouping the summands according to the cardinality of their exponent; for instance, $Y \times X^Z \cong \sum_{I \in Y} X^Z$.
\end{example}

We will sometimes describe the codomain $P$ as the \emph{positions} of a polynomial $p$ and the domain $P_\ast$ as the \emph{directions}, consistent with the terminology in \cite[Definition 2.1.1]{spivak2021functorial}. The objects $P$ and $P_\ast$ will also occasionally be referred to as the \emph{base} and \emph{total space} of $p$, respectively.

\begin{example}\label{presheafpoly}
Every morphism in a presheaf category $\Ah$ is also exponentiable. For $P_\ast \To{p} P$ any morphism in $\Ah$, we can similarly define its fibers $p[x]$ for each object $a$ in $\A$ and $x \in P_a$ as the pullback of $p$ along $x \colon \yon(a) \to P$. As the pullback functor $\Delta_p$ is a right adjoint and thus preserves pullbacks, we have
\[
P_\ast \cong \Delta_p P \cong \Delta_p \colim_{x \colon \yon(a) \to P} \yon(a) \cong \colim_{x \colon \yon(a) \to P} p[x].\qedhere
\]
\end{example}

\begin{example}\label{catpoly}
In the category $\smcat$ of small categories, an exponentiable morphism $P_\ast \To{p} P$ is a \emph{Conduch\'e functor}.\footnote{While the name ``Conduch\'e functor'' is in most common use, they were first defined by Giraud (see \cite{street2001powerful}).} A functor is Conduch\'e if for any composite morphism $f = f' \circ f''$ in $P$ and morphism $g$ in $P_\ast$ such that $p(g) = f$, there exists a factorization $g = g' \circ g''$ such that $p(g')=f'$, $p(g'')=f''$, and this factorization is unique up to ``morphisms of factorizations.'' A morphism of factorizations from $g'_1,g''_1$ to $g'_2,g''_2$ is a morphism $h$ in $P_\ast$ such that $g'_1 = g'_2 \circ h$ and $g''_2 = h \circ g''_1$. 

Conduch\'e functors can be thought of as the functors entirely determined by their fibers; we write $p[I]$ for the pullback of $p$ along the inclusion of a single object $I$ in $P$, and $p[f]$ for the pullback of $p$ along the inclusion of a single morphism $f$ in $P$. For $f \colon I \to J$ in $P$, the cospan $p[I] \to p[f] \from p[J]$ is the collage of a profunctor from $p[I]$ to $p[J]$, and the Conduch\'e condition is equivalent to the assertion that for $f = f' \circ f''$ as above, the profunctor associated to $p[f]$ is isomorphic to the composite of the profunctors $p[f']$ and $p[f'']$. Therefore, a Conduch\'e functor into $P$ can be equivalently regarded as a pseudofunctor $p[-] \colon P \to \prof$ to the pseudo-double category of categories, functors, profunctors, and maps of profunctors (see \cite{street2001powerful} for a more thorough account).\footnote{Here the ordinary category $P$ is by default regarded as a double category whose horizontal category is $P$ and whose vertical category is discrete.} 
\end{example}

The definition of a polynomial in \cref{polynomial} is slightly different from the polynomials described in \cite{kock2012polynomial,weber2015polynomials}, which we instead call \emph{typed polynomials}.

\begin{definition}\label{typedpoly}
A typed polynomial in the category $\E$ is a diagram of the form 
\[
B \from P_\ast \To{p} P \to A,
\] 
where the morphism $p$ is exponentiable. For fixed objects $A$ and $B$, this is also called a \emph{polynomial from $B$ to $A$}.
\end{definition}

In \cite{kock2012polynomial,weber2015polynomials}, a polynomial from $B$ to $A$ is regarded as a 1-cell in a double category or bicategory whose objects are those of $\E$. When the category $\E$ has a terminal object 1, typed polynomials from 1 to 1 are precisely the (untyped) polynomials of \cref{polynomial}.

We will instead describe a monoidal category $\poly_\E$ in which polynomials are the objects. While this is technically a weaker result than the analogous constructions in \cite{kock2012polynomial,weber2015polynomials}, neglecting the bicategory structure of typed polynomials and requiring (only for the monoidal structure) the category $\E$ to have all finite limits rather than merely pullbacks, we have found the language of monoidal categories to be particularly convenient for describing various structures on and applications of polynomials (for instance, in \cite{spivak2021functorial,shapiro2022duoidal,shapiro2022dynamic} and \cref{chap.structure,chap.comonoid} below). That said, this choice is purely aesthetic: all of the arguments we make in this chapter for the monoidal category of polynomials in a finitely complete category $\E$ apply equally well to the double category of polynomials in a category $\E$ with pullbacks. Moreover, in \cref{typedpolycompose} we recover Gambino-Kock's double category and Weber's bicategory of polynomials via bicomodules in our monoidal category $\poly_\E$.

While the construction presented here of $\poly_\E$ for a finitely complete category $\E$ fills a gap in the literature, as \cite{kock2012polynomial} restricts the category $\E$ to be locally cartesian closed and \cite{weber2015polynomials} includes only cartesian morphisms between polynomials (\cref{morphism}), the main contribution of this chapter is a novel technique for checking the axioms of this monoidal category. This approach reduces those axioms to checking the case when $\E$ is a presheaf category, where the result is known by \cite{kock2012polynomial}. While this approach does not significantly differ from that of \cite{kock2012polynomial}, both using the data of a fully faithful strong monoidal functor out of $\poly_\E$ to verify that $\poly_\E$ is in fact a monoidal (resp. bi-) category, it sets the stage for using the same techniques for proving results about $\poly_\E$ in subsequent sections where this reduction significantly simplifies the proofs.

\section{The category of polynomials}

Unlike morphisms in the arrow category or twisted arrow category which correspond to commuting squares and factorizations, morphisms in the category of polynomials represent forwards-pointing maps on positions and backwards-pointing maps on the directions, made precise using pullbacks. 

\begin{definition}\label{morphism}
Given polynomials $p$ and $q$ in a category $\E$ with pullbacks, a morphism of polynomials from $p$ to $q$ is an isomorphism class of diagrams as in \eqref{eqn.polymap}, 
\begin{equation}\label{eqn.polymap}
\begin{tikzcd}
P_\ast \dar[swap]{p} & P \times_Q Q_\ast \lar[swap]{\phi^\sharp} \rar \dar \ar[phantom]{dr}[pos=0]{\lrcorner} &[-10pt] Q_\ast \dar{q} \\
P & P \lar[equals] \ar[r, "\phi_1"'] & Q
\end{tikzcd}
\end{equation}
where the relevant isomorphisms are between choices of the pullback $P \times_Q Q_\ast$.

A morphism $\phi$ is called \emph{cartesian} when $\phi^\sharp$ is an isomorphism (hence the identity in some representative diagram), and \emph{vertical} when $\phi_1$ is the identity.
\end{definition}

Given a morphism $(\phi_1,\phi^\sharp)$ of polynomials, we will sometimes call $\phi_1$ the component on positions, and $\phi^\sharp$ the component on directions.

\begin{example}
Given polynomials $p$ and $q$ in $\smset$, a morphism $\phi \colon p \to q$ consists of a function $\phi_1 \colon P \to Q$ along with for each element $I \in P$ a function $q[\phi_1I] \to p[I]$. When presented in this manner these morphisms of polynomials do not require any isomorphism classes as the pullback $P \times_Q Q_\ast$ can be canonically chosen to be 
\[
\coprod_{I \in P} q[\phi_1I].
\]
Moreover, it is clear how to compose morphisms of this form, and the evident composition of the relevant functions does in fact agree with the general form of composing morphisms of polynomials defined below in \cref{tentativepoly}.
\end{example}

\begin{example}
Given polynomials $p$ and $q$ in $\smcat$, a morphism $\phi \colon p \to q$ amounts to a functor $\phi_1 \colon P \to Q$ along with for each object $I \in P$ a functor $\phi^\sharp_I \colon q[\phi_1I] \to p[I]$ and for each morphism $f \colon I \to J$ in $P$ a square
\[
\begin{tikzcd}
{q[\phi_1I]} \rar[tick,""{name=U, below}]{{q[\phi_1f]}} \dar[swap]{\phi^\sharp_I} & {q[\phi_1J]} \dar{\phi^\sharp_J} \\
{p[I]} \rar[tick,swap,""{name=V, above}]{{p[f]}} & {p[J]}
\arrow[Rightarrow,shorten=4,from=U,to=V]
\end{tikzcd}
\]
in the double category $\prof$. When regarding $p$ and $q$ as pseudofunctors to $\prof$ as in \cref{catpoly}, this is precisely the data of a colax triangle
\[
\begin{tikzcd}
P \ar{rr}{\phi_1} \ar{dr}[swap]{{p[-]}} & & Q \ar{dl}{{q[-]}} \ar[Rightarrow, shorten=28, shift left=6]{ll}[swap]{\phi^\sharp} \\ 
& \prof
\end{tikzcd}
\]
in the 2-category of pseudo-double categories, pseudo-double functors, and vertical transformations. From this perspective, composing these morphisms is entirely routine, and indeed composition of colax triangles over $\prof$ corresponds to the composition of morphisms of polynomials in \cref{tentativepoly}.
\end{example}

We now set about proving that these polynomials and their morphisms form a category. While it would be straightforward to do so directly using only the universal property of pullbacks, we instead demonstrate the technique of reducing questions about polynomials in $\E$ to questions about polynomials in a presheaf category $\Ah$ where the results of \cite{kock2012polynomial} apply. To do so, we introduce some terminology useful for transfering properties of one categorical structure to another. 

\begin{remark}
The approach we take here is precisely the technique used in \cite{kock2012polynomial} to establish the bicategory of polynomials in a locally cartesian closed category. While their technique could apply almost verbatim to this setting of polynomials in a category with finite limits as well, it is much more straightforward to instead rely on their result and transfer structure from a category of polynomials in a presheaf category $\Ah$, rather than all the way from a category of endofunctors on $\E$. However, the construction in \cite[2.7]{kock2012polynomial} of the natural transformation of polynomial functors associated to a vertical morphism of polynomials relies critically on the underlying category being locally cartesian closed. Our approach avoids this construction, and in \cref{sec.polynomialfunctors} we discuss a more general construction of this transformation which does not require local cartesian closure.
\end{remark}

\begin{definition}
A \emph{tentative category} consists of a collection of objects, a set of morphisms for each ordered pair of such objects, and choices of identity and composite morphisms resembling those of a category but without necessarily satisfying the unitality and associativity equations. 

A \emph{tentative functor} from one tentative category to another is a mapping from the objects and morphisms of the first to the objects and morphisms of the second which preserves sources, targets, identities, and compositions.
\end{definition}

\begin{example}
There is a forgetful functor from categories to tentative categories. We will refer to tentative categories in the image of this functor as \emph{established} categories. In practice, all of the tentative categories (or other tentative structures) in this paper will ultimately be established, but will make use of tentative functors to prove it, hence the need for a notion of functor that can be defined without assuming the domain and/or codomain are yet established categories.
\end{example}

Our motivating example of a tentative category is that of polynomials and morphisms between them in a category $\E$ with pullbacks. We will ultimately prove it to be a category, but not before defining a tentative functor out of it.

\begin{definition}\label{tentativepoly}
For $\E$ a category with pullbacks, let $\poly_\E$ be the tentative category which has
\begin{itemize}
	\item as objects, polynomials in $\E$;
	\item as morphisms, morphisms of polynomials in $\E$;
	\item as the identity morphism on a polynomial $P_\ast \To{p} P$, the isomorphism class $\id_p$ represented by the diagram in \eqref{eqn.polyid}; and
\begin{equation}\label{eqn.polyid}
\begin{tikzcd}
P_\ast \dar[swap]{p} & P_\ast \lar[equals] \rar[equals] \dar \ar[phantom]{dr}[pos=0]{\lrcorner} & P_\ast \dar{p} \\
P & P \lar[equals] \rar[equals] & P
\end{tikzcd}
\end{equation}
	\item as the composite of morphisms represented by $(\phi_1,\phi^\sharp)$ and $(\psi_1,\psi^\sharp)$ as in \eqref{eqn.polymapadj},
\begin{equation}\label{eqn.polymapadj}
\begin{tikzcd}
P_\ast \dar[swap]{p} & P \times_Q Q_\ast \lar[swap]{\phi^\sharp} \rar \dar \ar[phantom]{dr}[pos=0]{\lrcorner} &[-10pt] Q_\ast \dar{q} & Q \times_R R_\ast \lar[swap]{\psi^\sharp} \rar \dar \ar[phantom]{dr}[pos=0]{\lrcorner} &[-10pt] R_\ast \dar{r} \\
P & P \lar[equals] \ar[r, "\phi_1"'] & Q & Q \lar[equals] \ar[r, "\psi_1"'] & R
\end{tikzcd}
\end{equation}
the morphism $\psi \circ \phi$ depicted in \eqref{eqn.polymapcomp},
\begin{equation}\label{eqn.polymapcomp}
\begin{tikzcd}[column sep=40pt]
P_\ast \dar[swap]{p} &[15pt] P \times_R R_\ast \lar[swap]{\phi^\sharp \circ (P \times_Q \psi^\sharp)} \rar \dar \ar[phantom]{dr}[pos=0]{\lrcorner} &[-10pt] R_\ast \dar{r} \\
P & P \lar[equals] \ar[r, "\psi_1 \circ \phi_1"'] & R
\end{tikzcd}
\end{equation}
where $P \times_Q Q_\ast \From{P \times_Q \psi^\sharp} P \times_Q (Q \times_R R_\ast) \cong P \times_R R_\ast$ is the map induced on pullbacks by $\psi^\sharp$.
\end{itemize}
\end{definition}

Our proof that the tentative category $\poly_\E$ is in fact a category is based on the following meta-theorem, which says that a tentative structure (category or otherwise) equipped with a fully faithful (in an appropriate sense) tentative structure-preserving map to an established structure is itself established. While we will use the general idea of this meta-theorem repeatedly, for the purpose of this section it takes the following form.

\begin{proposition}\label{categoryestablish}
If a tentative category $\CC$ is equipped with a faithful tentative functor $G$ to an established category $\DD$, then $\CC$ is itself established as a category and $G$ is established as a functor.
\end{proposition}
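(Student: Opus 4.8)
The plan is to transfer the category axioms from the established category $\DD$ back to $\CC$ along $G$, using faithfulness as the mechanism. Since $\CC$ already comes equipped with objects, hom-sets, chosen identities, and a composition operation, the only things left to check are that this composition is unital and associative; everything else is built into the notion of tentative category. The guiding principle I would isolate first is the following: for any two morphisms $\alpha,\beta$ of $\CC$ sharing the same source and target, if $G\alpha = G\beta$ then $\alpha = \beta$, precisely because $G$ is faithful (injective on each hom-set $\Hom_\CC(c,c')$). Each axiom then reduces to producing an equation in $\DD$ and applying this principle once.

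For unitality, I would fix a morphism $f \colon a \To{} b$ in $\CC$ and observe that both $f \circ \id_a$ and $f$ lie in $\Hom_\CC(a,b)$. Applying $G$, and using that a tentative functor preserves composition and identities together with unitality in the genuine category $\DD$,
\[
G(f \circ \id_a) = Gf \circ G(\id_a) = Gf \circ \id_{Ga} = Gf,
\]
so faithfulness gives $f \circ \id_a = f$; the argument for $\id_b \circ f = f$ is identical. For associativity, I would fix composable $f \colon a \To{} b$, $g \colon b \To{} c$, $h \colon c \To{} d$ and note that $(h \circ g)\circ f$ and $h \circ (g\circ f)$ both lie in $\Hom_\CC(a,d)$. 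Since $G$ preserves composition,
\[
G\bigl((h\circ g)\circ f\bigr) = (Gh \circ Gg)\circ Gf = Gh \circ (Gg \circ Gf) = G\bigl(h\circ(g\circ f)\bigr),
\]
the middle equality being associativity in $\DD$, and faithfulness then yields $(h\circ g)\circ f = h\circ(g\circ f)$.

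This shows $\CC$ satisfies the unit and associativity laws, hence is established as a category. It then remains only to note that $G$ is established as a functor: a tentative functor whose source and target are both established categories, and which preserves sources, targets, identities, and composites, satisfies by definition exactly the functor axioms, so no additional verification is needed.

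The only real subtlety — and thus the step I would be most careful about — is the bookkeeping of sources and targets, ensuring that in each case the two morphisms being compared genuinely inhabit a common hom-set of $\CC$, which is what licenses the use of faithfulness rather than some weaker injectivity statement. Once that alignment is set up, every axiom collapses to a single application of the transfer principle, so there is no genuine difficulty beyond this bookkeeping.
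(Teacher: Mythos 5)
Your proposal is correct and follows essentially the same route as the paper: apply $G$, use preservation of identities and composites together with the unit and associativity laws in $\DD$, and conclude by faithfulness on each hom-set. The extra remark about verifying that the compared morphisms share a hom-set is a sound but minor piece of bookkeeping that the paper leaves implicit.
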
 

\begin{proof}
It is sufficient to show that composition in the tentative category $\CC$ is unital and associative. Let $f,g,h$ be adjacent morphisms in $\C$; we have by functoriality of $G$ and associativity in $\DD$ that 
\[
G((f \circ g) \circ h) = (G(f) \circ G(g)) \circ G(h) = G(f) \circ (G(g) \circ G(h)) = G(f \circ (g \circ h)),
\]
which by faithfulness of $G$ implies that $(f \circ g) \circ h = f \circ (g \circ h)$ and therefore composition in $\C$ is associative. Similarly we have
\[
G(\id \circ f) = G(\id) \circ G(f) = \id \circ G(f) = G(f),
\]
and the entirely analogous right unitality equation, completing the proof.
\end{proof}

We can now prove that $\poly_\E$ is in fact a category.

\begin{theorem}\label{polycategory}
For a category $\E$ with pullbacks, $\poly_\E$ is a category.
\end{theorem}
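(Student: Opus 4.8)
The plan is to invoke the meta-theorem \cref{categoryestablish}: I will exhibit a faithful tentative functor from $\poly_\E$ to an \emph{established} category, namely the category $\poly_\Ah$ of polynomials in a suitable presheaf category $\Ah$. The latter is known to be a genuine category by \cite{kock2012polynomial}, since every presheaf category is locally cartesian closed and Gambino--Kock's hom-categories of polynomials are categories. All the work then lies in producing this tentative functor and checking that it is faithful, exactly as foreshadowed in the preceding remark.

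First I would fix a fully faithful dense functor $F \colon \A \to \E$; in the worst case one may take $\A = \E$ and $F = \id_\E$, so that $F^\ast \colon \E \to \Eh$ is the Yoneda embedding, though more economical choices exist in practice. By \cref{densepi} the induced singular functor $F^\ast \colon \E \to \Ah$ is fully faithful, preserves pullbacks, and preserves and reflects exponentials. Because $F^\ast$ preserves exponentiable morphisms, it sends each polynomial $p \colon P_\ast \to P$ in $\E$ to a polynomial $F^\ast p \colon F^\ast P_\ast \to F^\ast P$ in $\Ah$; because $F^\ast$ preserves pullbacks, applying it to a representative diagram \eqref{eqn.polymap} of a morphism of polynomials yields a representative diagram of a morphism of polynomials in $\Ah$, and it carries isomorphic choices of the pullback $P \times_Q Q_\ast$ to isomorphic choices. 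Hence the assignment is well defined on the isomorphism classes of \cref{morphism}, giving the object and morphism parts of a candidate tentative functor $G \colon \poly_\E \to \poly_\Ah$.

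Next I would verify that $G$ is genuinely a tentative functor, i.e.\ that it preserves sources, targets, identities, and composites as specified in \cref{tentativepoly}. Preservation of sources and targets is immediate, and preservation of identities \eqref{eqn.polyid} follows since $F^\ast$ preserves the identity pullback square. The one point requiring care, and the main (if routine) obstacle, is preservation of composites \eqref{eqn.polymapcomp}: the composite is assembled from the map induced on pullbacks by $\psi^\sharp$ together with the canonical isomorphism $P \times_Q (Q \times_R R_\ast) \cong P \times_R R_\ast$, and one must check that $F^\ast$ carries this whole construction to the corresponding one in $\Ah$. This holds precisely because $F^\ast$ preserves pullbacks, and therefore also the maps and the canonical isomorphisms induced between them by the universal property.

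Finally, faithfulness of $G$ is inherited directly from faithfulness of $F^\ast$: a morphism of polynomials is determined by its two components $\phi_1$ and $\phi^\sharp$, so if $G$ identifies two such morphisms then $F^\ast$ identifies their respective components, whence those components already agree in $\E$ since $F^\ast$ is faithful. As $\poly_\Ah$ is an established category, \cref{categoryestablish} then yields that $\poly_\E$ is a category and that $G$ is established as a functor, completing the argument.
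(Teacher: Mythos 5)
Your proposal is correct and follows essentially the same route as the paper: both invoke \cref{categoryestablish} by constructing the faithful tentative functor $\poly_{F^\ast}\colon \poly_\E \to \poly_\Ah$ induced by the singular functor of a dense $F\colon\A\to\E$, with $\poly_\Ah$ established by \cite{kock2012polynomial}. Your extra care about composites and well-definedness on isomorphism classes is just a more explicit rendering of the paper's appeal to functoriality and pullback-preservation of $F^\ast$.
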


\begin{proof}
By the discussion in and preceding \cite[2.14]{kock2012polynomial}, this result holds for the case when $\E$ is a presheaf category, where in their notation $\poly_\E$ is the category $\poly_\E(1,1)$. Let $F \colon \A \to \E$ be any dense functor (which if necessary can always be taken to be the identity), and consider the associated singular functor $F^\ast \colon \E \to \Ah$ which is fully faithful and pullback preserving (see page~\pageref{def.dense}).

Using \cref{categoryestablish}, it suffices to construct a faithful tentative functor $\poly_{F^\ast}$ from the tentative category $\poly_\E$ to the established category $\poly_\Ah$. On objects, $\poly_{F^\ast}$ sends a polynomial $p$ in $\E$ to the polynomial $F^\ast(p)$ in $\Ah$. On morphisms, $\poly_{F^\ast}$ acts by applying $F^\ast$ to each arrow in the diagrams as in \eqref{eqn.polymap}, which produces a morphism as $F^\ast$ is a functor and preserves pullbacks. Also by functoriality and pullback-preservation of $F^\ast$, the mapping $\poly_{F^\ast}$ preserves identities and composites. Finally as the functor $F^\ast$ is fully faithful so is $\poly_{F^\ast}$, completing the proof.
\end{proof}

\begin{remark}
The category $\poly_\E$ is isomorphic to a full subcategory of the category of dependent lenses in $\E$ as in \cite[Example 3.5]{spivak2019generalized}, whose objects are all morphisms in $\E$ and whose morphisms are also diagrams as in \eqref{eqn.polymap}. While allowing such additional objects does not interfere with the formation of a category, the monoidal structure defined in the next section required the objects to be exponentiable morphisms in $\E$.
\end{remark}

Before moving on to the monoidal structure on $\poly_\E$, we give a description of certain limits in $\poly_\E$ which will be helpful later on.

\begin{proposition}\label{hasconnectedlimits}
For $\E$ a category with finite connected limits, $\poly_\E$ has limits of finite connected diagrams in which all morphisms are cartesian (\cref{morphism}). Moreover, for $\B$ a connected category, $q_{(-)} \colon \B \to \poly_\E$ a diagram with all morphisms cartesian, and $i_0$ any object in $B$, the limit $\lim_i q_i$ is isomorphic to the polynomial
\[
(Q_{i_0})_\ast \times_{Q_{i_0}} \lim_i Q_i \to \lim_i Q_i.
\]
\end{proposition}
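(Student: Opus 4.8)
The plan is to exploit the fact that a cartesian morphism is exactly a pullback square (\cref{morphism}): a cartesian $\phi\colon p\to q$ amounts to its position component $\phi_1\colon P\to Q$ together with an identification of $p$ with the reindexing $\Delta_{\phi_1}q$. In particular, taking position components is functorial, so a diagram $q_{(-)}\colon\B\to\poly_\E$ with all morphisms cartesian restricts to a diagram $Q_{(-)}\colon\B\to\E$, $i\mapsto Q_i$, of positions, and for each $\alpha\colon i\to j$ the square exhibiting $q_\alpha$ as cartesian says precisely that $q_i\cong\Delta_{(q_\alpha)_1}q_j$. Since $\B$ is finite and connected and $\E$ has finite connected limits, the limit $\lim_iQ_i$ exists in $\E$ with projections $\pi_i$ satisfying $(q_\alpha)_1\circ\pi_i=\pi_j$. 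I would then take as candidate limit the polynomial $L\coloneqq\Delta_{\pi_{i_0}}q_{i_0}$, whose underlying morphism is exactly $(Q_{i_0})_\ast\times_{Q_{i_0}}\lim_iQ_i\to\lim_iQ_i$; this is a genuine polynomial, i.e.\ exponentiable, by \cref{comppb}, since it is a pullback of the exponentiable $q_{i_0}$.

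First I would produce the limit cone. Using that $\Delta$ is a pseudofunctor together with $(q_\alpha)_1\circ\pi_i=\pi_j$, for each $\alpha\colon i\to j$ one obtains
\[
\Delta_{\pi_i}q_i\cong\Delta_{\pi_i}\Delta_{(q_\alpha)_1}q_j\cong\Delta_{(q_\alpha)_1\circ\pi_i}q_j=\Delta_{\pi_j}q_j,
\]
so that via these isomorphisms and the connectedness of $\B$ the object $\Delta_{\pi_i}q_i$ is canonically independent of $i$ and identified with $L$. Each such identification exhibits $L$ as $\Delta_{\pi_i}q_i$, hence provides a cartesian morphism $\lambda_i\colon L\to q_i$ with position component $\pi_i$. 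The cone equations $q_\alpha\circ\lambda_i=\lambda_j$ hold by the very construction of these canonical isomorphisms, which are assembled by pasting the cartesian square of $q_\alpha$ onto that of $\lambda_i$.

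Next I would verify the universal property against an arbitrary cone $(s,(\mu_i)_i)$ in $\poly_\E$, with the $\mu_i$ \emph{not} assumed cartesian. Applying the positions functor, the $\mu_{i,1}\colon S\to Q_i$ form a cone over $Q_{(-)}$, hence factor uniquely through a single $u\colon S\to\lim_iQ_i$ with $\pi_i\circ u=\mu_{i,1}$; this $u$ is the position component of the desired factorization $\nu\colon s\to L$. Its direction component is forced: one has a canonical isomorphism $S\times_{\lim_iQ_i}L_\ast\cong S\times_{Q_{i_0}}(Q_{i_0})_\ast$, and since $\lambda_{i_0}$ has invertible direction component, $\nu^\sharp$ must be the direction component $\mu_{i_0}^\sharp$ of $\mu_{i_0}$ transported across this isomorphism. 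It then remains to check $\lambda_i\circ\nu=\mu_i$ for all $i$; the case $i=i_0$ is immediate, and the general case follows by propagating along a zigzag from $i_0$ to $i$, using the cone equations $q_\alpha\circ\mu_a=\mu_b$ and the invertibility of the direction component of each cartesian $q_\alpha$ (which lets the relation be transported in either direction along $\alpha$).

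The step I expect to be the main obstacle is exactly this coherent identification of directions across the various legs: the content of the proposition is that the directions of the entire diagram are captured by those of the single polynomial $q_{i_0}$ reindexed over $\lim_iQ_i$, and making this precise requires that the isomorphisms $\Delta_{\pi_i}q_i\cong\Delta_{\pi_{i_0}}q_{i_0}$ assemble coherently. This is where connectedness of $\B$ is essential — it furnishes a zigzag between any two objects, and the pseudofunctor coherence of $\Delta$ ensures the transported direction data is well defined and compatible with the cone. Independence of the formula from the chosen basepoint $i_0$ is then automatic, either from these canonical isomorphisms directly or, more cheaply, from the uniqueness of limits.
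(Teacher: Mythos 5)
Your proposal is correct and follows essentially the same route as the paper's proof: take the limit of positions in $\E$, pull back $q_{i_0}$ along the projection, use a zigzag argument from connectedness of $\B$ to see the construction is independent of the basepoint, and reduce the universal property to the universal properties of $\lim_i Q_i$ and of pullbacks, with the direction component of a factorization forced by the invertibility of the direction components of cartesian morphisms. You spell out the cone legs and the transport of the compatibility condition along zigzags a bit more explicitly than the paper does, but the argument is the same.
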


\begin{proof}
First note that this morphism is exponentiable, as it is a pullback of $q_i$ (\cref{comppb}). To see that this construction is independent of the choice of $i_0$, choose any other object $i_1$ in the category $\B$ along with a finite zigzag of morphisms in $\B$ from $i_0$ to $i_1$, which is always possible as $\B$ is connected. By induction on the length of this zigzag, it suffices to assume that there is either a morphism $i_0 \to i_1$ or a morphism $i_0 \from i_1$ in $\B$. 

In the first case, as the morphisms in the diagram are cartesian we have that $(Q_{i_0})_\ast \cong (Q_{i_1})_\ast \times_{Q_{i_1}} Q_{i_0}$, and in the second case we similarly have $(Q_{i_1})_\ast \cong (Q_{i_0})_\ast \times_{Q_{i_0}} Q_{i_1}$. Either way, composition of pullback squares shows that
\[
(Q_{i_0})_\ast \times_{Q_{i_0}} \lim_i Q_i \cong (Q_{i_1})_\ast \times_{Q_{i_1}} \lim_i Q_i.
\]

Now to see that this is a limit in $\poly_\E$, consider a polynomial $p$ with a cone $\phi_i \colon p \to q_i$ over $q_{(-)}$. By the universal property of limits in the category $\E$, the morphisms $P \to Q_i$ induce a map $P \to \lim_i Q_i$, whose pullback along $\lim_i q_i$ is given by
\[
P \times_{\lim_i Q_i} \lim_i Q_i \times_{Q_{i_0}} (Q_{{i_0}})_\ast \cong P \times_{Q_{i_0}} (Q_{i_0})_\ast
\]
for any object $i_0$ in the category $\B$. We therefore have a morphism of polynomials $p \to \lim_i q_i$, where the component on directions is given by that of $\phi_{i_0}$. This definition is independent of the choice of object $i_0$ as the morphisms of polynomials $\phi_i$ commute with the cartesian morphisms in the diagram $q_{(-)}$, and is straightforwardly checked to be unique by the universal properties of the limit $\lim_i Q_i$ and its pullbacks in $\E$.
\end{proof}

\section{Composition of polynomials}\label{sec.composition}

Classical polynomials can be composed with one another as functions; one way to generalize this is by regarding them as polynomial functors, as we discuss in \cref{sec.polynomialfunctors}, but just as composition of classical polynomials admits a formula using the distributivity of products over sums, composition of polynomials in a category $\E$ can be defined using distributivity pullbacks.\footnote{This analogy indeed motivates the name ``distributivity square'' and is made precise by the fact that distributivity pullbacks describe how to swap the application order of functors of the form $\Sigma_f$ and $\Pi_g$ as in \cref{chap.pb}.}

\begin{definition}\label{tri}
For polynomials $p$ and $q$ in a category $\E$ with finite limits, the \emph{composition product} $p \trii q$ is given by the composite of the top row of morphisms in \eqref{eqn.comp}. 
\begin{equation}\label{eqn.comp}
\begin{tikzcd}
(\Pi_p (Q \times P_\ast) \times_P P_\ast) \times_Q Q_\ast \ar{dd} \ar{rr} \ar[phantom]{ddrr}[pos=0]{\lrcorner} &[-20pt] &[-10pt] \Pi_p (Q \times P_\ast) \times_P P_\ast \ar{rr} \dar \ar[phantom]{ddrr}[pos=0]{\distpb} &[-10pt] & \Pi_p (Q \times P_\ast) \ar{dd} \\
& & Q \times P_\ast \ar{dl} \ar{dr} \\
Q_\ast \rar{q} & Q & {} & P_\ast \rar{p} & P 
\end{tikzcd}
\end{equation}
Note that $p \trii q$ is exponentiable as a composite of pullbacks of exponentiable morphisms $p$ and $q$ (\cref{comppb}).
\end{definition}

\begin{definition}\label{linear}
The \emph{identity polynomial} $\yon$ in a category $\E$ with finite limits is given by the identity map on the terminal object $1$. 
\end{definition}

\begin{example}\label{scalarmult}
For any polynomial $q$ in $\E$ and object $A$, we write $Aq$ for the polynomial $A \times Q_\ast \To{\id_A \times q} A \times Q$, which is exponentiable as a pullback of $q$. By \cref{ex.iso_distributivity}, $Aq \cong A\yon \tri q$, as $\Pi_{\id_A}$ is isomorphic to the identity functor, where $A\yon$ is simply the identity morphism on $A$.
\end{example}

\begin{example}\label{presheafcomp}
Using the notation for polynomials in a presheaf category $\Ah$ from \cref{presheafpoly} and the formula for exponentials in \cref{presheafpi}, we can compute that the polynomial $p \tri q$ in $\Ah$ has as its component at the object $a$ in $\A$ the dependent projection function
\[
 \coprod_{x \in P_a} \coprod_{f \colon p[x] \to Q} \coprod_{y \in p[x]_a} q[f(y)]_a \to
 \coprod_{x \in P_a} \coprod_{f \colon p[x] \to Q}1.\qedhere
\]
\end{example}

\begin{example}\label{catcomp}
For polynomials $p,q$ in $\smcat$, their composite $p \tri q$ has as its base the category with objects pairs of the form
\[
(I \in \ob(P), J \colon p[I] \to Q)
\]
and morphisms $(I,J) \to (I',J')$ pairs of the form
\[
(f \colon I \to I', g \colon p[f] \to Q),
\]
where here $p[f]$ denotes the corresponding collage category and the restrictions of $g$ to $p[I]$ and $p[I']$ agree with $J$ and $J'$ respectively. Because we have $p[f' \circ f] \cong p[f'] \circ p[f]$ as profunctors for any morphism $f' \colon I' \to I''$ in $P$, it is straightforward to define composition of these morphisms.

The fiber of the composite at $(I,J)$ is then given by the total space of the polynomial corresponding to the pseudofunctor
\[
p[I] \To{J} Q \To{q} \prof,
\]
with the fibers on morphisms constructed similarly.
\end{example}

We now describe tentative monoidal categories and functors, to prove that $\yon$ and $\tri$ form a monoidal structure on the category $\poly_\E$ using the same technique as in \cref{polycategory}.

\begin{definition}
A \emph{tentative monoidal category} is a tentative category $\CC$ equipped with a distinguished object $I$ and a function $\otimes \colon \ob(\CC) \times \ob(\CC) \to \ob(\CC)$.

A \emph{tentative strong (resp.\ lax) monoidal functor} is a tentative functor $G \colon \CC \to \DD$ between tentative monoidal categories equipped with identitor and productor isomorphisms (resp.\ morphisms) in $\DD$ of the forms
\[
\iota \colon I_\DD \to G(I_\CC) \qqand \psi_{a,b} \colon G(a) \otimes G(b) \to G(a \otimes b)
\]
respectively, with the productor defined for each pair of objects $a,b$ in the tentative category $\CC$.
\end{definition}

As with tentative categories, monoidal categories are included among tentative monoidal categories and can be called \emph{established}.

\begin{example}
For $\E$ a category with finite limits, $(\poly_\E,\yon,\tri)$ is a tentative monoidal category. 
\end{example}

\begin{proposition}\label{monoidalestablish}
If a tentative monoidal category $\CC$ is equipped with a fully faithful tentative strong monoidal functor $G$ to an established monoidal category, $\DD$, then $\CC$ is established as a monoidal category and $G$ is established as a monoidal functor.
\end{proposition}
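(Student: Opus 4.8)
The plan is to transport the entire monoidal structure from $\DD$ to $\CC$ along the fully faithful functor $G$, exactly as in the proof of \cref{categoryestablish} but now at the level of the tensor, the associator, and the unitors rather than of composition alone. Since $G$ is in particular faithful, \cref{categoryestablish} already establishes $\CC$ as a category and $G$ as a functor; what remains is to (i) extend $\otimes$ from a function on objects to a bifunctor on $\CC$, (ii) produce an associator and left/right unitors, and (iii) verify their naturality together with the pentagon and triangle coherence axioms. The single tool driving all of this is that full faithfulness of $G$ gives, for every pair of objects of $\CC$, a bijection between morphisms in $\CC$ and morphisms in $\DD$ between their images; so any morphism of $\DD$ with source and target in the image of $G$ descends to a unique morphism of $\CC$, and any equation between such morphisms holding in $\DD$ holds after descent in $\CC$.

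First I would define $\otimes$ on morphisms. Given $f \colon a \to a'$ and $g \colon b \to b'$ in $\CC$, I set $f \otimes g$ to be the unique morphism $a \otimes b \to a' \otimes b'$ with
\[
G(f \otimes g) = \psi_{a',b'} \circ (G(f) \otimes G(g)) \circ \psi_{a,b}^{-1},
\]
which exists by full faithfulness and uses the established bifunctor $\otimes$ of $\DD$ together with the productor isomorphisms. That this makes $\otimes$ a bifunctor on $\CC$ follows because the right-hand side is manifestly functorial in the pair $(f,g)$ in $\DD$---the productor isomorphisms telescope away in a composite and send identities to identities---and faithfulness of $G$ transfers these equations back to $\CC$.

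Next I would supply the coherence isomorphisms. I define the associator $\alpha_{a,b,c}$ and the unitors $\lambda_a, \rho_a$ in $\CC$ to be the unique morphisms whose images under $G$ are the corresponding isomorphisms of $\DD$ conjugated by the appropriate composites of the productors $\psi$ and the identitor $\iota$---precisely the formulas forced by demanding that $(\iota,\psi)$ satisfy the coherence axioms of an established strong monoidal functor. Each is an isomorphism because its $G$-image is a composite of isomorphisms and fully faithful functors reflect isomorphisms; naturality of $\alpha$, $\lambda$, $\rho$ in $\CC$ follows from naturality of their $\DD$-counterparts and of the $\psi$'s, again by faithfulness. By construction $G$ then satisfies the strong monoidal coherence axioms, so $G$ is established as a strong monoidal functor.

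Finally, the pentagon and triangle identities for $\CC$ are equations between morphisms built from $\alpha$, $\lambda$, $\rho$, and $\otimes$; applying $G$ and unwinding the definitions above rewrites each side in terms of the $\DD$-associator, the $\DD$-unitors, and $\psi$, whereupon the two sides coincide by the pentagon and triangle identities in $\DD$ and the already-verified coherence of $(\iota,\psi)$. Faithfulness of $G$ then yields the identities in $\CC$, completing the proof that $\CC$ is monoidal. I expect the main obstacle to be exactly this last bookkeeping step: arranging the many productor isomorphisms so that the image under $G$ of, say, the pentagon in $\CC$ reduces cleanly to the pentagon in $\DD$. This is purely mechanical given faithfulness, but it is where the coherence of the monoidal functor structure does the real work and where one must check that the conjugating isomorphisms cancel in the correct order.
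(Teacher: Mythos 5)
Your proposal is correct and follows essentially the same route as the paper's proof: both invoke \cref{categoryestablish} to establish $\CC$ as a category, define $f \otimes g$, $\alpha$, $\lambda$, $\rho$ as the unique morphisms whose $G$-images are the corresponding $\DD$-morphisms conjugated by the productors and identitor, and then deduce naturality and the pentagon and triangle identities from faithfulness of $G$ and the coherence of $\DD$. The only addition is your explicit remark that fully faithful functors reflect isomorphisms, which the paper leaves implicit.
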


\begin{proof}
By \cref{categoryestablish}, $\CC$ is a category, so it remains only to define the monoidal stucture and show that $G$ is a monoidal functor. This requires defining the action of $\otimes$ on morphisms of $\CC$, the associator isomorphisms $\alpha$, and the unitor isomorphisms $\lambda,\rho$, and proving that these isomorphisms are natural and the triangle and pentagon equations hold in $\CC$, that the associativity and unitality equations hold for the tentative monoidal functor $G$, and that its productors assemble into a natural transformation.

For morphisms $f \colon a \to a'$ and $g \colon b \to b'$ in $\CC$, there is a morphism
\[
G(a \otimes b) \To{\psi_{a,b}^{-1}} G(a) \otimes G(b) \To{G(f) \otimes G(g)} G(a') \otimes G(b') \To{\psi_{a',b'}} G(a' \otimes b')
\]
in the monoidal category $\DD$. As $G$ is fully faithful, this morphism determines a unique morphism $f \otimes g \colon a \otimes b \to a' \otimes b'$ in $\CC$ making the productor $\rho$ a natural transformation.

For objects $a,b,c$ in $\CC$, there is a morphism
\begin{multline*}
G((a \otimes b) \otimes c) \To{\psi_{a \otimes b,c}^{-1}} G(a \otimes b) \otimes G(c)
\To{\psi_{a,b}^{-1} \otimes \id_{G(c)}} (G(a) \otimes G(b)) \otimes G(c)\\
\Too{\alpha} G(a) \otimes (G(b) \otimes G(c)) \To{\id_{G(a)} \otimes \psi_{b,c}} G(a) \otimes G(b \otimes c) \To{\psi_{a,b \otimes c}} G(a \otimes (b \otimes c))
\end{multline*}
in $\DD$. As $G$ is fully faithful, this determines a unique morphism $\alpha_{a,b,c} \colon (a \otimes b) \otimes c \to a \otimes (b \otimes c)$ in $\CC$ satisfying the associativity equation for $G$. The left and right unitors can be similarly defined as the unique morphisms in $\CC$ making the left and right unitality equations for $G$ hold.

Finally, as $G$ is functorial and faithful, and the triangle, pentagon, and naturality diagrams for $\alpha,\lambda,\rho$ commute in $\DD$, the analogous diagrams must also commute in $\CC$ as their images commute in $\DD$ and there is a unique morphism in $\CC$ sent by $G$ to the total composite of each diagram.
\end{proof}

\cref{monoidalestablish} reduces the task of proving that $(\poly_\E,\yon,\tri)$ is a monoidal category to defining a tentative strong monoidal functor to an established monoidal category, for which as usual we will use a singular functor.

\begin{theorem}\label{monoidalcat}
When $\E$ is a category with finite limits, $(\poly_\E,\yon,\tri)$ is a monoidal category.
\end{theorem}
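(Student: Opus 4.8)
The plan is to invoke \cref{monoidalestablish} with $\CC = (\poly_\E, \yon, \tri)$, exactly as \cref{polycategory} invoked \cref{categoryestablish}. Thus I need to exhibit a fully faithful tentative strong monoidal functor from $\poly_\E$ to an established monoidal category. The target will again be $\poly_\Ah$ for a presheaf category $\Ah$: choosing any dense functor $F \colon \A \to \E$ (the identity if necessary) and forming the singular functor $F^\ast \colon \E \to \Ah$, the induced tentative functor $\poly_{F^\ast} \colon \poly_\E \to \poly_\Ah$ is fully faithful by the argument in \cref{polycategory}, and $\poly_\Ah$ is an established monoidal category by the results of \cite{kock2012polynomial} applied to the locally cartesian closed category $\Ah$.

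It remains to equip $\poly_{F^\ast}$ with identitor and productor isomorphisms. For the identitor, since $F^\ast$ preserves all limits it preserves the terminal object, so $\poly_{F^\ast}(\yon_\E) = F^\ast(\id_1)$ is the identity on $1$ in $\Ah$, which is precisely $\yon_\Ah$; the identitor is the resulting canonical isomorphism. For the productor, I must produce an isomorphism $F^\ast(p) \tri F^\ast(q) \To{\cong} F^\ast(p \tri q)$ for every pair of polynomials $p,q$ in $\E$. The composition product in \eqref{eqn.comp} is assembled entirely from finite products, pullbacks, and one application of a right adjoint $\Pi_p$ (the distributivity pullback marked $\distpb$). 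Since $F^\ast$ preserves finite limits and, by \cref{densepi}, also preserves exponentials, applying $F^\ast$ to the diagram \eqref{eqn.comp} yields the corresponding diagram computing $F^\ast(p) \tri F^\ast(q)$ in $\Ah$, so the comparison map is an isomorphism. With these data in hand, \cref{monoidalestablish} establishes $(\poly_\E, \yon, \tri)$ as a monoidal category.

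The main obstacle is verifying that the productor is genuinely an isomorphism, i.e.\ that $F^\ast$ transports the construction of $\tri$ faithfully. The one nontrivial ingredient is preservation of the distributivity pullback: while preservation of ordinary pullbacks and finite products is immediate from $F^\ast$ preserving limits, preservation of $\Pi_p(Q \times P_\ast)$ together with its counit is exactly the content of $F^\ast$ preserving exponentials, supplied by \cref{densepi}. Because a distributivity pullback around $(p,g)$ is characterized by the universal property of $\Pi_p$ and its counit as in \eqref{eqn.pidist}, preservation of exponentials is precisely what guarantees that $F^\ast$ sends the distributivity pullback in $\E$ to the one computing the composite in $\Ah$, so that the productor isomorphism goes through and the hypotheses of \cref{monoidalestablish} are met.
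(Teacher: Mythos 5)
Your proposal is correct and follows essentially the same route as the paper: both exhibit the singular functor $F^\ast$ of a dense $F\colon\A\to\E$, note that $\poly_{F^\ast}$ is a fully faithful tentative strong monoidal functor to the established monoidal category $\poly_\Ah$ of \cite{kock2012polynomial} because $F^\ast$ preserves finite limits and (by \cref{densepi}) exponentials, and conclude via \cref{monoidalestablish}. Your extra paragraph justifying preservation of the distributivity pullback only makes explicit what the paper leaves implicit.
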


\begin{proof}
Let $F \colon \A \to \E$ be a fully faithful dense functor (which can always be taken to be the identity, if need be) and $F^\ast \colon \E \to \Ah$ its associated fully faithful singular functor. As $F^\ast$ preserves finite limits as well as exponentials by \cref{densepi}, the functor $\poly_{F^\ast}$ from \cref{polycategory} preserves both the polynomial $\yon$ and the composition operation $\tri$ up to isomorphism, the former defined using only the terminal object and the latter defined using only products, exponentials, and pullbacks. This makes $\poly_{F^\ast} \colon \poly_\E \to \poly_\Ah$ a tentative monoidal functor. Since $\poly_\Ah$ is an established monoidal category by \cite[2.16]{kock2012polynomial} (where it is instead denoted $\poly_\Ah(1,1)$), the result follows from \cref{monoidalestablish}.
\end{proof}

\section{Polynomial Functors}\label{sec.polynomialfunctors}

So far, our proofs have not made explicit use of the polynomial functor associated to a polynomial, as it was made standard in \cite{kock2012polynomial,weber2015polynomials}; however, we have done so implicitly, as our proofs have relied on the results of \cite{kock2012polynomial} which themselves rely on polynomial functors. Later in \cref{sec.coalgebras} we will need to make explicit use of polynomial functors, so we recall relevant aspects of theory here. 

\begin{definition}\label{polynomialfunctor}
For a polynomial $P_\ast \To{p} P$ in a category $\E$ with finite limits, its associated \emph{polynomial endofunctor} $\P(p) \colon \E \to \E$ is defined as the composite
\[
\E \cong \E/1 \To{\Delta_!} \E/P_\ast \To{\Pi_p} \E/P \To{\Sigma_!} \E/1 \cong \E,
\]
where $!$ refers to the unique map to the terminal object 1. In particular, $\Delta_! \colon \E \to \E/P_\ast$ is the functor $- \times P_\ast$ and $\Sigma_! \colon \E/P \to \E$ is the usual forgetful functor.
\end{definition}

Note that for each $p$, the functor $\P(p)$ preserves connected limits, being the composite of a right adjoint with a forgetful functor $\E/P \to \E$, both of which preserve connected limits.

By the universal properties defining the functors $\Pi_p$ and $\Delta_!$ (products), for an object $A$ in $\E$ the object $\P(p)(A)$ has the universal property that a morphism $B \to \P(p)(A)$ is uniquely determined by morphisms $B \to P$ and $B \times_P P_\ast \to A$.

\begin{example}
As discussed in \cref{setpoly}, polynomials in the category of sets induce polynomial functors of the form 
\[
X\mapsto\sum_{I \in P} X^{p[I]}
\]
for $p[I]$ the preimage of $I$ in $P_\ast$. To see this, given a set $X$ we have that the set $\Delta_!X = X \times P_\ast$ in $\smset/P_\ast$. By \cref{presheafpi}, $\Pi_p\Delta_!X$ is the set of pairs $(I,f \colon p[I] \to X \times P_\ast)$ where $\pi_2 \circ f$ is the inclusion $p[I] \to P_\ast$, meaning $f$ amounts to simply a function $p[I] \to X$. Finally applying $\Sigma_!$ forgets that this set maps to $P$, which can be interpreted as taking a $P$-indexed disjoint union of the sets $X^{p[I]}$.
\end{example}

\begin{example}
Observe that for a polynomial $p$ and an object $A$ in $\E$, $\P(p)(A)$ is the base of the polynomial $p \tri A\yon$. This means that in $\smcat$, following \cref{catcomp}, $\P(p)(A)$ has objects of the form $(I \in P, f \colon p[I] \to A)$ and morphisms of the form $(h \colon I \to J, H \colon p[h] \to A)$. 

This construction is in fact a generalization of the \emph{wreath product} of categories \cite[Section 4]{cisinski2011theta}. For a category $B$ (often the simplex category $\Delta$) equipped with a functor $\gamma$ to Segal's category $\Gamma$ (dual to the skeleton of finite sets and partial functions) and any category $C$, the wreath product is a category $B \wr C$ whose objects are of the form $(b \in \ob(B); c_1,...,c_{\gamma(b)} \in \ob(C))$ and whose morphisms are arrangements of arrows in $C$ with sources and targets determined by the reverse partial functions in $\Gamma$. 

By the inclusion $\Gamma \to \sspan$ of spans of functions whose forward component is monic, and the inclusion $\sspan \to \prof$ of profunctors between discrete categories, a functor $B \to \Gamma$ determines a polynomial whose corresponding polynomial endofunctor on $\smcat$ is precisely $B \wr -$. Polynomial composition similarly generalizes the wreath product for functors $B \to \sspan$ developed in \cite{shapiro2022thesis}.
\end{example}

This assignment also extends to morphisms, though using a slightly different construction than that of \cite{kock2012polynomial}.

\begin{lemma}\label{nattrans}
Given a morphism of polynomials $\phi \colon p \to q$ in a category $\E$ with finite limits, there is a corresponding natural transformation $\P(\phi) \colon \P(p) \to \P(q)$.
\end{lemma}

\begin{proof}
When $\phi$ is cartesian this transformation is constructed precisely as in \cite[2.1]{kock2012polynomial} for the case when $I=J=1$, but as mentioned previously the construction for vertical morphisms in \cite[2.7]{kock2012polynomial} does not apply here as the morphism $\phi^\sharp \colon Q_\ast \to P_\ast$ may not be exponentiable. However, this is not a failure of their result in this context but of their particular construction, which uses exponentiation along $\phi^\sharp$ (there denoted $w$) for efficiency rather than necessity. 

We can instead construct this transformation by the diagram in \eqref{eqn.verticaltransformation}, which uses only the adjunctions $\Delta_p \dashv \Pi_p$ and $\Delta_q \dashv \Pi_q$ along with pseudofunctoriality of $\Delta_{(-)}$, noting that as $\phi$ is assumed to be vertical we have that $P=Q$:
\begin{equation}\label{eqn.verticaltransformation}
\begin{tikzcd}
& & \E/P \ar{dr}[description]{\Delta_p} \ar[""{name=U, above}, shift left=1]{ddrr}[description]{\Delta_q} \ar[equals,""{name=S, below}, shift left=.5]{rr} & &[110pt] \E/P \rar{\Sigma_!} \ar[Rightarrow, shorten >=35, shorten <=5, shift right=20]{dd}[pos=.25]{\eta_q} & \E \\
& \E/P_\ast \ar{ur}{\Pi_p} \ar[equals,""{name=T, above}]{rr} & & \E/P_\ast \ar{dr}[swap,description]{\Delta_{\phi^\sharp}} \rar[phantom, pos=.1, shift left=1]{\cong} \dar[phantom, shift right=10]{\cong} & {} \\
\E \ar{ur}{\Delta_!} \ar[""{name=V, above}]{rrrr}[swap]{\Delta_!} & & & {} & \E/Q_\ast \ar{uu}[swap]{\Pi_q}
\arrow[Rightarrow,shorten=4,from=1-3,to=T,"\epsilon_p"]
\end{tikzcd}
\end{equation}

To link up with Gambino-Kock, suppose now that $\phi^\sharp$ happens to be exponentiable. Then $\Pi_q$ factors as $\Pi_q\cong\Pi_p\Pi_{\phi^\sharp}$ and the unit $\eta_q$ of the adjunction $\Delta_q \dashv \Pi_q$ factors as $\eta_q\cong(\Pi_p\eta_{\phi^\sharp}\Delta_p) \circ \eta_p$; regardless of exponentiability, $\Delta_q\cong\Delta_{\phi^\sharp}\Delta_p$. As by the triangle identities $(\Pi_p\epsilon_p) \circ (\eta_p\Pi_p) = \id_{\Pi_p}$, the transformation depicted here agrees with that of \cite[Paragraph 2.7]{kock2012polynomial}, where in their notation $J=I=1$, $A=P$, $B'=P_*$, $B=Q_*$, $f'=p$, $f=q$, and $w=\phi^\sharp$.
\end{proof}

Observe that the assignment $\P$ from \cref{nattrans} respects identities and composites up to isomorphism, by pseudofunctoriality of $\Sigma,\Delta,\Pi$ (which preserve identities up to isomorphism) and the verbatim argument of \cite[Proposition 1.12]{kock2012polynomial}.

\begin{lemma}
For $\E$ a category with finite limits, there is a natural isomorphism $\beta \colon \P(\yon) \cong \id_\E$, and for polynomials $p,q$ there is a natural isomorphism $\alpha_{p,q} \colon \P(p) \circ \P(q) \cong \P(p \trii q)$.
\end{lemma}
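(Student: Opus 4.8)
The plan is to prove both isomorphisms by comparing universal properties and then invoking the Yoneda lemma, using the characterization recorded just after \cref{polynomialfunctor}: a morphism $B \to \P(p)(A)$ is the same datum as a morphism $B \to P$ together with a morphism $B \times_P P_\ast \to A$, naturally in $B$. For $\beta$, the polynomial $\yon$ has $P = P_\ast = 1$, so a morphism $B \to \P(\yon)(A)$ consists of the unique map $B \to 1$ together with a map $B \times_1 1 \iso B \to A$. Hence $\Hom_\E(B, \P(\yon)(A)) \iso \Hom_\E(B, A)$ naturally in $B$, and Yoneda gives $\P(\yon)(A) \iso A$ naturally in $A$, which defines $\beta$.

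For $\alpha_{p,q}$ I would first unfold the nested universal property on the left. A morphism $B \to \P(p)(\P(q)(A))$ is a map $b \colon B \to P$ together with a map $B \times_P P_\ast \to \P(q)(A)$; the latter is in turn a map $g \colon B \times_P P_\ast \to Q$ together with a map $h \colon (B \times_P P_\ast) \times_Q Q_\ast \to A$. Thus maps into the left-hand side correspond naturally to triples $(b,g,h)$.

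Next I would read the base and total space of $p \tri q$ off \eqref{eqn.comp}: the base is $\Pi_p(Q \times P_\ast)$ and the total space is $(\Pi_p(Q \times P_\ast) \times_P P_\ast) \times_Q Q_\ast$. Writing $E$ for $\Pi_p(Q \times P_\ast)$, a map $B \to E$ unpacks via the universal property \eqref{eqn.pihoms} of $\Pi_p$ into a map $b \colon B \to P$ together with a map $\Delta_p B = B \times_P P_\ast \to Q \times P_\ast$ over $P_\ast$; since the $P_\ast$-component is forced, this is exactly a map $g \colon B \times_P P_\ast \to Q$. Pulling the total space back along $b$ gives first $B \times_E (E \times_P P_\ast) \iso B \times_P P_\ast$, and then a further pullback of $Q_\ast \to Q$ along the map $B \times_P P_\ast \to Q$ obtained by restricting the counit $\epsilon \colon E \times_P P_\ast \to Q \times P_\ast$ of \eqref{eqn.pidist} (followed by projection to $Q$). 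The naturality of this transpose identifies that restricted map with $g$, so the pullback is $(B \times_P P_\ast) \times_Q Q_\ast$ and the remaining datum of a map into $\P(p \tri q)(A)$ is precisely a map $h \colon (B \times_P P_\ast) \times_Q Q_\ast \to A$. Maps into the right-hand side therefore correspond to the same triples $(b,g,h)$, and Yoneda produces the natural isomorphism $\alpha_{p,q}$.

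The main obstacle will be the bookkeeping in this last step: verifying that the composite pullback defining the total space of $p \tri q$ simplifies correctly after restriction along $b$, and in particular that the map $B \times_P P_\ast \to Q$ along which $Q_\ast$ is pulled back is genuinely the $g$ extracted from the first factor and not some other map. This hinges on the fact, visible in \eqref{eqn.pidist} and \eqref{eqn.comp}, that the relevant map out of $E \times_P P_\ast$ is the counit of $\Delta_p \dashv \Pi_p$, whose transpose recovers $g$ exactly. Naturality in $A$ (needed to conclude that $\alpha_{p,q}$ is a natural isomorphism of functors) and naturality in $B$ (needed for Yoneda) are both immediate, since every correspondence above is natural in those variables, so no separate verification is required.
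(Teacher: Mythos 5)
Your argument is correct, but it is worth noting that the paper does not actually prove this lemma: it is stated without proof, with the surrounding text deferring to the pseudofunctoriality of $\Sigma,\Delta,\Pi$ and to the calculus of Gambino--Kock (where $\P(p)\circ\P(q)$ is rewritten as a string of $\Sigma/\Delta/\Pi$'s and massaged into the string for $\P(p\tri q)$ using Beck--Chevalley and distributivity isomorphisms). Your route is genuinely different and more self-contained: you characterize both sides by their universal properties --- using the observation, recorded in the paper just after \cref{polynomialfunctor}, that a map $B\to\P(p)(A)$ is a pair $(B\to P,\ B\times_P P_\ast\to A)$ --- and match the resulting triples $(b,g,h)$ before applying Yoneda. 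The one step you flag as delicate is indeed the crux, and you resolve it correctly: the map $\Pi_p(Q\times P_\ast)\times_P P_\ast\to Q\times P_\ast$ in \eqref{eqn.comp} is the counit of $\Delta_p\dashv\Pi_p$ (this is exactly what the distributivity pullback in \eqref{eqn.pidist} encodes), so its restriction along $\Delta_p b'$ is the adjoint transpose of $b'$, whose $Q$-component is the $g$ you extracted; hence the pullback of $Q_\ast$ really is $(B\times_P P_\ast)\times_Q Q_\ast$ formed along $g$. What your approach buys is a direct, element-free verification that needs no Beck--Chevalley machinery; what the paper's (implicit) approach buys is that the same isomorphisms come packaged with the coherence data needed later for the monoidal functor $\P$, which your pointwise Yoneda construction would still have to be checked to supply.
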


This information suffices to prove that $\P$ is a fully faithful monoidal functor, and could also be used via \cref{monoidalestablish} to construct the monoidal category structure on $\poly_\E$ as we have established that $\P$ is a tentative monoidal functor to $\StrFunn(\E,\E)$, which as we now discuss is fully faithful into the monoidal subcategory $\StrFunn(\E,\E)$ of \emph{strong} functors and natural transformations.

\begin{definition}
A \emph{strength} on an endofunctor $F$ on a finite product category $\E$ is a family of morphisms 
\[
\tau_{A,B} \colon A \times F(B) \to F(A \times B)
\]
natural in the objects $A,B$ in $\E$, such that $\tau_{1,B} = \id_{F(B)}$ and $\tau_{A \times A',B} = \tau_{A,A' \times B} \circ (\id_A \times \tau_{A',B})$ up to the coherence isomorphisms for products in $\E$.

A natural transformation $\psi \colon F \imp F'$ between functors equipped with strengths $\tau,\tau'$ respectively is \emph{strong} if for all objects $A,B$ in $\E$, 
\[
\tau'_{A,B} \circ (\id_A \times \psi_B) = \psi_{A \times B} \circ \tau_{A,B}.
\]
We write $\StrFunn(\E,\E)$ for the category of endofunctors on $\E$ equipped with a strength and strong natural transformations between them.
\end{definition}

Note that this definition specializes that of \cite[Definition 1.3]{kock2012polynomial} to the special case of $\E/I$ when $I$ is the terminal object. The following lemma is also described in \cite[Definition 1.3]{kock2012polynomial}, but in order to make this notion concrete we sketch a conceptual proof using composition of polynomials.

\begin{lemma}
For $\E$ a finite limit category, every polynomial endofunctor $\P(p)$ on $\E$ has a strength.
\end{lemma}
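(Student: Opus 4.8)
The plan is to realize the strength as the image under the functor $\P$ of a canonical morphism of polynomials. First recall from the preceding lemma that $\P(\yon)\cong\id_\E$ and $\P(s\trii t)\cong\P(s)\circ\P(t)$ naturally, and that $\P(A\yon)\cong A\times(-)$, since $A\yon\trii X\yon\cong(A\times X)\yon$ has base $A\times X$ (\cref{scalarmult}). Combining these identifications, for any polynomial $p$ we obtain natural isomorphisms
\[
\P(A\yon\trii p)\cong A\times\P(p)(-)\qqand\P(p\trii A\yon)\cong\P(p)(A\times-).
\]
Under these, a strength $\tau_{A,-}\colon A\times\P(p)(-)\to\P(p)(A\times-)$ is precisely a natural transformation $\P(A\yon\trii p)\Rightarrow\P(p\trii A\yon)$, which we will produce by constructing a morphism of polynomials $\sigma_A\colon A\yon\trii p\to p\trii A\yon$ and applying $\P$.

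To construct $\sigma_A$, recall that $\P(p)(A)$ is the base of $p\trii A\yon$ while $A\times P$ is the base of $A\yon\trii p$, and that by the universal property defining $\P(p)$ (\cref{polynomialfunctor}) a map into $\P(p)(A)$ is determined by a map to $P$ together with a map into $A$ from its pullback along $p$. On positions we therefore define $A\times P\to\P(p)(A)$ by the projection $A\times P\to P$ paired with the projection $(A\times P)\times_P P_\ast\cong A\times P_\ast\to A$; in $\smset$ this sends a position $(a,x)$ to $(x,\constt_a)$, where $\constt_a\colon p[x]\to A$ is constant at $a$. The total space of $p\trii A\yon$ is the pullback $\P(p)(A)\times_P P_\ast$ of $p$ along $\P(p)(A)\to P$, so pulling it back along this position map recovers $A\times P_\ast$, the total space of $A\yon\trii p$; the position map thus extends uniquely to a cartesian morphism $\sigma_A$, evidently natural in both $A$ and $p$.

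We then set $\tau_{A,B}\coloneqq\P(\sigma_A)_B$ under the identifications above. Naturality of $\tau_{A,B}$ in $B$ is immediate since $\P(\sigma_A)$ is a natural transformation, and naturality in $A$ follows from naturality of $\sigma_A$ in $A$. For the unit axiom, $\sigma_1$ is the identity of $p$ under $\yon\trii p\cong p\cong p\trii\yon$, so $\tau_{1,B}=\id$. For the associativity axiom, the required coherence reduces to the factorization of morphisms of polynomials
\[
(A\times A')\yon\trii p\cong A\yon\trii A'\yon\trii p\To{A\yon\trii\sigma_{A'}}A\yon\trii p\trii A'\yon\To{\sigma_A\trii A'\yon}p\trii A\yon\trii A'\yon\cong p\trii(A\times A')\yon,
\]
which upon applying $\P$ yields exactly $\tau_{A\times A',B}=\tau_{A,A'\times B}\circ(\id_A\times\tau_{A',B})$.

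The main obstacle is verifying this last coherence, namely that $\sigma_{A\times A'}$ coincides with the displayed composite of $\sigma_A$ and $\sigma_{A'}$. As with the monoidal axioms in \cref{monoidalcat}, this is most painlessly checked after applying a fully faithful singular functor $F^\ast\colon\E\to\Ah$, where both sides agree by the explicit descriptions of composition and of the maps $\sigma$ on positions; since $F^\ast$ is fully faithful and preserves finite limits and exponentials, the equality in $\Ah$ forces the equality in $\E$.
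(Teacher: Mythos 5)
Your proposal is correct and follows essentially the same route as the paper: both reduce the strength to a natural morphism of polynomials $A\yon\trii p\to p\trii A\yon$, identify $A\yon\trii p$ with $Ap$, and construct the (cartesian) morphism from the projections $A\times P\to P$ and $A\times P_\ast\to A$ via the universal property of $\P(p)(A)$ (equivalently, of the distributivity square). You additionally spell out the unit and associativity axioms and the reduction to a presheaf category, which the paper's proof leaves implicit.
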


\begin{proof}
It is straightforward to check from the definition that for any objects $A,B$ of $\E$, 
\[
A \times \P(p)(B) \cong \P(A\yon \tri p)(B) \qqand \P(p)(A \times B) \cong \P(p \tri A\yon)(B),
\]
so it suffices to provide a morphism of polynomials $A\yon \tri p \to p \tri A\yon$ natural in $A$. Recall from \cref{scalarmult} that $A\yon \tri p \cong Ap$, so that this morphism is cartesian and is induced using the universal property of distributivity squares by the projection maps $A \times P \to P$ and $(A \times P) \times_P P_\ast \cong A \times P_\ast \to A$.
\end{proof}

A strong natural transformation $\P(p) \to \P(q)$ is then a natural transformation commuting with the transformations $\P(A\yon \tri -) \to \P(- \tri A\yon)$ for all objects $A$ in $\E$.

\begin{theorem}\label{Pfunctor}
For $\E$ a category with finite limits, the assignment $p \mapsto \P(p)$ and $\phi \mapsto \P(\phi)$ determines a fully faithful monoidal functor $\P \colon \poly_\E \to \StrFunn(\E,\E)$.
\end{theorem}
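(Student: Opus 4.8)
The plan is to unpack the statement into three assertions and dispatch them in order: that $\P$ is a genuine functor \emph{into} $\StrFunn(\E,\E)$, that the comparison isomorphisms $\beta\colon\P(\yon)\cong\id_\E$ and $\alpha_{p,q}\colon\P(p)\circ\P(q)\cong\P(p\tri q)$ upgrade it to a monoidal functor, and that it is fully faithful. That $\P$ lands in endofunctors and natural transformations, and respects identities and composites, is already in hand from \cref{nattrans} and the observation following it; what remains for the first assertion is that each $\P(\phi)$ is a \emph{strong} transformation. First I would recall that the strength $\tau^p_{A,B}\colon A\times\P(p)(B)\to\P(p)(A\times B)$ is, up to the isomorphisms $\alpha$, exactly $\P$ applied to the canonical cartesian morphism $A\yon\tri p\to p\tri A\yon$ built in \cref{scalarmult}, and that this morphism is natural in $p$. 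The strong-naturality square for $\P(\phi)$ is then the image under $\P$ of the evident commuting square of polynomial morphisms relating $A\yon\tri p\to p\tri A\yon$ and $A\yon\tri q\to q\tri A\yon$ through $A\yon\tri\phi$ and $\phi\tri A\yon$, so it commutes by functoriality of $\P$ and naturality of $\alpha$.

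For the monoidal structure I would exploit that the target $(\StrFunn(\E,\E),\id_\E,\circ)$ is \emph{strict} monoidal, since composition of endofunctors is strictly associative and unital. The only coherence conditions are therefore the associativity hexagon and the two unit triangles relating $\alpha$, $\beta$, and the associator and unitors of $\poly_\E$ supplied by \cref{monoidalcat}, together with naturality of $\alpha_{p,q}$ in both variables. Each is an equality of natural transformations between polynomial endofunctors, and I would verify it objectwise using the universal property of \cref{polynomialfunctor}, namely that a map $B\to\P(p)(A)$ is precisely a pair of maps $B\to P$ and $B\times_P P_\ast\to A$: both sides send a generalized element to the same pair, since $\alpha$ and $\beta$ are the canonical comparison isomorphisms and the associator of $\poly_\E$ is assembled from the same distributivity data. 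Alternatively this reduces, by applying a fully faithful singular functor $F^\ast$ exactly as in the proof of \cref{monoidalcat}, to the presheaf case, where it follows from Gambino--Kock.

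The crux is full faithfulness, i.e.\ that the map $\poly_\E(p,q)\to\StrFunn(\E,\E)(\P(p),\P(q))$ induced by $\P$ is a bijection, and I would prove it by building an explicit inverse from the \emph{generic point}. By the universal property of $\P(p)(P_\ast)$, the pair $(\id_P,\id_{P_\ast})$ names a canonical map $\gamma_p\colon P\to\P(p)(P_\ast)$. Given a strong $\psi\colon\P(p)\to\P(q)$, the composite $\psi_{P_\ast}\circ\gamma_p\colon P\to\P(q)(P_\ast)$ corresponds, again by the universal property, to a map $\phi_1\colon P\to Q$ together with a map $\phi^\sharp\colon P\times_Q Q_\ast\to P_\ast$, that is, to a morphism of polynomials $\Theta(\psi)\colon p\to q$ in the sense of \cref{morphism}. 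That $\Theta\circ\P=\id$ is a direct computation: feeding $\gamma_p$ through the construction of $\P(\phi)$ in \cref{nattrans} returns exactly the position map $\phi_1$ and the direction map $\phi^\sharp$ of $\phi$.

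The hard part will be the reverse identity $\P\circ\Theta=\id$, i.e.\ showing that a strong $\psi$ is determined by its value on the generic point. The key is that the strength makes $\gamma_p$ \emph{strongly generic}: an arbitrary generalized element $b\colon B\to\P(p)(A)$, classified by $u\colon B\to P$ and $v\colon B\times_P P_\ast\to A$, factors through a reindexing of $\gamma_p$ along $u$ by means of the strength $\tau^p$ and the functoriality of $\P(p)$. Both $\psi$ and $\P(\Theta(\psi))$ respect this factorization --- $\psi$ because it is strong and natural, $\P(\Theta(\psi))$ by construction --- so their agreement on $\gamma_p$ forces their agreement on every $b$, whence $\psi=\P(\Theta(\psi))$. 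Making this factorization precise, and confirming that the strong-naturality axiom is exactly what is needed to transport the generic point to an arbitrary base $B$ (ordinary naturality being insufficient precisely because $\P(p)$ need not be a coproduct of representables when $\E$ lacks coproducts), is the main obstacle; I expect to phrase it as an enriched-Yoneda argument, with the strength encoding the self-enrichment of $\E$ under which $\P(p)$ becomes an enriched functor and $\psi$ an enriched transformation.
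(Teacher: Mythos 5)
Your proposal takes a genuinely different route from the paper. The paper's proof is a two-line reduction: by the proof of \cref{monoidalcat} there is a fully faithful monoidal functor $\poly_\E\to\poly_\Ah\to\StrFunn(\Ah,\Ah)$ for a fully faithful dense $F\colon\A\to\E$, citing Gambino--Kock's Theorem 2.17 for the presheaf case; then, since $F^\ast$ preserves and reflects finite limits and exponentials (\cref{densepi}), $\P(F^\ast(p))$ evaluated on objects $F^\ast(A)$ returns $F^\ast(\P(p)(A))$, and the fully faithful monoidal functor into $\StrFunn(\Ah,\Ah)$ is transported back to one into $\StrFunn(\E,\E)$. You instead propose a direct representation-theorem argument in $\E$ itself, via the generic point $\gamma_p\colon P\to\P(p)(P_\ast)$ classified by $(\id_P,\id_{P_\ast})$. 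This is essentially the argument Gambino--Kock use \emph{inside} the presheaf case, so what you buy is a self-contained proof that does not route through density; what you lose is that the hard step must now be carried out in a bare finite-limit category rather than a locally cartesian closed one.

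That hard step is where I see a genuine gap. First, your closing framing --- ``an enriched-Yoneda argument, with the strength encoding the self-enrichment of $\E$'' --- is not available here: $\E$ is only assumed to have finite limits, not to be cartesian closed, so there is no self-enrichment to invoke. Second, the factorization of an arbitrary $b\colon B\to\P(p)(A)$ through the generic point needs more care than ``reindexing $\gamma_p$ along $u$ by means of $\tau^p$'': the classifying data $v$ lives on $B\times_P P_\ast$, not on $B\times P_\ast$, so the strength alone does not produce the relevant relative generic point $B\to\P(p)(B\times_P P_\ast)$. One genuinely needs that $\P(p)$ and $\P(q)$ preserve pullbacks (noted after \cref{polynomialfunctor}) to identify $\P(p)(B\times_P P_\ast)$ with $\P(p)(B)\times_{\P(p)(P)}\P(p)(P_\ast)$, express the relative generic point as a pair built from $\tau^p_{B,1}$ and $\gamma_p\circ u$, and then use ordinary naturality of $\psi$ on the projections together with the strong-naturality square for $\tau_{B,1}$ to conclude that $\psi$ is determined by $\psi_{P_\ast}\circ\gamma_p$. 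This can be made to work, but it is the entire content of the theorem and is exactly the step your write-up defers; as it stands the proposal establishes $\Theta\circ\P=\id$ but only gestures at $\P\circ\Theta=\id$, and does so with a tool (self-enrichment) that the hypotheses do not supply. The paper avoids all of this by outsourcing the representation theorem to the presheaf case and pulling it back along $F^\ast$.
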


\begin{proof}
By the proof of \cref{monoidalcat} and \cite[Theorem 2.17]{kock2012polynomial} there is a fully faithful monoidal functor $\poly_\E \to \poly_\Ah \to \StrFunn(\Ah,\Ah)$ for any fully faithful dense functor $F \colon \A \to \E$, and it agrees with our definition of $\P$ when $E$ is taken to be a presheaf category $\Ah$. 

By \cref{densepi}, the singular functor $F^\ast \colon \E \to \Ah$ preserves and reflects finite limits and exponentials, so when $\P(F^\ast(p))$ is applied to an object of the form $F^\ast(A)$ it produces precisely $F^\ast(\P(p)(A))$. Therefore, this faithful monoidal functor induces another of the form $\P \colon \poly_\E \to \StrFunn(\E,\E)$. 
\end{proof}

\section{Functors between categories of polynomials}

We have thus far made extensive use of the fully faithful monoidal functor $\poly_{F^\ast} \colon \poly_\E \to \poly_\Ah$ induced by a dense functor $\Ah \to \E$ into a category $\E$ with finite limits. We conclude this chapter by summarizing how more general functors between categories with finite limits induce (colax) monoidal functors between their categories of polynomials.

\begin{proposition}\label{polyfun}
Let $\E$ and $\E'$ denote categories with finite limits. A functor $G \colon \E \to \E'$ which preserves pullbacks and exponentiable morphisms induces a colax monoidal functor
\[
	\poly_G \colon \poly_\E \to \poly_{\E'}
\]
whose identitor and productor morphisms are cartesian. 

Furthermore $\poly_{(-)}$ forms a 2-functor from the 2-category of finite limit categories, pullback- and exponentiable morphism-preserving functors, and cartesian natural transformations to the 2-category of monoidal categories, colax monoidal functors, and monoidal natural transformations.
\end{proposition}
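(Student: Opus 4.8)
The plan is to define $\poly_G$ explicitly and then produce its colax structure from the universal properties behind $\tri$ and $\yon$. On objects, $\poly_G$ sends a polynomial $p$ to $G(p)$, which is again exponentiable since $G$ preserves exponentiable morphisms and hence is a polynomial in $\E'$; on morphisms it applies $G$ to each arrow in a representative diagram as in \eqref{eqn.polymap}, producing a valid such diagram because $G$ preserves pullbacks. Preservation of identities and composites is then immediate from functoriality and pullback-preservation of $G$, by the identical bookkeeping used for $\poly_{F^\ast}$ in the proof of \cref{polycategory}; thus $\poly_G$ is a functor.

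For the colax structure, I would first build the identitor. As $G$ preserves pullbacks but need not preserve the terminal object, there is a canonical map $G(1) \to 1$ in $\E'$, and this is exactly the positions component of a cartesian morphism of polynomials $\poly_G(\yon) = \id_{G(1)} \to \id_1 = \yon$, its directions component being the isomorphism $G(1) \cong G(1) \times_1 1$. The productor $\poly_G(p \tri q) \to \poly_G(p) \tri \poly_G(q)$ is assembled from two canonical comparisons: the colax product comparison $G(A \times B) \to G(A) \times G(B)$ induced by $G(1) \to 1$, and the mate $G \circ \Pi_p \Rightarrow \Pi_{G(p)} \circ G$ obtained by transposing, across the adjunctions $\Delta_p \dashv \Pi_p$ and $\Delta_{G(p)} \dashv \Pi_{G(p)}$, the canonical isomorphism $G \circ \Delta_p \cong \Delta_{G(p)} \circ G$ that expresses pullback-preservation (this mate is invertible precisely when $G$ preserves exponentials, which we do not assume). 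Applying both to the base $\Pi_p(Q \times P_\ast)$ of $p \tri q$ in \eqref{eqn.comp} yields the positions component of the productor. Its directions component is an isomorphism: since $G$ preserves the pullbacks that define the total space of $p \tri q$, composition of pullback squares identifies $G$ of that total space with the pullback of the total space of $\poly_G(p) \tri \poly_G(q)$ along the positions map, so the productor is cartesian as claimed.

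It then remains to check the colax monoidal axioms: naturality of the productor in $p$ and $q$, and its compatibility with the associators and unitors of $\tri$. Each of these reduces to the uniqueness clauses in the universal properties of products and of the distributivity pullbacks defining $\tri$ --- equivalently, to functoriality of the mate construction under pasting --- so that any two candidate comparisons into a given distributivity pullback agree, being maps into an object terminal among the relevant pullbacks. (Alternatively, one can transport the whole question along the fully faithful monoidal $\P$ of \cref{Pfunctor}, where the colax structure of $\poly_G$ corresponds to the natural transformation $G \circ \P(p) \Rightarrow \P(G(p)) \circ G$ built from the same comparisons.) I expect this coherence verification, and specifically the associativity pentagon relating $G((p \tri q) \tri r)$ to $\poly_G(p) \tri (\poly_G(q) \tri \poly_G(r))$, to be the main obstacle, since it forces one to reconcile iterated distributivity pullbacks under the mate $G \circ \Pi_p \Rightarrow \Pi_{G(p)} \circ G$ and thus demands the most delicate diagram bookkeeping; conceptually, though, commutativity is dictated by terminality.

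Finally, for the 2-functoriality statement, a cartesian natural transformation $\theta \colon G \Rightarrow H$ has each naturality square a pullback, so at a polynomial $p$ the square comparing $G(p)$ and $H(p)$ exhibits $G(P_\ast) \cong G(P) \times_{H(P)} H(P_\ast)$ --- precisely the data of a cartesian morphism of polynomials $\poly_G(p) \to \poly_H(p)$. These assemble into a natural transformation $\poly_\theta \colon \poly_G \Rightarrow \poly_H$, whose compatibility with the identitors and productors (hence monoidality) follows once more from the universal properties above. That $\poly_{(-)}$ preserves identity natural transformations, vertical and horizontal composites, and composition of functors is then routine from the corresponding facts for $\theta$, $H$, and $G$, completing the plan.
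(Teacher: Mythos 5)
Your proposal is correct and follows essentially the same route as the paper: identical definition of $\poly_G$ on objects and morphisms, the cartesian identitor from $G(1)\to 1$, the productor built from the product comparison $G(Q\times P_\ast)\to G(Q)\times G(P_\ast)$ together with the comparison $G\Pi_p\Rightarrow \Pi_{G(p)}G$ (which the paper phrases via terminality of distributivity pullbacks rather than as a mate, but these coincide), coherence deferred to the same universal properties, and the same treatment of cartesian natural transformations. The only cosmetic difference is your adjunction-mate language and the optional detour through $\P$, neither of which changes the argument.
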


\begin{proof}
On objects, $\poly_G$ sends a polynomial $p$ in $\E$ to the morphism $Gp$ in $\E'$, which is exponentiable by assumption. On morphisms, $\poly_G$ simply applies $G$ to each morphism in the diagram \eqref{eqn.polymap}, and the result is a morphism of polynomials since $G$ preserves pullbacks (and it is well defined because $G$ preserves isomorphism classes). 

To see that $\poly_G$ is colax monoidal, we define the colax identitor as the unique cartesian morphism $G(\id_1) = \id_{G(1)} \to \id_1$ in $\poly_{\E'}$. To define the productor, note that applying $G$ to the distributivity pullback around $Q \times P_\ast \to P_\ast \To{p} P$ as in \eqref{eqn.comp} results in a pullback around $G(Q \times P_\ast) \to G(P_\ast) \to G(P)$ and hence (by factorization through the canonical map $G(Q \times P_\ast) \to G(Q) \times G(P_\ast)$) around $G(Q) \times G(P_\ast) \to G(P_\ast) \to G(P)$. We therefore have a diagram as in \eqref{eqn.productor}, whose top row of pullback squares is the cartesian productor morphism $G(p \trii q) \to G(p) \trii G(q)$.
\begin{equation}\label{eqn.productor}
\begin{tikzcd}
\bullet \dar \ar{rr} \ar[phantom]{ddrr}[pos=0]{\lrcorner} & & \bullet \ar{rr} \dar \ar[phantom]{ddrr}[pos=0]{\lrcorner} & & G(\Pi_p (Q \times P_\ast)) \dar \\
\bullet \ar{dd} \ar{rr} \ar[phantom]{ddrr}[pos=0]{\lrcorner} & & \bullet \ar{rr} \dar \ar[phantom]{ddrr}[pos=0]{\distpb} & & \Pi_{G(p)} (G(Q) \times G(P_\ast)) \ar{dd} \\
& & G(Q) \times G(P_\ast) \ar{dl} \ar{dr} & & {} \\
G(Q_\ast) \rar{G(q)} & G(Q) & {} & G(P_\ast) \rar{G(p)} & G(P) 
\end{tikzcd}
\end{equation}
The coherence equations for colax monoidal functors follow straightforwardly from the universal property of distributivity pullbacks.

Finally, it is straightforward to check from the definition that $\poly_{(-)}$ preserves identities and composition of functors, and sends cartesian natural transformations to natural transformations whose components are cartesian morphisms. That these natural transformations $\poly_G \to \poly_{G'}$ are monoidal follows from the assumption that the natural transformations $G \to G'$ are cartesian and the fact that the productors constructed above are cartesian.
\end{proof}

Evidently from the proof of \cref{polyfun}, the identitor of the functor $\poly_G$ is an isomorphism when $G$ preserves terminal objects, and the compositor is an isomorphism when $G$ preserves exponentials. 

\begin{corollary}\label{polyisfunctor}
When a functor $G \colon \E \to \E'$ between categories with finite limits preserves finite limits and exponentials, the induced functor $\poly_G \colon \poly_\E \to \poly_{\E'}$ is monoidal. 
\end{corollary}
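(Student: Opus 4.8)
The plan is to obtain this statement as a direct consequence of \cref{polyfun} together with the observation recorded immediately above it. First I would check that the hypotheses of \cref{polyfun} are met. Preserving finite limits in particular entails preserving pullbacks, and by the definition of preserving exponentials, the assumption that $G$ preserves exponentials includes that $G$ sends every exponentiable morphism to an exponentiable morphism. Thus $G$ preserves pullbacks and exponentiable morphisms, so \cref{polyfun} applies and furnishes a colax monoidal functor $\poly_G \colon \poly_\E \to \poly_{\E'}$ whose identitor and productor morphisms are cartesian.

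Next I would invoke the standard fact that a colax monoidal functor is (strong) monoidal precisely when its identitor and productor morphisms are isomorphisms. The entire task therefore reduces to checking that these two cartesian structure morphisms are invertible under the present, stronger hypotheses on $G$.

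For the identitor $G(\id_1) \to \id_1$: since $G$ preserves finite limits it preserves the terminal object, so $G(1)$ is terminal in $\E'$ and $G(\id_1) = \id_{G(1)}$ is isomorphic to the identity polynomial $\yon$ in $\E'$. The cartesian identitor is then an isomorphism of identity polynomials. For the productor $G(p \trii q) \to G(p) \trii G(q)$: being cartesian, it is an isomorphism exactly when its component on positions is, and that component is the comparison between $G(\Pi_p(Q \times P_\ast))$ and $\Pi_{G(p)}(G(Q) \times G(P_\ast))$ appearing in the right-hand column of \eqref{eqn.productor}. Because $G$ preserves products (finite limits) and commutes with $\Pi$ along exponentiable morphisms (the content of preserving exponentials, via the commuting square \eqref{eqn.functorpi}), this comparison is an isomorphism.

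I do not expect a genuine obstacle here: the substantive work was carried out in \cref{polyfun}, where the colax structure was constructed and its maps shown to be cartesian. The only point requiring care is the bookkeeping that ``preserves finite limits and exponentials'' supplies exactly the two additional preservation properties -- preservation of the terminal object and commutation with $\Pi$ -- needed to upgrade the cartesian identitor and productor from morphisms to isomorphisms, and hence the colax functor $\poly_G$ to a monoidal one.
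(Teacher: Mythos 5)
Your proposal is correct and follows essentially the same route as the paper: the paper's justification is the remark immediately preceding the corollary, which observes that the identitor of $\poly_G$ from \cref{polyfun} is an isomorphism when $G$ preserves terminal objects and the productor is an isomorphism when $G$ preserves exponentials. Your additional detail---that a cartesian morphism of polynomials is invertible exactly when its position component is, and that the position component of the productor is the comparison $G(\Pi_p(Q\times P_\ast))\to\Pi_{G(p)}(G(Q)\times G(P_\ast))$---is a correct elaboration of what the paper leaves implicit.
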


\chapter{More Structure on $\poly_\E$}\label{chap.structure}

The category $\poly_\E$ for a category $\E$ with finite limits has a wide range of categorical structures, including additional monoidal products, duoidal structures, and monoidal (co)closures.

\section{Tensor product and duoidality}

While the more classical operations on polynomials are addition and multiplication which require the category $\E$ to have coproducts, the \emph{Dirichlet tensor product} of polynomials requires only products. 

\begin{definition}\label{tensordef}
Given polynomials $p$ and $q$ in a category $\E$ with finite limits, their tensor product $p \otimes q$ is given by their product
\[
P_\ast \times Q_\ast \to P \times Q
\]
in $\E$. The tensor product of morphisms is given by applying the product to each component of the diagram in \cref{morphism}.
\end{definition}

We write $p \otimes q$ for this product rather than $p \times q$ because $\otimes$ is not the product in the category $\poly_E$. 

\begin{example}
For polynomials $p,q$ in $\smset$, $p \otimes q$ is given by
\[
\sum_{(I,J) \in P \times Q} \yon^{p[I] \times q[J]},
\]
which resembles the Dirichlet product of classical polynomials.
\end{example}

The morphism $p \otimes q$ is always exponentiable by \cref{productpi} below, and the tensor product of morphisms of polynomials is again a morphism of polynomials as products commute with pullbacks.

\begin{lemma}\label{productpi}
In a category $\E$ with finite limits, if morphisms $p$ and $q$ are exponentiable, then their product written $p \otimes q$ is exponentiable. 
\end{lemma}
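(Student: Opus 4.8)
The plan is to realize $p \otimes q$ as a composite of two pullbacks of the exponentiable morphisms $p$ and $q$, and then appeal to \cref{comppb}, which asserts that exponentiable morphisms are closed under pullback and under composition. This route avoids constructing the right adjoint $\Pi_{p \otimes q}$ explicitly, which would otherwise be awkward without assuming local cartesian closure of $\E$.

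First I would factor the product morphism $p \otimes q = p \times q \colon P_\ast \times Q_\ast \to P \times Q$ through the intermediate object $P \times Q_\ast$ (available since $\E$ has finite products) as
\[
P_\ast \times Q_\ast \To{p \times \id_{Q_\ast}} P \times Q_\ast \To{\id_P \times q} P \times Q.
\]
Next I would identify each factor as a pullback of one of the given morphisms. The first factor $p \times \id_{Q_\ast}$ is the pullback of $p$ along the projection $\pi_P \colon P \times Q_\ast \to P$: there is a canonical isomorphism $P_\ast \times Q_\ast \cong P_\ast \times_P (P \times Q_\ast)$, under which the projection onto $P \times Q_\ast$ is precisely $p \times \id_{Q_\ast}$. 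Symmetrically, the second factor $\id_P \times q$ is the pullback of $q$ along the projection $\pi_Q \colon P \times Q \to Q$. By \cref{comppb} both factors are then exponentiable, and since exponentiable morphisms are closed under composition, their composite $p \otimes q$ is exponentiable.

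The only step requiring any care is the verification of the two pullback squares, which follows directly from the universal property of products; I do not expect a genuine obstacle here, since once the factorization is in place \cref{comppb} does all of the work. An alternative, should one prefer a self-contained argument, would be to exhibit $\Pi_{p \otimes q}$ via the composite of the two adjoints corresponding to these pullbacks, but the closure properties already packaged in \cref{comppb} make the factorization approach the cleanest.
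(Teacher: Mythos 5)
Your proof is correct and follows exactly the paper's argument: factor $p \otimes q$ as $(\id_P \times q) \circ (p \times \id_{Q_\ast})$ through $P \times Q_\ast$, recognize each factor as a pullback of $p$ or $q$ along a projection, and conclude by \cref{comppb}. The only difference is that you spell out the pullback-square verifications, which the paper leaves implicit.
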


\begin{proof}
The product morphism $p \otimes q$ factors as
\[
P_\ast \times Q_\ast \To{p \times \id_{Q_\ast}} P \times Q_\ast \To{\id_P \times q} P \times Q,
\]
where the factors are pullbacks of $p$ and $q$ respectively along projection morphisms in $\E$. This completes the proof as exponentiable morphisms are closed under composition and pullback by \cref{comppb}.
\end{proof}

To analyze the structure on $\poly_\E$ provided by $\otimes$, we first examine more generally how the functor $\poly$ of \cref{polyfun} from finite limit categories and pullback- and exponentiable morphism-preserving functors to monoidal categories and colax monoidal functors interacts with categorical products.

\begin{lemma}\label{polyismonoidal}
The functor $\poly_{(-)}$ is symmetric monoidal with respect to the cartesian monoidal structure on both its domain 
and codomain.
\end{lemma}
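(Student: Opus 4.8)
The plan is to reduce the claim to the single statement that the 2-functor $\poly_{(-)}$ preserves finite products. This suffices because a functor between 2-categories each equipped with its cartesian monoidal structure is canonically symmetric monoidal exactly when it preserves finite products: the structure isomorphisms are then supplied by the universal property of products, and all coherence and symmetry conditions follow automatically from that universal property. Concretely, for finite limit categories $\E$ and $\E'$ I would build a comparison functor $\poly_{\E \times \E'} \to \poly_\E \times \poly_{\E'}$ from the two projections $\pi \colon \E \times \E' \to \E$ and $\pi' \colon \E \times \E' \to \E'$. Both projections preserve finite limits and exponentiable morphisms, everything in a product category being computed componentwise, so by \cref{polyisfunctor} they induce strong monoidal functors $\poly_\pi$ and $\poly_{\pi'}$, which pair up into a monoidal functor $\langle \poly_\pi, \poly_{\pi'}\rangle$. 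The task is then to verify that this comparison, and the evident one at the terminal category, are isomorphisms of monoidal categories.

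The heart of the argument is that the whole construction of $\poly_{\E \times \E'}$ is componentwise. I would first observe that for a morphism $(f,g)$ in $\E \times \E'$, under the canonical isomorphism $(\E \times \E')/(A,A') \cong \E/A \times \E'/A'$ the pullback functor $\Delta_{(f,g)}$ is identified with $\Delta_f \times \Delta_g$; hence $\Delta_{(f,g)}$ admits a right adjoint exactly when both $\Delta_f$ and $\Delta_g$ do, and then $\Pi_{(f,g)} \cong \Pi_f \times \Pi_g$. Thus the exponentiable morphisms of $\E \times \E'$, equivalently the objects of $\poly_{\E \times \E'}$, are precisely the pairs of exponentiable morphisms, i.e.\ the objects of $\poly_\E \times \poly_{\E'}$. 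Since pullbacks and finite products in $\E \times \E'$ are likewise formed componentwise, a morphism of polynomials (an isomorphism class of diagrams as in \eqref{eqn.polymap}) in $\E \times \E'$ is exactly a pair of such morphisms, so the comparison is an isomorphism on objects and morphisms. It then remains to match the monoidal data: the identity polynomial $\yon$ in $\E \times \E'$ is the pair $(\yon,\yon)$ since the terminal object is $(1,1)$, and because the composition product $\tri$ of \cref{tri} is assembled entirely from products, pullbacks, and the functors $\Pi$, all componentwise, the distributivity pullback computing $(p,p') \tri (q,q')$ decomposes into the pair of distributivity pullbacks computing $p \tri q$ and $p' \tri q'$.

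Finally I would check the unit: the terminal finite limit category $\mathbf 1$ has a unique exponentiable morphism and a unique morphism of polynomials, so $\poly_{\mathbf 1} \cong \mathbf 1$ is the terminal (and hence unit) monoidal category, matching the cartesian unit of the codomain. Assembling these, $\poly_{(-)}$ preserves binary products and the terminal object, and is therefore symmetric monoidal for the cartesian structures, the symmetry being inherited from that of the product. I expect the main obstacle to be the verification that $\tri$ respects the product decomposition, namely that a distributivity pullback around $(f,g)$ in $\E \times \E'$ is exactly a pair of distributivity pullbacks. This reduces to the componentwise description of $\Pi$ above together with the terminality clause in the universal property of \eqref{eqn.pidist}, but it is the one place where one must argue with the distributivity (rather than an ordinary) pullback and confirm that its terminal universal property is preserved under the projections.
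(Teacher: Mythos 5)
Your proposal is correct and follows essentially the same route as the paper's proof: both arguments rest on the observation that pullbacks, exponentiability, distributivity pullbacks, and hence the entire construction of $\poly_{\E\times\E'}$ are computed componentwise, so that the comparison with $\poly_\E\times\poly_{\E'}$ is an isomorphism, with $\poly_{\mathbf 1}$ terminal and all coherence following from the universal property of products. The only cosmetic difference is that you build the comparison from the projections while the paper defines the productor by pairing, but since each is shown to be an isomorphism this is immaterial.
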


\begin{proof}
First we observe that for 1 the terminal category, $\poly_1$ is itself terminal. 

The productor functor $\poly_\E \times \poly_{\E'} \to \poly_{\E \times \E'}$ sends a pair of polynomials $p$ and $p'$ in $\E$ and $\E'$ respectively to the polynomial $(p,p') \colon (P_\ast,P'_\ast) \to (P,P')$ in $\E \times \E'$, and also acts by pairing on morphisms of polynomials. This assignment is well defined as pullbacks in $\E \times \E'$ are computed componentwise, and it is easily checked that distributivity pullbacks are as well, so a morphism $(p,p')$ in $\E \times \E$ is exponentiable if and only if $p$ and $p'$ are in $\E$ and $\E'$ respectively. Moreover, this functor is evidently an isomorphism as any polynomial (resp. morphism of polynomials) in $\E \times \E'$ projects to a polynomial (resp. morphism of polynomials) in both $\E$ and $\E'$, and monoidal as distributivity pullbacks are computed componentwise in $\E \times \E'$. 

These isomorphisms are easily observed to be natural, symmetric, associative, and unital, by basic unwinding of the definition and the principle that a diagram of tuples is the same as a tuple of diagrams.
\end{proof}

The Dirichlet tensor product $\otimes$ has the identity polynomial $\yon$ as its unit, and together they form another monoidal structure on the category $\poly_\E$, which is easy to prove using \cref{polyismonoidal} and the fact that finite products make every finite limit category a monoid in the category of finite limit categories. Furthermore, this argument also efficiently describes the relationship between the tensor product $\otimes$ and the composition product $\tri$.

\begin{theorem}\label{duoidal}
For $\E$ a category with finite limits, the tensor product of polynomials extends to a functor $\otimes \colon \poly_\E \times \poly_\E \to \poly_\E$ such that $(\poly_\E,\yon,\otimes)$ forms a monoidal category. Furthermore, $(\poly_\E,\yon,\otimes,\yon,\tri)$ forms a normal duoidal category. \end{theorem}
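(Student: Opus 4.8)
The plan is to obtain the entire duoidal structure by transporting, along the monoidal $2$-functor $\poly_{(-)}$ of \cref{polyfun,polyismonoidal}, the self-duality of $\E$ exhibited by its finite products, so that essentially no new verification about polynomials is needed.

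First I would observe that a finite limit category $\E$ is a (commutative) pseudomonoid in the $2$-category of finite limit categories, pullback- and exponentiable-morphism-preserving functors, and cartesian natural transformations, with multiplication the product functor $\times\colon\E\times\E\to\E$ and unit the functor $1\to\E$ selecting the terminal object. The only points requiring checking are that these two functors belong to this $2$-category: the terminal-object functor does so trivially, while $\times$ preserves pullbacks because limits in $\E\times\E$ are computed componentwise and products preserve limits, and $\times$ preserves exponentiable morphisms by \cref{productpi}; the associativity and unitality coherence cells are the evident cartesian isomorphisms. This is the precise content of the slogan that finite products make $\E$ a monoid in finite limit categories.

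Next, since $\poly_{(-)}$ is symmetric monoidal for the cartesian structures on both sides (\cref{polyismonoidal}), it carries pseudomonoids to pseudomonoids; hence $\poly_\E$ acquires a pseudomonoid structure in the $2$-category of ($\tri$-)monoidal categories, colax monoidal functors, and monoidal natural transformations. I would then identify this structure explicitly. Composing the transported multiplication $\poly_\times$ with the structure isomorphism $\poly_\E\times\poly_\E\cong\poly_{\E\times\E}$ of \cref{polyismonoidal} yields a colax monoidal functor $\poly_\E\times\poly_\E\to\poly_\E$ which on objects sends $(p,q)$ to the product morphism $P_\ast\times Q_\ast\to P\times Q$, that is, to $p\otimes q$ (\cref{tensordef}); likewise the transported unit $\poly_1\to\poly_\E$ selects the image of the unique polynomial in the terminal category $1$, namely $\id_1=\yon$. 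Thus the pseudomonoid multiplication is exactly $\otimes$, now exhibited as a functor, and its unit is exactly $\yon$.

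Being a pseudomonoid in a $2$-category, $(\poly_\E,\yon,\otimes)$ is in particular a monoidal category, its associator, unitors, and coherence equations being the transported pseudomonoid cells; this settles the first two assertions. For the last, I would invoke the standard identification---the colax dual of the Aguiar--Mahajan correspondence---of pseudomonoids in ($\tri$-)monoidal categories with colax monoidal functors and duoidal categories: the resulting duoidal structure has first product $(\otimes,\yon)$, second product $(\tri,\yon)$, interchange law given by the colaxator $(A\tri C)\otimes(B\tri D)\to(A\otimes B)\tri(C\otimes D)$ of $\poly_\times$, and unit comparison maps given by the remaining colax structure cells. Normality is then immediate, since both units are $\yon$ and the comparison morphisms $\yon\otimes\yon\to\yon$, $\yon\to\yon\tri\yon$, and $\yon\to\yon$ are all identities up to coherence isomorphism. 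The main obstacle is bookkeeping the variances: one must confirm that the colax---rather than lax---monoidal structure of $\poly_{(-)}$ delivers the interchange law in the direction matching the duoidal convention (first operation $\otimes$ distributing inside the second operation $\tri$), and that the pseudomonoid axioms transcribe exactly into the duoidal axioms; the remaining verifications are routine given \cref{productpi,polyismonoidal}.
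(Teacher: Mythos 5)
Your proposal is correct and follows essentially the same route as the paper: exhibit $\E$ with its finite products as a (pseudo)monoid in the cartesian monoidal $2$-category of finite limit categories and pullback- and exponentiable-morphism-preserving functors (using \cref{productpi}), push it forward along the symmetric monoidal functor $\poly_{(-)}$ of \cref{polyismonoidal} to get a pseudomonoid in monoidal categories and colax monoidal functors, and read off the duoidal structure. Your write-up is somewhat more explicit than the paper's (identifying the transported multiplication with $\otimes$ and unit with $\yon$, and spelling out normality), but the argument is the same.
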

\begin{proof}
The product functors $\times_n \colon \E^n \to \E$ for all $n \ge 0$ all preserve pullbacks as products commute with limits, and preserve exponentiable morphisms by \cref{productpi} and the fact that identity morphisms are exponentiable. Therefore, the category $\E$ with its finite product structure forms a monoid in the cartesian monoidal category of finite limit categories with pullback- and exponentiable morphism-preserving functors. By \cref{polyismonoidal}, the functor $\poly$ is symmetric monoidal and hence preserves monoids, so $\poly_\E$ is a monoid in the category of monoidal categories and colax monoidal functors. This makes $(\poly_\E,\yon,\otimes)$ a monoidal category and moreover a duoidal category with respect to the monoidal structure $(\yon,\tri)$.
\end{proof}

In particular there is an \emph{interchange} map of the form
\begin{equation}\label{interchange}
(p_1\tri p_2)\otimes(q_1\tri q_2)\to(p_1\otimes q_1)\tri(p_2\tri q_2).
\end{equation}

\begin{example}\label{setduoidal}
To see what the duoidal structure looks like for polynomials in $\smset$, observe that for such polynomials $p,q,r,s$ we have
\[
(p \tri q) \otimes (r \tri s) \cong \!\!\!
\sum_{\substack{I \in P \\ J \colon p[I] \to Q \\ K \in R \\ L \colon r[K] \to S}} \!\!\!
\yon^{\sum\limits_{\substack{i \in p[I] \\ k \in r[K]}} q[Ji] \times s[Lk]}
\textrm{and}\quad
(p \otimes r) \tri (q \otimes s) \cong \!\!\!\!\!\!\!\!\!\!\!\!
\sum_{\substack{I \in P \\ K \in R \\ M \colon p[I] \times r[K] \to Q \times S}} \!\!\!\!\!\!\!\!\!\!\!\!
\yon^{\sum\limits_{\substack{i \in p[I] \\ k \in r[K]}} q[M(i,k)] \times s[M(i,k)]}
\]
where the duoidal structure map is the cartesian morphism of polynomials sending $(I,J,K,L)$ to $(I,K,J \times L)$.
\end{example}

As both monoidal structures on $\poly_\E$ share a unit $\yon$ and the tensor product $\otimes$ is symmetric, $\poly_\E$ forms a \emph{physical} duoidal category in the sense of \cite[Definition 2.7]{shapiro2022duoidal}. Also by \cref{polyfun} as all of the colax structure maps involved in functors of the form $\poly_G$ are cartesian, the same is true for the duoidal structure maps in $\poly_\E$. These facts allow us to show that $\poly_\E$ furthermore has an additional $n$-ary operation $\boxtimes_n^N$ for every finite poset $N$ with $n$ elements, called a \emph{dependence structure} (\cite[Definitions 3.8,4.1]{shapiro2022duoidal}).

\begin{corollary}
For a finite limit category $\E$, $\poly_\E$ forms a dependence category derived from its physical duoidal category structure $(\yon,\otimes,\tri)$.
\end{corollary}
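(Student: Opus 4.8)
The strategy is to recognize the corollary as a direct application of the general construction of \cite{shapiro2022duoidal}, which produces a dependence category from any physical duoidal category whose duoidal structure maps are cartesian. All of the required input has already been assembled in the discussion immediately preceding the corollary, so the proof reduces to confirming that $\poly_\E$ meets the hypotheses of that construction and then invoking it. Concretely, I would first record the two facts: by \cref{duoidal} together with the symmetry of $\otimes$ and the shared unit $\yon$, the category $\poly_\E$ is a physical duoidal category in the sense of \cite[Definition 2.7]{shapiro2022duoidal}; and by \cref{polyfun}, since every colax structure map of a functor of the form $\poly_G$ is cartesian, so are all the duoidal coherence maps of $\poly_\E$, including the interchange map \eqref{interchange}. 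These are precisely the hypotheses required by \cite[Definitions 3.8, 4.1]{shapiro2022duoidal}.

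Next I would describe the operations being asserted to exist. For a finite poset $N$ on $n$ elements, the operation $\boxtimes_n^N \colon \poly_\E^n \to \poly_\E$ is built by induction on $N$ as an iterated combination of the two binary products, combining factors along the comparabilities of $N$ using one product ($\tri$) and across incomparable factors using the other ($\otimes$), with the interchange map \eqref{interchange} mediating whenever the two structures must be interleaved. Thus a chain recovers an iterated $\tri$ and an antichain an iterated $\otimes$, with general posets interpolating between them. The content of a dependence structure is then that these operations are functorial in maps of posets and compatible under composition, which amounts to a family of coherence equations among the associators, unitors, and interchange maps of the physical duoidal structure.

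The main obstacle is not any genuine difficulty but the bookkeeping of matching the specific coherence data demanded by \cite[Definition 4.1]{shapiro2022duoidal} against the cartesian duoidal structure already in hand, and in particular verifying that the cartesian-ness hypothesis is exactly what makes the comparison maps between the various $\boxtimes_n^N$ well defined and coherent. Should any of these coherence equations resist direct verification, I would fall back on the reduction technique used throughout this chapter: the singular functor $\poly_{F^\ast} \colon \poly_\E \to \poly_\Ah$ preserves both $\otimes$ (being finite-limit-preserving) and $\tri$ by \cref{densepi}, and is fully faithful, so it transports any dependence structure present on $\poly_\Ah$ back to $\poly_\E$ and the coherence equations need only be checked in the presheaf category. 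Once the dictionary between the two definitions is in place, the corollary follows immediately from the general theorem of \cite{shapiro2022duoidal} that a physical duoidal category with cartesian structure maps is a dependence category.
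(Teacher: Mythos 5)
There is a genuine gap: you have misidentified the hypotheses of the theorem from \cite{shapiro2022duoidal} that the corollary rests on. The result being invoked is \cite[Theorem 4.8]{shapiro2022duoidal}, whose hypothesis is not merely ``physical duoidal category with cartesian duoidal structure maps'' but rather that the category has finite connected limits which are preserved by both monoidal products. The reason is that for a general finite poset $N$ the operation $\boxtimes_n^N$ is \emph{not} an iterated combination of the binary products $\otimes$ and $\tri$ mediated by interchange maps, as your inductive description suggests; only series--parallel posets decompose that way. For arbitrary $N$ the operation is carved out as a (connected) limit of a diagram built from the duoidal structure maps, and one must know both that these limits exist and that $\otimes$ and $\tri$ preserve them in order for the operations to be well defined and to compose. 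Your proposal never addresses this condition, and it is exactly the substance of the paper's proof: the existence of finite connected limits of cartesian diagrams is \cref{hasconnectedlimits}, their preservation by $\tri$ is \cref{trilimit}, and their preservation by $\otimes$ follows because they are computed from limits in $\E$, which commute with products. The cartesianness of the duoidal structure maps (from \cref{polyfun}) enters only to justify restricting attention to diagrams of cartesian morphisms, for which the weaker conclusions of \cref{hasconnectedlimits} and \cref{trilimit} suffice; it is not by itself the sufficient condition.

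Your fallback via the singular functor also does not close this gap: full faithfulness of $\poly_{F^\ast}$ lets you reflect equations between morphisms, but it does not by itself transport the $n$-ary operations $\boxtimes_n^N$ from $\poly_\Ah$ back to $\poly_\E$, since there is no reason the relevant limits computed in $\poly_\Ah$ should lie in the essential image. The correct repair is to replace the coherence-chasing and the transfer argument with the verification of the connected-limit hypothesis as above.
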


\begin{proof}
By \cite[Theorem 4.8]{shapiro2022duoidal}, it suffices to show that the category $\poly_\E$ has finite connected limits which are preserved by the monoidal products $\otimes$ and $\tri$. In fact, by inspection of the proof of that theorem it further suffices to restrict our attention to limits of finite connected diagrams consisting only of cartesian morphisms of polynomials, as the duoidal structure maps used in defining the operations $\boxtimes_n^P$ are all cartesian. 

These limits exist in $\poly_\E$ by \cref{hasconnectedlimits}, and are preserved by $\tri$ by \cref{trilimit}. $\otimes$ preserves these limits in $\poly_\E$ as they are computed using limits in the category $\E$ which commute with products in $\E$. 
\end{proof}

\begin{example}
For polynomials $p_1,...,p_n$ in $\smset$ and $N$ a poset with elements $1,...,n$, the polynomial $\boxtimes_n^N(p_1,...,p_n)$ is defined similarly to those in \cref{setduoidal}, whose positions are tuples $(I_1 \in P_1,...,I_n \in P_n)$ where $I_i$ depends on directions from all those $p_j$ for which $j < i$ in the poset $N$. For instance, in $(p \tri q) \otimes (r \tri s)$, $q$ depends on $p$ and $s$ depends on $r$, while in $(p \otimes r) \tri (q \otimes s)$, $q$ and $s$ both depend on $p$ and $r$. See \cite[Example 4.10]{shapiro2022duoidal} for more details.
\end{example}

\section{Closure for $\otimes$}

The monoidal products $\otimes$ and $\tri$ have various closure structures, which contribute to a rich toolbox of categorical structures in the category $\poly_\E$. For instance, when the finite limit category $\E$ is cartesian closed the monoidal structure given by $\otimes$ is closed as well, as we now describe. Recall that in a cartesian closed category the map $! \colon A \to 1$ is always exponentiable, and the object $\Pi_! B$ is denoted $B^A$.

\begin{definition}
For a cartesian closed finite limit category $\E$, and polynomials $p,q$ in $\E$, the polynomial $[p,q]$ is given by the composite of the top row of morphisms in \eqref{eqn.closure}, 
\begin{equation}\label{eqn.closure}
\begin{tikzcd}
&[-30pt] &[-40pt] &[-30pt] Q_\ast \times_Q (q^p \times P) \ar{ddl} \ar{rrr} &[-30pt] &[-20pt] &[-30pt] q^p \times P \ar{dl} \rar &[20pt] q^p \ar{dd} \\
& & & & {} & \bullet \ar{dl} \ar{dr} & & {} \\
& & \bullet \ar{dl} \ar{rr} & & \bullet \ar{ddr} & {} & Q^P \times P \ar{ddl} \rar & Q^P \ar{ddd} \\
& Q_\ast \times P_\ast \ar{dl} \ar{dr} & & {} & & & & {} \\
P_\ast & & Q_\ast \times P \ar{dl} \ar{rrr} & & & Q \times P \ar{dl} \ar{dr} \\
& Q_\ast \ar{rrr} & & {} & Q & & P \rar & 1
\arrow[phantom, from=1-4, to=2-5, pos=.05, "\lrcorner"]
\arrow[phantom, from=1-7, to=2-8, pos=.05, "\distpb"]
\arrow[phantom, from=2-6, to=3-6, near start, "\vee"]
\arrow[phantom, from=3-3, to=4-4, pos=.05, "\distpb"]
\arrow[phantom, from=3-7, to=4-8, pos=.05, "\distpb"]
\arrow[phantom, from=5-3, to=6-4, pos=.05, "\lrcorner"]
\end{tikzcd}
\end{equation}
each of which is exponentiable as a pullback of an exponentiable morphism. Note that the three suitable pentagons in \eqref{eqn.closure} are distributivity pullbacks.
\end{definition}

While the full proof is far too tedious to write here, it can be checked by chasing through the diagram in \eqref{eqn.closure} that this construction of $[p,q]$ is covariantly functorial in $q$ and contravariantly functorial in $p$.\footnote{The interested reader should be cautioned that this fact is most easily checked separately in the four cases of cartesian morphisms into $p$, vertical morphisms into $p$, cartesian morphisms out of $q$, and vertical morphisms out of $q$, the first and last of which being the more involved.}

\begin{example}
In $\poly_{\smset}$, by \cite[Proposition 2.1.11]{spivak2021functorial} the closure polynomial has the form 
\[
[p,q] \cong \sum_{\phi \colon p \to q} \yon^{\sum_{I \in P} q[\phi_1I]} \cong \prod_{I \in P} \sum_{J \in Q} \prod_{j \in q[J]} \sum_{i \in p[I]} \yon.
\]
The form on the left, which is more intuitive as a polynomial whose positions are morphisms from $p$ to $q$, is often more convenient to use in practice, while the form on the right inspires the general construction of the closure in \eqref{eqn.closure} and motivates the choice of directions. In particular, the form on the right constructs using sums and products a polynomial whose positions will be morphisms from $p$ to $q$, and the directions are then determined by the effect of applying this construction to the unit $\yon$ of the monoidal structure. While this alone is sufficient to justify the choice of directions in the form on the left, it should also be noted that given a morphism $\phi \colon p \to q$, a position $I$ in $p$ and a direction $j$ in $q[\phi_1 I]$ determines both a position or $q$ (namely, $\phi_1I$) and a direction of $p$ (namely, $\phi^\sharp_I(j)$).
\end{example}

The diagram in \eqref{eqn.closure} used to construct the polynomial $[p,q]$ also provides a straightforward proof that $[-,-]$ is a closure for the monoidal structure $(\yon,\otimes)$.

\begin{theorem}\label{closure}
For $\E$ a cartesian closed finite limit category, the monoidal category $(\poly_\E,\yon,\otimes)$ has a closure given by $[-,-]$.
\end{theorem}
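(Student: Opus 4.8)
The plan is to establish, naturally in the polynomials $a$, $p$, and $q$, a bijection
\[
\Hom_{\poly_\E}(a \otimes p, q) \cong \Hom_{\poly_\E}(a, [p,q]),
\]
which exhibits $[p,-]$ as a right adjoint to $-\otimes p$ and hence makes $[-,-]$ a closure for $(\poly_\E,\yon,\otimes)$; by symmetry of $\otimes$ it suffices to treat this single variance. First I would unwind both sides in terms of the position/direction data of \cref{morphism}. A morphism $a\otimes p\to q$ consists of a map on positions $A\times P\to Q$ together with, over each position, a backward map out of the directions of $q$ into the directions $a[-]\times p[-]$ of $a\otimes p$ (from \cref{tensordef}); the latter splits into a component landing in the directions of $a$ and a component landing in the directions of $p$. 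A morphism $a\to[p,q]$, read through the more intuitive form $[p,q]\cong\sum_{\phi\colon p\to q}\yon^{\sum_{I\in P}q[\phi_1 I]}$, consists of a choice, for each position of $a$, of a morphism $\phi\colon p\to q$ together with a backward map from $\sum_{I\in P}q[\phi_1 I]$ into the directions of $a$.

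With both sides so described, the correspondence is forced: the $P$-indexed family of maps $p\to q$ assembles into the single position map $A\times P\to Q$; the direction data of each $\phi$ supplies exactly the $p$-component of the directions of $a\otimes p\to q$; and the remaining backward map into the directions of $a$ supplies the $a$-component. The content of the theorem is that the diagram \eqref{eqn.closure} is built so that this bookkeeping is exactly what its universal properties deliver: the lower-right distributivity pullbacks constructing $Q^P$ and then $q^p$ convert a map $A\to q^p$ into a position map $A\times P\to Q$ equipped with the fibrewise back-maps of a morphism $p\to q$, while the upper-left pullback $Q_\ast\times_Q(q^p\times P)$ records precisely the directions over which the remaining $a$-component is defined. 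Chasing a test object through the three distributivity pullbacks and two pullbacks of \eqref{eqn.closure}, using the defining universal property \eqref{eqn.pihoms} of each $\Pi$ and of the exponentials, yields the claimed bijection directly.

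To make this rigorous without grinding through the full diagram chase, I would invoke the reduction technique used throughout the chapter. Choosing a fully faithful dense functor $F\colon\A\to\E$ (possibly the identity), the induced $\poly_{F^\ast}\colon\poly_\E\to\poly_\Ah$ is fully faithful, and since $F^\ast$ preserves finite limits and exponentials by \cref{densepi}, it carries $a\otimes p$ to $\poly_{F^\ast}a\otimes\poly_{F^\ast}p$ and $[p,q]$ to $[\poly_{F^\ast}p,\poly_{F^\ast}q]$ up to isomorphism (both being assembled from products, pullbacks, and $\Pi$'s). It therefore suffices to verify the adjunction in $\poly_\Ah$, where the element-wise argument above applies object-by-object exactly as in \cite[Proposition 2.1.11]{spivak2021functorial}, and then transport it back along the fully faithful $\poly_{F^\ast}$; naturality in $a$, $p$, and $q$ follows since every isomorphism used is natural.

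The main obstacle is the direction bookkeeping rather than the position map: one must check that the single backward map out of the directions of $q$ splits coherently into an $a$-part and a $p$-part compatibly across all three distributivity pullbacks, and that this splitting is natural in all three arguments simultaneously while respecting the contravariance in $p$. This is precisely the step that is unpleasant to verify by hand in an arbitrary finite-limit $\E$, which is why I would run the verification in a presheaf category and let full faithfulness of $\poly_{F^\ast}$ transfer both the isomorphism and its naturality back to $\poly_\E$.
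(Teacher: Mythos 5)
Your route differs from the paper's. The paper does \emph{not} reduce this theorem to the presheaf case: it constructs the counit (``evaluation'') $p\otimes[p,q]\to q$ and the unit (``pairing'') $q\to[p,p\otimes q]$ explicitly in an arbitrary cartesian closed finite limit category $\E$, as diagrams \eqref{eqn.evaluation} and \eqref{eqn.pairing} read off from \eqref{eqn.closure} via the universal properties of the distributivity pullbacks, and then declares the triangle identities a routine (if tedious) check. Your proposal instead aims for the hom-set bijection $\Hom(a\otimes p,q)\cong\Hom(a,[p,q])$ and transfers it along the fully faithful $\poly_{F^\ast}$. The transfer scaffolding itself is sound: $\poly_{F^\ast}$ is fully faithful, $\otimes$ is preserved since $F^\ast$ preserves finite limits, and $[-,-]$ is preserved since every $\Pi$ in \eqref{eqn.closure} is taken along an exponentiable map and \cref{densepi} applies. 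Naturality would indeed come along for the ride.

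The gap is in the base case. There is no established result that $[-,-]$ is a closure in $\poly_\Ah$ for a general presheaf category --- \cite[Proposition 2.1.11]{spivak2021functorial} covers only $\poly_\smset$ --- so your reduction does not terminate in something citable; it merely relocates the verification. And the claim that the element-wise argument ``applies object-by-object'' in $\Ah$ is false as stated: unlike limits, the exponentials $Q^P$, $q^p$ and the dependent products $\Pi_f$ in a presheaf category are \emph{not} computed objectwise (see the formula in \cref{presheafpi}, which quantifies over maps $\yon(a)\to X$ rather than over $X_a$ alone), so the positions of $[p,q]$ at an object $a$ of $\A$ are not ``morphisms $p\to q$'' but something like morphisms $\yon(a)\cdot p\to q$. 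The intuitive formula $[p,q]\cong\sum_{\phi\colon p\to q}\yon^{\sum_I q[\phi_1 I]}$ on which your bookkeeping rests is specific to $\smset$. You would still have to carry out a genuine computation in $\Ah$ using \cref{presheafpi}, and at that point you have not obviously saved work over the paper's direct construction of the evaluation and pairing maps, which is where the real content of the proof lives. To repair your argument, either prove the $\poly_\Ah$ case honestly (a legitimate but nontrivial task), or adopt the paper's strategy of exhibiting the unit and counit directly from \eqref{eqn.closure}.
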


\begin{proof}
To show that the functors $p \otimes -$ and $[p,-]$ are adjoints, it suffices to define the unit and counit transformations which take the form of morphisms of polynomials $q \to [p,p \otimes q]$ (``pairing'') and $p \otimes [p,q] \to q$ (``evaluation'') respectively.

The evaluation map is a diagram of the form in \eqref{eqn.evaluation},
\begin{equation}\label{eqn.evaluation}
\begin{tikzcd}
\left(Q_\ast \times_Q \left(q^p \times P\right)\right)\times P_\ast \rar & q^p\times P \dar[equals] \\
\bullet \uar[dashed] \rar[dashed] \dar[dashed] & q^p\times P \dar[dashed] \\
Q_\ast \ar[r, "q"'] & Q
\end{tikzcd}
\end{equation}
where the map $q^p\times P \to Q$ is given by the composite of downward-pointing maps of that form in \eqref{eqn.closure}. The diagram in \eqref{eqn.closure} shows that the pullback of this map along $q$ is precisely $Q_\ast \times_Q \left(q^p \times P\right)$, so the desired map $\bullet \to \left(Q_\ast \times_Q \left(q^p \times P\right)\right)\times P_\ast$ is induced by the identity and the composite map $Q_\ast \times_Q \left(q^p \times P\right) \to P_\ast$ on the left side of the diagram in \eqref{eqn.closure}, which commutes over $q^p\times P$ by inspection of the left side of the diagram in \eqref{eqn.closure}.

The pairing map takes the form of the dotted arrows in \eqref{eqn.pairing}, where the diagram below them is that used in the construction of $[p,p \otimes q]$ from \eqref{eqn.closure}.
\begin{equation}\label{eqn.pairing}
\begin{tikzcd}[row sep=scriptsize]
&[-55pt] &[-5pt] Q_\ast \ar{rrrr} &[-20pt] &[-25pt] &[-50pt] &[0pt] Q \dar[equals] \\
& & \bullet_5 \dar[dashed] \ar[dashed]{rrrr} \uar[dashed] & {} & & & Q \dar[dashed] \\
& & \bullet_4 \ar{ddl} \ar{rrr} & {} & & (p \otimes q)^p \times P \ar{dl} \rar & (p \otimes q)^p \ar{dd} \\
& & & {} & \bullet_3 \ar{dl} \ar{dr} & & {} \\
& \bullet_2 \ar{dl} \ar{rr} & & \bullet_1 \ar{ddr} & {} & (P \times Q)^P \times P \ar{ddl} \rar & (P \times Q)^P \ar{ddd} \\
P_\ast \times Q_\ast \times P_\ast \ar{dr} & & {} & & & & {}\\
& P_\ast \times Q_\ast \times P \ar{rrr} & & & P \times Q \times P \ar{dr} \\
& & & & & P \rar & 1
\arrow[phantom, from=2-3, to=3-4, pos=.05, "\lrcorner"]
\arrow[phantom, from=3-3, to=4-4, pos=.05, "\lrcorner"]
\arrow[phantom, from=3-6, to=4-7, pos=.05, "\distpb"]
\arrow[phantom, from=4-5, to=5-5, near start, "\vee"]
\arrow[phantom, from=5-2, to=6-3, pos=.05, "\distpb"]
\arrow[phantom, from=5-6, to=6-7, pos=.05, "\distpb"]
\end{tikzcd}
\end{equation}
Using the universal property of distributivity squares, a map $Q \to (p \otimes q)^p$ is uniquely determined by a map $Q \to (P \times Q)^P$ whose pullback over $(P \times Q)^P \times P$ factors through $\bullet_3$. The desired map is precisely the pairing map for the cartesian closed structure on the category $\E$, and it pulls back to $Q \times P$. By the universal property of pullbacks, a map $Q \times P \to \bullet_3$ commuting over $(P \times Q)^P \times P$ is determined by a map $Q \times P \to \bullet_1$ commuting over $P \times Q \times P$, where the composite $Q \times P \to (P \times Q)^P \times P \to P \times Q \times P$ can be straightforwardly checked to be given by the diagonal map on $P$. By the universal property of distributivity squares, a map $Q \times P \to \bullet_1$ commuting over $P \times Q \times P$ amounts to a map from the pullback of $Q \times P$ over $P_\ast \times Q_\ast \times P$ to $P_\ast \times Q_\ast \times P_\ast$.

It is a general fact that in a category with finite limits and morphisms $f \colon A \to C$ and $g \colon B \to C$, the pullback of the diagonal map $C \to C \times C$ along $f \times g$ is the map $A \times_C B \to A \times B$, so we have 
\[
(Q \times P) \times_{P \times Q \times P} P_\ast \times Q_\ast \times P \cong Q_\ast \times P_\ast \times_P P \cong Q_\ast \times P_\ast.
\]
Therefore, the desired map is of the form $Q_\ast \times P_\ast \to P_\ast \times Q_\ast \times P_\ast$, and hence the diagonal map for $P_\ast$ is satisfactory as it clearly commutes over $P_\ast \times Q_\ast$ with the map $P_\ast \times_P P \to P_\ast \times P$.

In the process of constructing the map $Q \to (p \otimes q)^p$, we have shown using the factorization property of pullbacks that $\bullet_5 \cong Q_\ast \times P_\ast$ as $Q$ pulls back along the composite map $[p,p \otimes q]$ first to $Q \times P$ and then to $Q_\ast \times P_\ast$. Therefore the map $\bullet_5 \to Q_\ast$ can be chosen to be the canonical projection, which commutes over $Q$ as the dotted horizontal arrow $\bullet_5 \to Q$ factors through the projection $Q \times P \to Q$.

While we do not directly prove the triangle equations here to avoid diagrams too large to easily navigate in this format, the constructions of the evaluation and pairing maps directly from the form of the diagram in \eqref{eqn.closure} makes these equations straightforward (albeit tedious) to check. 
\end{proof}

Combining \cref{closure} with the duoidal structure on $\poly_\E$ from \cref{duoidal} lets us describe a useful relationship between the closure $[-,-]$ and the composition product $\tri$.

\begin{corollary}
For $\E$ a cartesian closed finite limit category and $q$ a polynomial in $\E$, $[\yon,q] \cong q$, 
and for polynomials $p_1,p_2,q_1,q_2$ in $\E$ there is a morphism of polynomials of the form
\[
[p_1,q_1] \tri [p_2,q_2] \to [p_1 \tri p_2,q_1 \tri q_2].
\]
\end{corollary}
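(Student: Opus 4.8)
The plan is to prove both claims abstractly, using only the closure adjunction of \cref{closure} and the duoidal interchange map of \cref{duoidal}, rather than chasing the large diagram \eqref{eqn.closure} directly.

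For the isomorphism $[\yon,q]\cong q$, I would exploit that $\yon$ is the unit of $\otimes$, so the functor $\yon\otimes-$ is naturally isomorphic to the identity on $\poly_\E$. Its right adjoint $[\yon,-]$ must then be isomorphic to the identity as well. Concretely, for every polynomial $r$ the closure adjunction $\yon\otimes-\dashv[\yon,-]$ and the unit isomorphism give natural isomorphisms
\[
\Hom(r,[\yon,q])\cong\Hom(\yon\otimes r,q)\cong\Hom(r,q),
\]
so the Yoneda lemma yields $[\yon,q]\cong q$.

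For the comparison morphism I would build it as the adjoint transpose of a map out of $(p_1\tri p_2)\otimes([p_1,q_1]\tri[p_2,q_2])$. The first step is to apply the interchange map \eqref{interchange}, instantiated at the four polynomials $p_1,p_2,[p_1,q_1],[p_2,q_2]$, producing
\[
(p_1\tri p_2)\otimes([p_1,q_1]\tri[p_2,q_2])\to(p_1\otimes[p_1,q_1])\tri(p_2\otimes[p_2,q_2]).
\]
The second step is to apply the two evaluation maps $p_i\otimes[p_i,q_i]\to q_i$ from the proof of \cref{closure} (the counits of the closure adjunction) together with the functoriality of $\tri$, landing in $q_1\tri q_2$. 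Transposing the composite across the adjunction $(p_1\tri p_2)\otimes-\dashv[p_1\tri p_2,-]$ then delivers the desired morphism $[p_1,q_1]\tri[p_2,q_2]\to[p_1\tri p_2,q_1\tri q_2]$.

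The argument is entirely formal, so the only care needed is in matching up the data. In particular, the interchange map must pair each $p_i$ with its own $[p_i,q_i]$ inside a single $\tri$-factor, so that evaluation can be applied factorwise; this is exactly the form recorded in \eqref{interchange}, and the fact (noted after \cref{duoidal}) that the duoidal structure maps are cartesian makes the source and target transparent. I expect the main, and essentially only, obstacle to be bookkeeping: keeping straight the variances — contravariance in the $p_i$ and covariance in the $q_i$ — and confirming that no coherence beyond what is already established for the closure and the duoidal structure is invoked.
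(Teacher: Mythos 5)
Your proposal is correct and, for the comparison morphism, follows exactly the paper's construction: precompose the identity on $q_1 \tri q_2$ with the factorwise evaluation maps and the interchange map \eqref{interchange}, then transpose across the closure adjunction. The only (minor) divergence is in the first claim, where the paper reads $[\yon,q]\cong q$ directly off the defining diagram \eqref{eqn.closure} (the relevant exponentials trivialize when $P=P_\ast=1$), whereas you derive it formally from uniqueness of right adjoints to $\yon\otimes-\cong\id$; both are valid one-line arguments.
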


\begin{proof}
The first claim follows immediately from the definition, as when $p$ is given by $\yon$ the rightmost morphism in the top row of \eqref{eqn.closure} is the identity on $Q$.

For the second claim, this morphism arises from applying the following sequence of evaluation, interchange \eqref{interchange}, and transpose maps to the identity morphism on $q_1 \tri q_2$.
\[
\begin{array}{rcl}
\Hom_{\poly_\E}\left(q_1 \tri q_2, q_1 \tri q_2\right)&\to&
\Hom_{\poly_\E}\left((p_1 \otimes [p_1,q_1]) \tri (p_2 \otimes [p_2,q_2]), q_1 \tri q_2\right)\\&\to&
\Hom_{\poly_\E}\left((p_1 \tri p_2) \otimes ([p_1,q_1] \tri [p_2,q_2]), q_1 \tri q_2\right)\\&\cong&
\Hom_{\poly_\E}\left([p_1,q_1] \tri [p_2,q_2], [p_1 \tri p_2,q_1 \tri q_2]\right) 
\end{array}
\]
\end{proof}

\section{Coclosures for $\tri$}\label{sec.coclosures}

Whereas a closure for a monoidal structure $\otimes$ on a category is a natural right adjoint to the functors $a \otimes -$, a \emph{coclosure} is a natural left adjoint. The monoidal structure on the category $\poly_\E$ given by $\tri$ is not symmetric, so there is furthermore a distinction between a left- and right-coclosure for $\tri$, which are left adjoints to functors of the form $p \trii -$ and $- \trii p$ respectively. We show that $\tri$ has a partially-defined right-coclosure, which is total when $\E$ is locally cartesian closed, and an \emph{indexed} left-coclosure for any finite limit category.

\begin{definition}
For polynomials $p$ and $q$ in a finite limit category $\E$, the morphism $\lens{p}{q}$ in $\E$ is given by the composite of vertical arrows in \eqref{eqn.rightcoclosure},
\begin{equation}\label{eqn.rightcoclosure}
\begin{tikzcd}
&[-30pt] \bullet \ar{dl} \rar \ar[phantom]{dr}[pos=.1]{\distpb} & \Pi_{\id_P \times q} \left(p \times \id_{Q_\ast}\right) \ar{dd} \\
P_\ast \times Q_\ast \ar{dr} & & {} \\
& P \times Q_\ast \rar & P \times Q \dar \\
& & P
\end{tikzcd}
\end{equation}
where the morphism $P \times Q_\ast \To{\id_P \times q} P \times Q$ is exponentiable by \cref{productpi} and the pentagon is a distributivity pullback. 
\end{definition}

Note that $\lens{p}{q}$ is not generally exponentiable. While the projection $P \times Q \to P$ is exponentiable when $\E$ is cartesian closed by \cref{productpi}, the functor $\Pi_{\id_P \times q}$ does not generally preserve exponentiable morphisms. Only when $\E$ is locally cartesian closed, and hence all morphisms are exponentiable, can $\lens{p}{q}$ be assumed to be a polynomial without any assumptions on $p,q$.

\begin{example}
In $\poly_{\smset}$, we have from \cite[Proposition 2.1.15]{spivak2021functorial} the formula for the right coclosure
\[
\lens{p}{q} = \sum_{I \in P} \yon^{\sum_{J \in Q} p[I]^{q[J]}},
\]
corresponding to the dependent projection function $\sum_{I,J} p[I]^{q[J]} \to P$. Here it is straightforward to see that a morphism $\lens{p}{q} \to r$ consists of a function $\phi_1 \colon P \to R$, for each $I \in P$ a function $\phi^\sharp_I \colon r[\phi_1I] \to Q$, and for each $k \in r[\phi_1I]$ a function $q[\phi^\sharp_Ij] \to p[I]$, precisely the data of a morphism $p \to r \tri q$.
\end{example}

\begin{example}\label{coclosurecounterexample}
In $\poly_\smcat$ there is a similar formula, where the total space has objects $\sum_{I,J} p[I]^{q[J]}$ for $I \in P$ and $J \in Q$, and a morphism from $h \colon q[J] \to p[I]$ to $h' \colon q[J'] \to p[I']$ consists of morphisms $f \colon I \to I'$ in $P$, $g \colon J \to J'$ in $Q$, and a square in $\prof$ of the form
\[
\begin{tikzcd}
{q[J]} \rar[tick,""{name=U, below}]{{q[g]}} \dar[swap]{h} & {q[J']} \dar{h'} \\
{p[I]} \rar[tick,swap,""{name=V, above}]{{p[f]}} & {p[I']}.
\arrow[Rightarrow,shorten >=8,shorten <=5,from=U,to=V-|U]
\end{tikzcd}
\]

For an example of when $\lens{p}{q}$ is not exponentiable, let $P$ and $Q$ both be the ordinal category $0 \to 1 \to 2$, set $q[-] \colon Q \to \prof$ to send each object to the terminal category and each non-identity morphism to the empty profunctor, while $p[-] \colon P \to \prof$ sends $0$ and $2$ to the terminal category, $1$ to the empty category, and all non-identity morphisms to empty profunctors. In $\lens{p}{q}$, there is a morphism in the total space as on the left in \eqref{eqn.noclosure} sent to $0 \to 2$ in $P$,
\begin{equation}\label{eqn.noclosure}
\begin{tikzcd}
1 \rar[tick,""{name=U, below}]{\varnothing} \dar[equals] & 1 \dar[equals] \\
1 \rar[tick,swap,""{name=V, above}]{\varnothing} & {1}
\arrow[equals,shorten=4,from=U,to=V]
\end{tikzcd}\qquad\qquad\begin{tikzcd}
1 \rar[tick,""{name=S, below}]{\varnothing} \dar[equals] & 1 \rar[tick,""{name=U, below}]{\varnothing} \dar[dashed] & 1 \dar[equals] \\
1 \rar[tick,swap,""{name=T, above}]{\varnothing} & \varnothing \rar[tick,swap,""{name=V, above}]{\varnothing} & 1
\arrow[Rightarrow,dashed,shorten <=4,shorten >=7,from=S,to=T-|S]
\arrow[Rightarrow,dashed,shorten <=4,shorten >=7,from=U,to=V-|U]
\end{tikzcd}
\end{equation}
but there is no lift of the factorization $0 \to 1 \to 2$ in $P$ to a factorization of this morphism in the total space, which would have the form as on the right in \eqref{eqn.noclosure}. This is because there is no functor from the terminal category into the empty category.
\end{example}

We now show that when it exists as a polynomial, $\lens{-}{-}$ is a right-coclosure for $\tri$ in the category $\poly_\E$. Note that it is straightforward to check that among choices of $p,q$ where $\lens{p}{q}$ is a polynomial, it is covariantly functorial in $p$ and contravariantly functorial in $q$.

\begin{theorem}\label{rightcoclosure}
For polynomials $p,q,r$ in a category $\E$ with finite limits, if $\lens{p}{q}$ is a polynomial then we have
\[
\Hom_{\poly_\E}\left(p,r \trii q\right) \cong \Hom_{\poly_\E}\left(\lens{p}{q},r\right)
\]
naturally in suitable choices of $p,q,r$.
\end{theorem}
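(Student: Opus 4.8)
The plan is to use the transfer technique of \cref{polycategory,monoidalcat}: prove the bijection in a presheaf category, where it can be checked element-wise, and then descend to the general finite limit category $\E$ along a fully faithful singular functor.

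First I would fix a fully faithful dense functor $F \colon \A \to \E$ (taking the identity if need be) and pass to $\poly_\Ah$ along $\poly_{F^\ast}$. By \cref{densepi} the singular functor $F^\ast$ preserves finite limits, pullbacks, and exponentials, hence also the distributivity pullbacks and the functors $\Pi_{(-)}$ appearing in \eqref{eqn.rightcoclosure}; therefore $F^\ast$ carries the entire diagram defining $\lens{p}{q}$ to the corresponding diagram in $\Ah$, giving $F^\ast\lens{p}{q} \cong \lens{F^\ast p}{F^\ast q}$. Here the hypothesis that $\lens{p}{q}$ is a polynomial is exactly what makes the left-hand Hom-set an object of $\poly_\E$; in $\Ah$ the right-hand construction is automatically a polynomial since $\Ah$ is locally cartesian closed. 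Likewise $F^\ast$ preserves $\tri$ by \cref{monoidalcat}, so $F^\ast(r \trii q) \cong F^\ast r \trii F^\ast q$. Since $\poly_{F^\ast}$ is fully faithful, both Hom-sets in the statement are in natural bijection with the corresponding Hom-sets in $\poly_\Ah$, and it suffices to establish
\[
\Hom_{\poly_\Ah}\left(F^\ast p, F^\ast r \trii F^\ast q\right) \cong \Hom_{\poly_\Ah}\left(\lens{F^\ast p}{F^\ast q}, F^\ast r\right).
\]

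It then remains to prove the bijection when $\E$ is a presheaf category, where I would argue element-wise exactly as in the category of sets, the formula for $\lens{p}{q}$ being that of \cite[Proposition 2.1.15]{spivak2021functorial}. Unwinding the right-hand side, a morphism $\lens{p}{q} \to r$ consists of a map on positions $\phi_1 \colon P \to R$ together with, for each position $I$ and each direction $k \in r[\phi_1 I]$, a choice of position $J \in Q$ and a backward map $q[J] \to p[I]$. Unwinding the left-hand side via \cref{presheafcomp}, a morphism $p \to r \trii q$ consists of, for each $I$, a position $K \in R$ and a map $L \colon r[K] \to Q$, together with a backward map on directions $\sum_{k \in r[K]} q[L(k)] \to p[I]$, i.e.\ for each $k \in r[K]$ a backward map $q[L(k)] \to p[I]$. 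Setting $K = \phi_1 I$, $L(k) = J$, and matching the two families of backward maps $q[J] \to p[I]$ exhibits the two sets of data as identical, yielding the desired bijection; naturality in $p,q,r$ is immediate since both descriptions are natural in all three variables, and it descends to $\poly_\E$ through the natural bijections supplied by full faithfulness of $\poly_{F^\ast}$.

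The main obstacle is not the element-wise matching, which is essentially bookkeeping, but verifying that $F^\ast$ genuinely preserves the construction \eqref{eqn.rightcoclosure} of $\lens{p}{q}$ up to isomorphism --- in particular that it sends the distributivity pullback around $(\id_P \times q,\; p \times \id_{Q_\ast})$ to the distributivity pullback around $(\id_{F^\ast P} \times F^\ast q,\; F^\ast p \times \id_{F^\ast Q_\ast})$. This follows from \cref{densepi}, since a distributivity pullback is precisely the data of a counit for $\Pi$ together with a pullback square, both of which $F^\ast$ preserves; the one point requiring care is that $F^\ast$ commutes with the product $P \times Q_\ast$ and with $\Pi_{\id_P \times q}$ simultaneously, which is guaranteed by its preservation of finite limits and exponentials.
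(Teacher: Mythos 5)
Your proposal is correct in outline but takes a genuinely different route from the paper. The paper proves the adjunction bijection directly in the general finite limit category $\E$: it unwinds a morphism $\lens{p}{q}\to r$ and a morphism $p\to r\trii q$ into the explicit diagrams \eqref{eqn.fromrightcoclosure} and \eqref{eqn.totri} and constructs the two transposes purely from the universal properties of the distributivity pullback, products, and pullback pasting/cancellation --- no reduction to presheaves occurs. You instead invoke the transfer technique: show $F^\ast\lens{p}{q}\cong\lens{F^\ast p}{F^\ast q}$ using preservation of finite limits and exponentials from \cref{densepi}, use full faithfulness of $\poly_{F^\ast}$ to identify both Hom-sets with their images in $\poly_\Ah$, and then check the bijection concretely in the presheaf category. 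Both work. What the paper's direct argument buys is that the transposition maps are constructed explicitly in $\E$ (these explicit transposes are reused later, e.g.\ for the comonoid structure on $\lens{p}{p}$ and in \cref{trilimit}), and it sidesteps the extra verification that the singular functor commutes with the coclosure construction. What your route buys is consistency with the paper's general philosophy and a more transparent combinatorial picture of the bijection. Two caveats on your version: first, the phrase ``element-wise exactly as in the category of sets'' undersells the presheaf case --- positions at stage $a$ are maps $\yon(a)\to P$, fibers are presheaves, and the maps $r[K]\to Q$ and $q[L(k)]\to p[I]$ must be natural presheaf maps assembled via the formula of \cref{presheafpi}, in the style of \cref{presheafcomp} --- though this is the same bookkeeping the paper itself carries out for \cref{comonoidcategory}, so it is not a gap; second, the naturality claim needs the comparison isomorphisms $F^\ast\lens{p}{q}\cong\lens{F^\ast p}{F^\ast q}$ to themselves be natural in $p$ and $q$, which does follow from their universal-property construction but deserves a sentence.
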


\begin{proof}
A morphism $\phi$ of polynomials from $\lens{p}{q}$ to $r$ consists of the dotted arrows in \eqref{eqn.fromrightcoclosure},
\begin{equation}\label{eqn.fromrightcoclosure}
\begin{tikzcd}
&[-30pt] & P \times_R R_\ast \dar[dashed,swap]{\phi^\sharp} \rar[dashed] \ar[phantom]{dr}[pos=.1]{\lrcorner} & R_\ast \ar{dddd} \\
& \bullet \ar{dl} \rar \ar[phantom]{dr}[pos=.1]{\distpb} & \Pi_{\id_P \times q} \left(p \times \id_{Q_\ast}\right) \ar{dd} & {} \\
P_\ast \times Q_\ast \ar{dr} & & {} \\
& P \times Q_\ast \rar{\id_P \times q} & P \times Q \dar \\
& & P \rar[dashed]{\phi_1} & R
\end{tikzcd}
\end{equation}
while a morphism $\psi$ of polynomials from $p$ to $r \trii q$ consists of the dotted arrows in \eqref{eqn.totri}.
\begin{equation}\label{eqn.totri}
\begin{tikzcd}
P_\ast \ar{rrrr} & &[-20pt] &[-20pt] & P \dar[equals] \\
Q_\ast \times_Q (P \times_R R_\ast) \uar[dashed]{\psi^\sharp} \dar[dashed] \ar[dashed]{rr} \ar[phantom]{ddrr}[pos=0]{\lrcorner} & & P \times_R R_\ast \dar[dashed] \ar[dashed]{rr} \ar[phantom]{ddrr}[pos=0]{\lrcorner} & & P \dar[dashed]{\psi_1} \\
\bullet \ar{dd} \ar{rr} \ar[phantom]{ddrr}[pos=0]{\lrcorner} & & \bullet \ar{rr} \dar \ar[phantom]{ddrr}[pos=0]{\distpb} & & \bullet \ar{dd} \\
& & Q \times R_\ast \ar{dl} \ar{dr} & & {} \\
Q_\ast \rar & Q & {} & R_\ast \rar & R 
\end{tikzcd}
\end{equation}

Given a morphism of polynomials $\phi \colon \lens{p}{q} \to r$ as in \eqref{eqn.fromrightcoclosure}, we can construct a morphism $\psi \colon p \to r \trii q$ as follows. By the universal properties of distributivity squares and products, the desired map $\psi_1$ is uniquely determined by a map $P \to R$ and a map from $P \times_R R_\ast$ to $Q$; the former is given by $\phi_1$ and the latter by the map $P \times_R R_\ast \to P \times Q$ in \eqref{eqn.fromrightcoclosure} composed with the projection to $Q$. As the map $P \times_R R_\ast \to Q$ in \eqref{eqn.totri} factors through the projection $P \times Q \to Q$ and $\id_P \times q$ is apullback of $q$, the pullback $Q_\ast \times_Q (P \times_R R_\ast)$ in \eqref{eqn.totri} agrees with the pullback of $P \times_R R_\ast$ along $\id_P \times q$ in \eqref{eqn.fromrightcoclosure} (not pictured). The projection from this pullback to $P \times Q_\ast$ factors through $P_\ast \times Q_\ast$ and therefore by projection maps to $P_\ast$ in a manner commuting over $P$, providing the definition of $\psi^\sharp$ needed to complete the definition of $\psi$.

For the converse, starting with $\psi$ as in \eqref{eqn.totri} and constructing a morphism of polynomials $\phi$ as in \eqref{eqn.fromrightcoclosure}, $\phi_1$ can be defined as the composite vertical map in \eqref{eqn.totri} from $P$ to $R$. Its pullback along $r$ maps to both $P$ and $Q$ and hence to $P \times Q$ as in \eqref{eqn.fromrightcoclosure}. To define the map $\phi^\sharp$ requires such a morphism $P \times_R R_\ast$ along with a map from its pullback along $\id_P \times q$ to $P_\ast \times Q_\ast$. As discussed above this pullback is precisely the pullback $Q_\ast \times_Q (P \times_R R_\ast)$ depicted in \eqref{eqn.totri}, which maps to both $P_\ast$ (commuting over $P$) and $Q_\ast$, and hence to their product.

Naturality of these constructions is straightforward to check as they rely only on universal properties, but we omit this for brevity.
\end{proof}

While the mapping property holds for $\lens{-}{-}$ merely as a morphism in the category $\E$ (since the proof of \cref{rightcoclosure} makes no reference to its exponentiability beyond the fact that morphisms of this sort were only defined for polynomials), when $\E$ is locally cartesian closed a more formal statement can be made.

\begin{corollary}
For a locally cartesian closed category $\E$ with all finite limits, the operation $\lens{-}{-}$ is a right-coclosure for the monoidal category $(\poly_\E,\yon,\tri)$.
\end{corollary}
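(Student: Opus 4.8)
The plan is to observe that local cartesian closure of $\E$ removes the only obstruction to \cref{rightcoclosure} applying universally, so that the corollary follows almost formally from that theorem together with the functoriality already recorded.

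First I would note that since $\E$ is locally cartesian closed, every morphism of $\E$ is exponentiable. Consequently the morphism $\lens{p}{q}$ built in \eqref{eqn.rightcoclosure}---assembled from $\id_P \times q$, the functor $\Pi_{\id_P \times q}$, products, and pullbacks---is automatically exponentiable (as is every morphism), so that $\lens{p}{q}$ is a genuine polynomial for \emph{all} choices of $p$ and $q$. This is precisely the hypothesis ``$\lens{p}{q}$ is a polynomial'' under which \cref{rightcoclosure} was established, and under local cartesian closure it now holds unconditionally; in particular the remark preceding \cref{rightcoclosure}, that $\lens{-}{-}$ is covariantly functorial in $p$ and contravariantly functorial in $q$, applies to every pair of polynomials.

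Next I would invoke \cref{rightcoclosure} itself, which supplies a bijection
\[
\Hom_{\poly_\E}\left(p, r \trii q\right) \cong \Hom_{\poly_\E}\left(\lens{p}{q}, r\right)
\]
natural in $p$, $q$, and $r$. Fixing $q$ and reading this as natural in $p$ and $r$ exhibits the functor $\lens{-}{q}$ as a left adjoint to $- \trii q$. Since a right-coclosure for $(\poly_\E, \yon, \tri)$ is by definition a natural left adjoint to the functors $- \trii q$, this is exactly the asserted structure, and the naturality in $q$ (with $\lens{-}{-}$ contravariant in its second argument) assembles these adjoints into a single bifunctorial coclosure.

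The point to confirm---rather than a genuine obstacle---is that the ``suitable choices'' proviso of \cref{rightcoclosure} becomes vacuous under local cartesian closure, promoting the partially-defined adjunction of that theorem to a total one. No further diagram-chasing is required: all the substantive content lives in \cref{rightcoclosure}, and the corollary merely packages it once the exponentiability of $\lens{p}{q}$ is guaranteed for every pair of polynomials $p,q$.
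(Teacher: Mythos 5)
Your proposal is correct and matches the paper's (implicit) argument exactly: the paper presents this corollary without a separate proof, treating it as immediate from \cref{rightcoclosure} once one observes that local cartesian closure makes every morphism exponentiable, so $\lens{p}{q}$ is a polynomial for all $p,q$ and the partially-defined adjunction becomes total.
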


There are numerous properties of the right-coclosure that could be proven in this setting, but in the interest of space we provide an example of just one describing its interaction with the Dirichlet monoidal structure on $\poly_\E$.

\begin{corollary}
For $\E$ a finite limit category and $p$ is a polynomial in $\E$ we have $\lens{p}{\yon} \cong p$, and for $p_1,p_2,q_1,q_2$ polynomials in $\E$ we have a natural morphism of polynomials
\[
\lens{p_1 \otimes p_2}{q_1 \otimes q_2} \to \lens{p_1}{q_1} \otimes \lens{p_2}{q_2}
\]
whenever all of the coclosures involved are polynomials.
\end{corollary}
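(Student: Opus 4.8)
The plan is to dualize the proof of the preceding corollary for the closure $[-,-]$, exploiting that \cref{rightcoclosure} exhibits $\lens{-}{q}$ as a (partially-defined) left adjoint to $-\tri q$, in exactly the way that $[p,-]$ is right adjoint to $p\otimes-$, together with the interchange map \eqref{interchange} coming from the duoidal structure of \cref{duoidal}.

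For the first claim I would argue directly from the diagram \eqref{eqn.rightcoclosure}. Taking $q=\yon$ forces $Q=Q_\ast=1$ and $q=\id_1$, so that the horizontal morphism $\id_P\times q$ appearing there is, up to the product unitor, the identity on $P$. By \cref{ex.iso_distributivity} a distributivity pullback around an isomorphism is trivial, so $\Pi_{\id_P\times q}(p\times\id_{Q_\ast})\cong p$ and the whole composite defining $\lens{p}{\yon}$ collapses to $p$. (Alternatively, since $r\tri\yon\cong r$ naturally, \cref{rightcoclosure} gives $\Hom_{\poly_\E}(\lens{p}{\yon},r)\cong\Hom_{\poly_\E}(p,r\tri\yon)\cong\Hom_{\poly_\E}(p,r)$ naturally in $r$, whence $\lens{p}{\yon}\cong p$ by Yoneda.)

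For the second claim I would construct the morphism as a transpose across the coclosure adjunction. Applying the isomorphism of \cref{rightcoclosure} with $r=\lens{p_i}{q_i}$ to the identity morphism yields unit maps $p_i\to\lens{p_i}{q_i}\tri q_i$ for $i=1,2$, available precisely because each $\lens{p_i}{q_i}$ is assumed to be a polynomial. The desired map is then obtained by applying the following sequence to the identity on $p_1\otimes p_2$:
\[
\begin{array}{rcl}
\Hom_{\poly_\E}\left(p_1\otimes p_2,\,p_1\otimes p_2\right)&\to&
\Hom_{\poly_\E}\left(p_1\otimes p_2,\,(\lens{p_1}{q_1}\tri q_1)\otimes(\lens{p_2}{q_2}\tri q_2)\right)\\&\to&
\Hom_{\poly_\E}\left(p_1\otimes p_2,\,(\lens{p_1}{q_1}\otimes\lens{p_2}{q_2})\tri(q_1\otimes q_2)\right)\\&\cong&
\Hom_{\poly_\E}\left(\lens{p_1\otimes p_2}{q_1\otimes q_2},\,\lens{p_1}{q_1}\otimes\lens{p_2}{q_2}\right).
\end{array}
\]
Here the first arrow postcomposes with the tensor $\eta_{p_1}\otimes\eta_{p_2}$ of the two unit maps (using that $\otimes$ is a bifunctor by \cref{duoidal}); the second postcomposes with the interchange map \eqref{interchange}, which in this instance reads
\[
(\lens{p_1}{q_1}\tri q_1)\otimes(\lens{p_2}{q_2}\tri q_2)\to(\lens{p_1}{q_1}\otimes\lens{p_2}{q_2})\tri(q_1\otimes q_2);
\]
and the final isomorphism is \cref{rightcoclosure} applied to the pair $p_1\otimes p_2$, $q_1\otimes q_2$, which is legitimate since $\lens{p_1\otimes p_2}{q_1\otimes q_2}$ is assumed to be a polynomial.

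Each step in this composite is natural in the four polynomials $p_1,p_2,q_1,q_2$ — the unit maps by naturality of the coclosure adjunction, the interchange by \cref{duoidal}, and the transpose by \cref{rightcoclosure} — so the resulting morphism is natural as claimed. I expect the only genuine subtlety to be bookkeeping around the partiality of the adjunction in \cref{rightcoclosure}: its hom-set isomorphism is defined only when the relevant bracket is a polynomial, so one must confirm that every bracket invoked above lies among those assumed to be polynomials before transposing. Granting that hypothesis, no new computation is needed and the argument runs exactly parallel to the closure case.
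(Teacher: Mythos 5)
Your proposal is correct and matches the paper's argument essentially step for step: the first claim follows from triviality of $\Pi_{\id}$ in the defining diagram, and the second is built by transposing identities to unit maps $p_i \to \lens{p_i}{q_i}\tri q_i$, tensoring, applying the duoidal interchange, and transposing back across \cref{rightcoclosure}. The only cosmetic difference is that you start the hom-chain at $\Hom(p_1\otimes p_2, p_1\otimes p_2)$ while the paper starts at the product of endo-hom-sets of the brackets, which yields the identical morphism.
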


\begin{proof}
That $\lens{-}{\yon}$ is isomorphic to the identity functor follows immediately from the definition as $\Pi_{\id}$ is always an equivalence of categories.

The remaining natural morphism is constructed from the duoidal relationship between $\otimes$ and $\tri$, by applying the map below to the identity morphisms on $\lens{p_1}{q_1}$ and $\lens{p_2}{q_2}$.
\begin{align*}
\Hom_{\poly_\E}\left(\lens{p_1}{q_1}, \lens{p_1}{q_1}\right) &\times \Hom_{\poly_\E}\left(\lens{p_2}{q_2}, \lens{p_2}{q_2}\right)\\&\qquad\cong 
\Hom_{\poly_\E}\left(p_1, \lens{p_1}{q_1} \trii q_1\right) \times \Hom_{\poly_\E}\left(p_2, \lens{p_2}{q_2} \trii q_2\right)\\&\qquad\to 
\Hom_{\poly_\E}\left(p_1 \otimes p_2, \left(\lens{p_1}{q_1} \trii q_1\right) \otimes \left(\lens{p_2}{q_2} \trii q_2\right) \right)\\&\qquad\to 
\Hom_{\poly_\E}\left(p_1 \otimes p_2, \left(\lens{p_1}{q_1} \otimes \lens{p_2}{q_2}\right) \tri \left(q_1 \otimes q_2\right) \right)\\&\qquad\cong 
\Hom_{\poly_\E}\left(\lens{p_1 \otimes p_2}{q_1 \otimes q_2}, \lens{p_1}{q_1} \otimes \lens{p_2}{q_2}\right)
\end{align*}
\end{proof}

The monoidal structure $(\yon,\tri)$ on $\poly_\E$ also has something similar to a left-coclosure, but unlike the mapping property in \cref{rightcoclosure} here the set of maps from a polynomial $p$ to a composite $q \trii r$ is isomorphic to a disjoint union of many different sets of morphisms into $r$; this is called an indexed left-coclosure for $(\yon,\trii)$.

\begin{definition}
For polynomials $p,q$ in a finite limit category $\E$ and a morphism $P \To{f} Q$ in $\E$, the polynomial $p \frowny{f} q$ is defined by the diagram of pullbacks in \eqref{eqn.frown}.
\begin{equation}\label{eqn.frown}
\begin{tikzcd}
P_\ast \times_Q Q_\ast \rar{p \frowny{f} q} \dar \ar[phantom]{dr}[pos=.05]{\lrcorner} & P \times_Q Q_\ast \dar \rar \ar[phantom]{dr}[pos=.05]{\lrcorner} & Q_\ast \dar{q} \\
P_\ast \rar{p} & P \rar{f} & Q
\end{tikzcd}
\end{equation}
\end{definition}

\begin{example}
In $\poly_{\smset}$, for a function $P \to Q$ we have the formula
\[
p \frowny{f} q \cong \sum_{I \in P} q[fI] \times \yon^{p[I]}.\qedhere
\]
\end{example}

Note that for a cartesian morphism $\phi \colon q \to q'$ there is an isomorphism 
\[
p\frowny{f}q \cong p\frowny{\phi_1 \circ f}q'.
\]
For a morphism of polynomials $\gamma \colon p' \to p$, there is a morphism 
\[
\gamma \frowny q \colon p' \frowny{f \circ \gamma_1} q \to p \frowny{f} q
\]
given on positions by $\gamma_1 \times_Q \id_{Q_\ast} \colon P' \times_Q Q_\ast \to P \times_Q Q_\ast$ and on directions by 
\[
(P' \times_Q Q_\ast) \times_{(P \times_Q Q_\ast)} (P_\ast \times_Q Q_\ast) \cong (P' \times_P P_\ast) \times_Q Q_\ast \To{\gamma^\# \times_Q \id_{Q_\ast}} P'_\ast \times_Q Q_\ast.
\]

One canonical example of this construction is in the case when the morphism $f$ is an identity, from which we recover some familiar notation.

\begin{example}\label{pstar}
The polynomial $p \frowny{\id_P} p$ is the pullback of $p$ along itself, resulting in a morphism $P_\ast \times_P P_\ast \to P_\ast$. Based on this form, we denote the morphism $p \frowny{\id_P} p$ as
\[P_{\ast\ast} \To{p_\ast} P_\ast,\] 
making the existing notation of ``$P_\ast$'' for the domain of a polynomial $p$ with codomain $P$ an instance of this more general construction of a polynomial $p_\ast$ whose codomain is $P_\ast$. In the case of $\poly_{\smset}$, we have 
\[
p_\ast = \sum_{I \in P} p[I]\yon^{p[I]}.
\]

There is by definition a cartesian morphism of polynomials $p_\ast \to p$, and by the construction of pullbacks of cartesian morphisms in \cref{hasconnectedlimits} along with the cancellation property of pullback squares, if $p \to q$ is a cartesian morphism of polynomials then its pullback along $q_\ast \to q$ is precisely $p_\ast$.
\end{example}

As the definition of $p \frowny{f} q$ uses only pullbacks in the category $\E$, commutation of products with limits immediately shows the following relationship with the tensor product.

\begin{lemma}
For polynomials $p_1,p_2,q_1,q_2$ in a finite limit category $\E$, and morphisms $P_1 \To{f_1} Q_1$ and $P_2 \To{f_2} Q_2$ in $\E$, we have a natural isomorphism
\[
p_1 \otimes p_2 \frowny{f_1 \times f_2} q_1 \otimes q_2 \cong \left(p_1 \frowny{f_1} q_1\right) \otimes \left(p_2 \frowny{f_2} q_2\right).
\]
\end{lemma}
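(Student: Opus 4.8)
The plan is to reduce the whole statement to the single folklore fact that in a category with finite limits a product of pullbacks is the pullback of the products: for cospans $A_i\To{a_i}C_i\From{b_i}B_i$ indexed by $i\in\{1,2\}$, there is a canonical isomorphism
\[
(A_1\times_{C_1}B_1)\times(A_2\times_{C_2}B_2)\;\cong\;(A_1\times A_2)\times_{C_1\times C_2}(B_1\times B_2),
\]
natural in both cospans. Both sides enjoy the same universal property: a map from an arbitrary object $X$ into either side is exactly a pair of maps $x_i\colon X\to A_i$ together with a pair $y_i\colon X\to B_i$ such that $a_i\circ x_i=b_i\circ y_i$ for each $i$, the equalizing condition over $C_1\times C_2$ being tested coordinatewise.

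First I would unwind both sides of the claimed isomorphism as explicit morphisms of $\E$. Pasting the two pullback squares in \eqref{eqn.frown} exhibits the domain $P_\ast\times_Q Q_\ast$ of $p\frowny{f}q$ as the pullback of $P_\ast\To{f\circ p}Q\From{q}Q_\ast$ and its codomain $P\times_Q Q_\ast$ as the pullback of $P\To{f}Q\From{q}Q_\ast$, with the morphism $p\frowny{f}q$ itself induced by $p$ on the first factor and the identity on the second. Consequently the codomain of $(p_1\frowny{f_1}q_1)\otimes(p_2\frowny{f_2}q_2)$ is $(P_1\times_{Q_1}(Q_1)_\ast)\times(P_2\times_{Q_2}(Q_2)_\ast)$ and its domain is $((P_1)_\ast\times_{Q_1}(Q_1)_\ast)\times((P_2)_\ast\times_{Q_2}(Q_2)_\ast)$, whereas the codomain of the left-hand side is $(P_1\times P_2)\times_{Q_1\times Q_2}((Q_1)_\ast\times(Q_2)_\ast)$ and its domain is $((P_1)_\ast\times(P_2)_\ast)\times_{Q_1\times Q_2}((Q_1)_\ast\times(Q_2)_\ast)$, since $f_1\times f_2$ and $q_1\times q_2$ are by definition the product maps.

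Next I would apply the folklore fact twice. With $A_i=P_i$, $B_i=(Q_i)_\ast$, $C_i=Q_i$ and structure maps $f_i$ and $q_i$ it produces the isomorphism of codomains; with $A_i=(P_i)_\ast$ and structure map $f_i\circ p_i$ (keeping $B_i=(Q_i)_\ast$, $C_i=Q_i$, $q_i$) it produces the isomorphism of domains. Both invocations use the same product cospan appearing on the left-hand side, which is why no rearrangement is needed.

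Finally I would check that these two isomorphisms intertwine the two frown morphisms, i.e.\ that the square they form commutes. This is immediate: each frown morphism is induced componentwise by the $p_i$ and identities, and each vertical isomorphism is the canonical comparison between a product of pullbacks and a pullback of products, so commutativity follows from the universal properties of the products and pullbacks involved. Naturality in $p_i,q_i,f_i$ holds for the same reason, as every map in sight is assembled from projections. I do not expect a genuine obstacle: the content is exactly the interchange of finite products with pullbacks flagged in the remark preceding the statement, and the only care required is the bookkeeping that presents the domain of $\frown$ as a single pullback, so that the folklore fact can be invoked in identical form for domain and codomain.
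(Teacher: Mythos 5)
Your proposal is correct and is exactly the paper's argument: the paper gives no explicit proof beyond the remark that the definition of $\frowny{f}$ uses only pullbacks and that products commute with limits, which is precisely the ``product of pullbacks is the pullback of products'' fact you spell out for the domain and codomain and then check against the induced frown morphisms. Your version just makes the bookkeeping explicit.
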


Finally, we show that $\frown$ is an indexed left-coclosure.

\begin{theorem}\label{indexedleftcoclosure}
The operation $\frown$ is an indexed left-coclosure for $\tri$, in the sense that for polynomials $p,q,r$ in a finite limit category $\E$,
\[
\Hom_{\poly_\E}\left(p, q \trii r\right) \cong \coprod_{f \colon P \to Q} \Hom_{\poly_\E}\left(p \frowny{f} q, r\right)
\]
naturally over all maps in $p$ and $r$ and cartesian maps in $q$.
\end{theorem}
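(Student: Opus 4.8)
The plan is to establish the bijection directly, in the style of the proof of \cref{rightcoclosure}, by unwinding a morphism $\psi \colon p \to q \trii r$ and a morphism $p \frowny{f} q \to r$ into exactly the same data. A reduction to the presheaf case is less convenient here, since the statement for a presheaf category is not separately available in the excerpt and the argument below runs unchanged in any finite limit category.

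First I would analyze the component on positions. By \eqref{eqn.comp}, the base of $q \trii r$ is $\Pi_q(R \times Q_\ast)$, so the position component of $\psi$ is a map $\psi_1 \colon P \to \Pi_q(R \times Q_\ast)$. Applying the defining universal property of $\Pi_q$ from \eqref{eqn.pihoms}, with $D = P$, exponentiable morphism $q$, and $C = R \times Q_\ast$ regarded over $Q_\ast$, decomposes $\psi_1$ as the data of a map $f \colon P \to Q$ together with a map $\Delta_q P = P \times_Q Q_\ast \to R \times Q_\ast$ over $Q_\ast$, i.e.\ simply a map $g \colon P \times_Q Q_\ast \to R$. The choice of $f$ is precisely the index in the coproduct $\coprod_{f \colon P \to Q}$, and since $P \times_Q Q_\ast$ is the base of $p \frowny{f} q$ by \eqref{eqn.frown}, the map $g$ is exactly the position component of a prospective morphism $p \frowny{f} q \to r$.

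Next I would identify the domain of the direction component. Pulling the total space $(\Pi_q(R \times Q_\ast) \times_Q Q_\ast) \times_R R_\ast$ of $q \trii r$ back along $\psi_1$ gives, upon tracing the counit $\Delta_q \Pi_q(R \times Q_\ast) \to R \times Q_\ast$ appearing in \eqref{eqn.comp}, the object $(P \times_Q Q_\ast) \times_R R_\ast$, where the map $P \times_Q Q_\ast \to R$ is exactly the transpose $g$ produced above. Hence the direction component of $\psi$ takes the form $\psi^\sharp \colon (P \times_Q Q_\ast) \times_R R_\ast \to P_\ast$ over $P$. On the other side, the direction component of a morphism $p \frowny{f} q \to r$ is a map $(P \times_Q Q_\ast) \times_R R_\ast \to P_\ast \times_Q Q_\ast$ over $P \times_Q Q_\ast$; but by \eqref{eqn.frown} the total space $P_\ast \times_Q Q_\ast$ is the pullback of $p$ along the projection $P \times_Q Q_\ast \to P$, so the universal property of that pullback identifies maps over $P \times_Q Q_\ast$ into $P_\ast \times_Q Q_\ast$ with maps over $P$ into $P_\ast$. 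This matches $\psi^\sharp$ on the nose, and the two decompositions are mutually inverse by construction.

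The main obstacle I anticipate is the pullback computation in the second step: verifying carefully that pulling the total space of $q \trii r$ back along $\psi_1$ produces $\Delta_g R_\ast$, which requires tracking the counit and the distributivity pullback in \eqref{eqn.comp} and confirming that the resulting map to $R$ coincides with the $g$ extracted from the position analysis. Once this compatibility is secured, both matchings are instances of universal properties (of $\Pi_q$ and of pullbacks), so they are automatically inverse. Finally I would check naturality in $p$, $r$, and cartesian morphisms of $q$: precomposition by $\gamma \colon p' \to p$ sends the index $f$ to $f \circ \gamma_1$, matching the map $\gamma \frowny q \colon p' \frowny{f \circ \gamma_1} q \to p \frowny{f} q$, while cartesian $\phi \colon q \to q'$ uses the isomorphism $p \frowny{f} q \cong p \frowny{\phi_1 \circ f} q'$; since every identification employed is a universal property, naturality follows formally and I would indicate the relevant squares rather than grinding through them.
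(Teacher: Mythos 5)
Your proposal is correct and follows essentially the same route as the paper's proof: both unwind $\psi_1$ via the universal property of $\Pi_q$ (equivalently, the distributivity pullback) to extract the index $f\colon P\to Q$ and the map $P\times_Q Q_\ast\to R$, identify the pullback of the total space of $q\trii r$ along $\psi_1$ with $(P\times_Q Q_\ast)\times_R R_\ast$, and match direction components using that $P_\ast\times_Q Q_\ast$ is the pullback of $p$ along $P\times_Q Q_\ast\to P$. Your phrasing as a single chain of natural bijections (rather than the paper's explicit mutually inverse constructions) is a harmless stylistic streamlining, and your treatment of naturality matches the paper's.
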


\begin{proof}
A morphism of polynomials $\psi \colon p \to q \trii r$ consists of the dashed edges in \eqref{eqn.lefttotri},
\begin{equation}\label{eqn.lefttotri}
\begin{tikzcd}
P_\ast \ar{rrrr} & &[-20pt] &[-20pt] & P \dar[equals] \\
R_\ast \times_R (P \times_Q Q_\ast) \uar[dashed]{\psi^\sharp} \dar[dashed] \ar[dashed]{rr} \ar[phantom]{ddrr}[pos=0]{\lrcorner} & & P \times_Q Q_\ast \dar[dashed] \ar[dashed]{rr} \ar[phantom]{ddrr}[pos=0]{\lrcorner} & & P \dar[dashed]{\psi_1} \\
\bullet \ar{dd} \ar{rr} \ar[phantom]{ddrr}[pos=0]{\lrcorner} & & \bullet \ar{rr} \dar \ar[phantom]{ddrr}[pos=0]{\distpb} & & \Pi_q (R \times Q_\ast) \ar{dd} \\
& & R \times Q_\ast \ar{dl} \ar{dr} & & {} \\
R_\ast \rar & R & {} & Q_\ast \rar & Q 
\end{tikzcd}
\end{equation}
where we denote by $f$ the composite morphism $P \to Q$. A morphism of polynomials $\phi \colon p \frowny{f} q \to r$ consists of morphisms $\phi_1 \colon P \times_Q Q_\ast \to R$ and 
\[
\phi^\sharp \colon R_\ast \times_R (P \times_Q Q_\ast) \to P_\ast \times_Q Q_\ast \cong P_\ast \times_P (P \times_Q Q_\ast).
\]

Given $\psi$ as in \eqref{eqn.lefttotri}, define $\phi_1$ as the vertical composite map $P \times_Q Q_\ast\to R$ in the center of \eqref{eqn.lefttotri}. As $R_\ast \times_R (P \times_Q Q_\ast)$ maps to both $P_\ast$ and $P \times_Q Q_\ast$ commuting over $P$, it has an induced map $\phi^\sharp$ to $P_\ast \times Q_\ast$.

This is natural with respect to cartesian maps out of $q$ as by the composition of pullback squares, given a cartesian morphism $q \to q'$ we have that $P \times_Q Q_\ast \cong P \times_{Q'} Q'_\ast$. Given a morphism of polynomials $\gamma \colon p' \to p$, applying this construction to the composite $\psi \circ \gamma$ results in a morphism given by the maps 
\[
P' \times_Q Q_\ast \To{\gamma_1 \times_Q \id_{Q_\ast}} P \times_Q Q_\ast \To{\phi_1} R
\]
on positions by composition of pullback squares, and similarly
\[
R_\ast \times_R (P' \times_Q Q_\ast) \cong P' \times_P (R_\ast \times_R (P \times_Q Q_\ast)) \To{\id_{P'} \times_P \phi^\#} P' \times_P (P_\ast \times Q_\ast) \cong (P' \times_P P_\ast) \times Q_\ast \To{\gamma^\# \times \id_{Q_\ast}} P'_\ast \times Q_\ast
\]
on directions, which agrees definitionally with the composite morphism $p' \frowny{f \circ \gamma_1} q \to p \frowny{f} q \To{\phi} r$. Naturality in $r$ similarly follows from the definitions of composition of morphisms of polynomials and functoriality of $\tri$.

For the inverse construction, given $\phi \colon p \frowny{f} q \to r$, the map $\phi_1$ determines a map $P \times_Q Q_\ast \to R \times Q_\ast$, where $P\times_Q Q_\ast$ is the pullback of $q$ along $f$. This induces a map $\psi_1 \colon P \to \Pi_q (R \times Q_\ast)$ by the universal properties of products and distributivity pullbacks. The map $\phi^\sharp$ then projects to a morphism $\psi^\sharp \colon R_\ast \times_R (P \times_Q Q_\ast) \to P_\ast$ commuting over $P$. This construction of $\psi$ from $\phi$ is easily seen to invert that of $\phi$ from $\psi$ given above, from which the relevant naturality conditions follow automatically.
\end{proof}

We can now exploit the construction of the operation $\frown$ and general properties of monoidal products with indexed left-coclosures to show that $\tri$ preserves connected limits in the second variable.

\begin{corollary}\label{trilimit}
For $q$ a polynomial in a finite limit category $\E$, the functor $q \tri - \colon \poly_\E \to \poly_\E$ preserves connected limits and the functor $- \tri q \colon \poly_\E \to \poly_\E$ preserves connected limits of diagrams whose morphisms are cartesian.
\end{corollary}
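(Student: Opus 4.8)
The plan is to prove both preservation statements by the same Yoneda-style argument: fix the parameter $q$, and for an arbitrary polynomial $p$ rewrite $\Hom_{\poly_\E}(p, q \tri \lim_i r_i)$ and $\Hom_{\poly_\E}(p, (\lim_i r_i) \tri q)$ by transposing across the appropriate coclosure, commuting the resulting $\Hom$-functor past the limit, and transposing back. Since a limit of a diagram $d_{(-)}$ is exactly an object whose represented functor is $\lim_i \Hom(-, d_i)$, producing a natural isomorphism $\Hom_{\poly_\E}(p, q \tri \lim_i r_i) \cong \lim_i \Hom_{\poly_\E}(p, q \tri r_i)$ (and likewise on the other side) simultaneously shows that the limit on the right exists and is computed by the functor, which is precisely the preservation claim.

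For $q \tri -$, I would transpose using the indexed left-coclosure of \cref{indexedleftcoclosure}, which is defined for every pair of polynomials and is natural in its second variable for all morphisms. For a connected diagram $r_{(-)}$ admitting a limit, this gives
\[
\Hom_{\poly_\E}(p, q \tri \lim_i r_i) \cong \coprod_{f \colon P \to Q} \Hom_{\poly_\E}\bigl(p \frowny{f} q, \lim_i r_i\bigr) \cong \coprod_{f \colon P \to Q} \lim_i \Hom_{\poly_\E}\bigl(p \frowny{f} q, r_i\bigr),
\]
the second step because $\Hom$ preserves limits in its second variable. The crucial point is that the indexing set $\{f \colon P \to Q\}$ of the coproduct is independent of $i$, so since the diagram is connected I may invoke the elementary fact that connected limits commute with coproducts in $\Set$ to interchange $\coprod_f$ with $\lim_i$; transposing back through \cref{indexedleftcoclosure} then yields $\lim_i \Hom_{\poly_\E}(p, q \tri r_i)$. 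Everything is natural in $p$, so $q \tri \lim_i r_i$ is a limit of $q \tri r_{(-)}$.

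For $- \tri q$ I would instead transpose across the right-coclosure of \cref{rightcoclosure}, which would present $\lens{-}{q}$ as a left adjoint to $- \tri q$ and hence force preservation of all limits---were $\lens{p}{q}$ always a polynomial. Since it need not be, I would use the observation recorded after \cref{rightcoclosure} that the bijection $\Hom_{\poly_\E}(p, r \tri q) \cong \Hom(\lens{p}{q}, r)$ already holds with $\lens{p}{q}$ regarded merely as a morphism of $\E$, i.e.\ as an object of the ambient category of dependent lenses into which $\poly_\E$ embeds fully. Restricting to a connected diagram of \emph{cartesian} morphisms, its limit is the explicit pullback of \cref{hasconnectedlimits}; because that construction and its verifying universal property use only pullbacks and never exponentiability, the same object is a limit in the category of dependent lenses as well. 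Hence $\Hom(\lens{p}{q}, -)$ sends $\lim_i r_i$ to $\lim_i \Hom(\lens{p}{q}, r_i)$, and transposing back gives $\Hom_{\poly_\E}(p, (\lim_i r_i) \tri q) \cong \lim_i \Hom_{\poly_\E}(p, r_i \tri q)$ naturally in $p$.

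The main obstacle is precisely this asymmetry, which traces to the partiality of the right-coclosure: the left-coclosure $\frown$ is total and natural for all morphisms in the limited variable, so $q \tri -$ preserves connected limits with no cartesianness hypothesis, whereas $\lens{p}{q}$ can fail to be exponentiable, forcing me both to reinterpret \cref{rightcoclosure} inside the category of dependent lenses and to rely on the pullback-only nature of \cref{hasconnectedlimits} to know the connected \emph{cartesian} limit is preserved there---hence the restriction to cartesian diagrams for $- \tri q$. The only genuinely nontrivial categorical input is the commutation of connected limits with coproducts in $\Set$, which drives the first half.
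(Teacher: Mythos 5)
Your argument for $q \tri -$ is essentially the paper's own: transpose through the indexed left-coclosure of \cref{indexedleftcoclosure}, use that the indexing set $\{f \colon P \to Q\}$ is independent of $i$, and interchange the coproduct with the connected limit in $\smset$. For $- \tri q$, however, you take a genuinely different route. The paper stays with the left-coclosure: it writes $\Hom_{\poly_\E}(p,(\lim_i r_i)\tri q) \cong \coprod_{f\colon P \to \lim_i R_i}\Hom_{\poly_\E}(p \frowny{f} \lim_i r_i, q)$, uses the cartesianness of the diagram to identify $p \frowny{f} \lim_i r_i$ with $p \frowny{(\pi_i)_1 \circ f} r_i$ for each $i$, and then distributes the limit over the coproduct (here the index set itself is $\lim_i\Hom_\E(P,R_i)$, so connectedness is used a second time). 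You instead transpose through the right-coclosure, exploiting the paper's own remark after \cref{rightcoclosure} that the bijection $\Hom_{\poly_\E}(p, r \tri q)\cong\Hom(\lens{p}{q},r)$ holds with $\lens{p}{q}$ viewed merely as an object of the ambient category of dependent lenses, and then observe that the limit constructed in \cref{hasconnectedlimits} is a limit against \emph{all} dependent lenses, not just polynomials, since that proof never invokes exponentiability of the test object. Both steps check out, and the needed naturality in $r$ is asserted in \cref{rightcoclosure}. What your route buys is a cleaner second half---no interchange of a limit with a coproduct over a varying index set---at the cost of stepping outside $\poly_\E$ into the dependent-lens category; what the paper's route buys is uniformity, deriving both halves from the single total operation $\frown$ and facts about $\smset$. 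Your diagnosis of where the cartesianness hypothesis enters (existence of the limit via \cref{hasconnectedlimits}, and its validity in the larger category) is also accurate.
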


\begin{proof}
The first claim is in fact the case for any functor with an indexed left adjoint as coproducts commute with connected limits in the category of sets. This is because by the Yoneda lemma, for any connected diagram $r_{(-)}$ in $\poly_\E$ it suffices to observe the following composite of isomorphisms natural in the polynomial $p$.
\begin{multline*}
\Hom_{\poly_\E}(p,q \tri \lim_i r_i) 
\cong \coprod_{f \colon P \to Q} \Hom_{\poly_\E}(p \frowny{f} q, \lim_i r_i) 
\cong \coprod_{f \colon P \to Q} \lim_i \Hom_{\poly_\E}(p \frowny{f} q, r_i)
\\
\cong \lim_i \coprod_{f \colon P \to Q} \Hom_{\poly_\E}(p \frowny{f} q, r_i) 
\cong \lim_i \Hom_{\poly_\E}(p,q \tri r_i) 
\end{multline*}

For the second claim, first observe that for a connected diagram $r_{(-)}$ whose morphisms are cartesian we have
\[
\Hom_{\poly_\E}(p,(\lim_i r_i) \tri q) 
\cong \coprod_{f \colon P \to \lim_i R_i} \Hom_{\poly_\E}(p \frowny{f} \lim_i r_i, q)
\]
natural in $p$. As the construction of $p \frowny{f} \lim_i r_i$ is invariant under the cartesian morphisms $\lim_i r_i \to r_{i_0}$ for each choice of $i_0$, we have 
\[
p \frowny{f} \lim_i r_i \cong p \frowny{(\pi_{i_0})_1 \circ f} r_{i_0}.
\]
Based on this, and the distributivity of limits over coproducts in the category of sets, $\Hom_{\poly_\E}(p,(\lim_i r_i) \tri q)$ is furthermore isomorphic to 
\[
\lim_i \coprod_{f \colon P \to R_i} \Hom_{\poly_\E}(p \frowny{(\pi_i)_1 \circ f} r_i, q) 
\cong \lim_i \Hom_{\poly_\E}(p,r_i \tri q)
\cong \Hom_{\poly_\E}(p,\lim_i(r_i \tri q)),
\]
completing the proof.
\end{proof}

\begin{remark}
The fact that $\tri$ preserves connected limits of cartesian morphisms could be proven directly as the cartesian projection morphisms fit easily into the composition diagrams defining the composition product $\tri$ in \cref{tri}, but using the left coclosure provides a convenient syntax for expressing the same ideas. That $\tri$ preserves arbitrary connected limits in the left variable would be much more tedious to prove without the indexed left-coclosure formalism.
\end{remark}

\chapter{Comonoids and Bicomodules in $\poly_E$}\label{chap.comonoid}

As in any monoidal category, one can ask what mathematical structures are described by its monoids and comonoids. While we will not discuss monoids in $\poly_\E$ at this time, $\tri$-comonoids in $\poly_\E$ correspond to certain internal categories in $\E$ with their homomorphisms corresponding to internal cofunctors. Meanwhile, coalgebras for a comonoid when regarded as an endofunctor on $\E$ correspond to internal copresheaves (a.k.a. internal discrete opfibrations) and bicomodules between comonoids allow us to reconstruct the bicategory of typed polynomials, hence generalizing the results of \cite{kock2012polynomial} and \cite{weber2015polynomials}.

\section{Comonoids are internal categories}

While in classical mathematics comonoids receive far less attention than monoids, in the monoidal category $(\poly_\E,\yon,\tri)$ they recover an unexpected category: that of internal categories and cofunctors in $\E$. This surprising correspondence was first observed in the case when $\E = \smset$ by Ahman and Uustalu in \cite{ahman2016directed,ahman2017taking}, where they show that comonoids and comonoid homomorphisms in $\poly_{\smset}$ correspond to categories and cofunctors. Previously in \cite[Section 7]{ahman2014when}, ``directed containers'' were defined in a category $\E$ with pullbacks, which can be observed to agree definitionally with categories internal to $\E$ (see \cref{def.internal_cat}) whose source morphism is exponentiable, and it was suggested that these directed containers can be interpreted as polynomial comonads. We now complete this story by proving that when $\E$ has all finite limits (so that polynomials induce endofunctors on $\E$ itself rather than only its slice categories), $\tri$-comonoids and comonoid homomorphisms in $\poly_\E$ correspond to internal categories with exponentiable source and internal cofunctors between them.

\begin{definition}\label{comonoid}
A $\tri$-comonoid in $\poly_\E$ is a polynomial $c$ equipped with maps $\epsilon \colon c \to \yon$ (the \emph{counit}) and $\delta \colon c \to c \trii c$ (the \emph{comultiplication}) such that the counit and coassociativity diagrams in \eqref{eqn.comon} commute.
\begin{equation}\label{eqn.comon}
\begin{tikzcd}[column sep=large]
\yon \trii c \ar{dr}[swap]{\cong} & c \trii c \lar[swap]{\epsilon \trii \id_c} \rar{\id_c \trii \epsilon} & c \trii \yon \ar{dl}{\cong} \\
& c \uar{\delta}
\end{tikzcd}\qquad\quad\begin{tikzcd}[row sep=tiny, column sep=large]
(c \trii c) \trii c \ar{dd}[swap]{\cong} & c \trii c \lar[swap]{\delta \trii \id_c} \\
& &[-10pt] c \ar{ul}[swap]{\delta} \ar{dl}{\delta} \\
c \trii (c \trii c) & c \trii c \lar{\id_c \trii \delta}
\end{tikzcd}
\end{equation}
A comonoid homomorphism $c \to c'$ is a morphism of the underlying polynomials which commutes with the respective counit and comultiplication maps. We write $\comon(\poly_\E)$ for the category of comonoids and homomorphisms.
\end{definition}

\begin{example}
For any polynomial $p$, the polynomial $p_\ast$ from \cref{pstar} has a canonical comonoid structure, where the counit is obtained by transposing the isomorphism $p \to p \tri \yon$ (using the indexed left coclosure) into a morphism $p \frowny{\id} p \to \yon$. The comultiplication 
\[
p \frowny{\id} p\too \left(p \frowny{\id} p\right) \tri \left(p \frowny{\id} p\right)
\]
is the transpose of the composite morphism 
\[
p \to p \tri \left(p \frowny{\id} p\right) \to p \tri \left(p \frowny{\id} p\right) \tri \left(p \frowny{\id} p\right)
\]
given by twice applying the transpose of the identity on $p \frowny{\id} p$ to a map $p \to p \tri (p \frowny{\id} p)$.
\end{example}

\begin{example}
For a polynomial $p$ such that $\lens{p}{p}$ is exponentiable, it too carries a canonical comonoid structure, where the counit is obtained by transposing the isomorphism $p \to \yon \tri p$ (using the right coclosure) into a morphism $\lens{p}{p} \to \yon$. The comultiplication $\lens{p}{p} \to \lens{p}{p} \tri \lens{p}{p}$ is the transpose of the morphism
\[
p \to \lens{p}{p} \tri p \to \lens{p}{p} \tri \lens{p}{p} \tri p
\]
given by twice applying the transpose of the identity on $\lens{p}{p}$ to a map $p \to \lens{p}{p} \tri p$.
\end{example}

While \cref{comonoid} describes a structure in the category $\E$ rather opaquely, we will show that in fact it is the same as that of an internal category in $\E$ with one additional condition.

\begin{definition}\label{def.internal_cat}
A \emph{category $\C$ internal to $\E$} consists of a diagram in $\E$ of the form 
\[
\begin{tikzcd}
C_1 \rar[shift left=2]{s} \rar[shift right=2,swap]{t} & C_0
\end{tikzcd}
\]
equipped with an \emph{identity} map $i \colon C_0 \to C_1$ satisfying $s \circ i = \id_{C_0} = t \circ i$ and a \emph{composition} map $k \colon C_1 \pb{t}{s} C_1 \to C_1$ satisfying $s \circ k = s \circ \pi_1$ and $t \circ k = t \circ \pi_2$, such that furthermore the unit and associativity diagrams in \eqref{eqn.internalcat} commute.
\begin{equation}\label{eqn.internalcat}
\begin{tikzcd}[column sep=huge]
C_0 \pb{\id}{s} C_1 \ar{dr}[swap]{\cong} \rar{i \times_{C_0} \id_{C_1}} & C_1 \pb{t}{\id} C_1 \dar{k} & C_1 \;\pb{t}{s} C_0 \ar{dl}{\cong} \lar[swap]{\id_{C_1} \times_{C_0} i} \\
& C_1 
\end{tikzcd}
\end{equation}
\[
\begin{tikzcd}[row sep=tiny, column sep=huge]
(C_1 \pb{t}{s} C_1) \pb{t \circ \pi_2}{s} C_1 \ar{dd}[swap]{\cong} \rar{k \times_{C_0} \id_{C_1}} & C_1 \pb{t}{s} C_1 \ar{dr}{k} \\
& &[-10pt] C_1 \\
C_1 \;\pb{t}{s \circ \pi_1} (C_1 \;\pb{t}{s} C_1) \rar[swap]{\id_{C_1} \times_{C_0} k} & C_1 \pb{t}{s} C_1 \ar{ur}[swap]{k} 
\end{tikzcd}
\]
\end{definition}

It is possible, albeit tedious and involving diagrams larger than we wish to include here, to prove directly that, in any finite limit category, comonoids in $\poly_\E$ are in bijection with internal categories in $\E$ whose source morphism is exponentiable (i.e.\ a polynomial). Instead, to simplify the proof of this claim we make further use of our technique of reducing results about polynomials in any finite limit category to results about polynomials in presheaf categories.

\begin{lemma}\label{preservereflect}
For a finite limit category $\E$ and fully faithful dense functor $F \colon \A \to \E$, the corresponding singular functor $F^\ast \colon \E \to \Ah$ preserves and reflects both internal categories and $\tri$-comonoids. In other words, for diagrams in $\E$ of the form  
\[
\begin{tikzcd}
C_1 \rar[shift left=2]{s} \rar[shift right=2,swap]{t} & C_0
\end{tikzcd}
\qqand C_\ast \To{c} C
\]
where $c$ is exponentiable, there is a bijection between internal category structures on $(C_1,C_0,s,t)$ in $\E$ and internal category structures on $\left(F^\ast(C_1),F^\ast(C_0),F^\ast(s),F^\ast(t)\right)$, and there is also a bijection between $\tri$-comonoid structures on $c$ in $\poly_\E$ and $\tri$-comonoid structures on $F^\ast(c)$ in $\poly_\Ah$.
\end{lemma}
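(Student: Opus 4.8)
The plan is to observe that both internal categories and $\tri$-comonoids are instances of algebraic structure specified entirely by morphisms whose (co)domains are built out of finite limits, subject to equations between composites of such morphisms and the canonical comparison maps between those finite limits. A functor that is fully faithful and preserves finite limits induces a bijection on structures of this shape: full faithfulness gives a bijection on the underlying data, and faithfulness together with functoriality and limit-preservation transports each defining equation across in both directions. Since $F^\ast$ is fully faithful (by density, \cite{ulmer1968dense}) and preserves finite limits, and since the induced functor $\poly_{F^\ast}$ is a fully faithful strong monoidal functor by the proof of \cref{monoidalcat}, I would deduce both halves of the lemma from this single principle.

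For the internal-category statement I would argue as follows. An internal category structure on $(C_1,C_0,s,t)$ is the data of an identity map $i\colon C_0\to C_1$ and a composition map $k\colon C_1\pb{t}{s}C_1\to C_1$ satisfying the equations in \cref{def.internal_cat} together with the unit and associativity diagrams in \eqref{eqn.internalcat}. Because $F^\ast$ is fully faithful it induces bijections $\Hom_\E(X,Y)\cong\Hom_\Ah(F^\ast X,F^\ast Y)$, and because $F^\ast$ preserves finite limits it carries the pullback $C_1\pb{t}{s}C_1$ to $F^\ast C_1\pb{F^\ast t}{F^\ast s}F^\ast C_1$ and sends every projection and coherence isomorphism appearing in \eqref{eqn.internalcat} to its counterpart over $F^\ast$. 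Hence the choices of $i$ and $k$ correspond bijectively to an identity and composition map on $(F^\ast C_1,F^\ast C_0,F^\ast s,F^\ast t)$, and since $F^\ast$ is faithful and functorial each defining equation holds in $\E$ exactly when its image holds in $\Ah$, giving the desired bijection of internal category structures.

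For the $\tri$-comonoid statement I would apply the same reasoning to $\poly_{F^\ast}\colon\poly_\E\to\poly_\Ah$. A $\tri$-comonoid structure on $c$ is a counit $\epsilon\colon c\to\yon$ and comultiplication $\delta\colon c\to c\trii c$ satisfying the diagrams in \eqref{eqn.comon}; note these diagrams refer only to $\tri$, $\yon$, and the coherence isomorphisms, so no limits in $\poly_\E$ itself are required. As $\poly_{F^\ast}$ is fully faithful and carries $\yon$ and $c\trii c$ to $\yon$ and $F^\ast c\trii F^\ast c$ up to coherent isomorphism, the maps $\epsilon$ and $\delta$ correspond bijectively to a counit and comultiplication on $F^\ast c$; and since $\poly_{F^\ast}$ preserves the unitors and associator up to these same isomorphisms, the counit and coassociativity diagrams commute in $\poly_\E$ precisely when their images commute in $\poly_\Ah$. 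This yields the bijection of $\tri$-comonoid structures.

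The argument is essentially formal, so the work is bookkeeping rather than genuine difficulty. The main point requiring care will be verifying that $F^\ast$, respectively $\poly_{F^\ast}$, sends each canonical comparison map — the pullback projections and coherence isomorphisms such as $C_0\pb{\id}{s}C_1\cong C_1$ in \eqref{eqn.internalcat}, and the unitors and associator in \eqref{eqn.comon} — to the corresponding canonical map in the target, so that the two families of defining diagrams genuinely align under the bijection on data. This alignment is guaranteed by preservation of finite limits and by strong monoidality respectively, but it is the hinge on which the equivalence of the axioms turns, and I would state it explicitly rather than leave it implicit.
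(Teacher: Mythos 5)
Your proposal is correct and takes essentially the same approach as the paper, whose proof is a single sentence observing that $F^\ast$ is fully faithful and preserves and reflects finite limits and exponentials; you have simply spelled out the transport-of-structure argument that the paper leaves implicit.
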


\begin{proof}
This follows immediately as $F^\ast$ is fully faithful and also preserves and reflects finite limits (by definition and finite completeness of $\E$) and exponentials (by \cref{densepi}).
\end{proof}

Based on this, we can now relate $\tri$-comonoid and internal category structures for any finite limit category using only the proof for presheaf categories, which more closely resembles the proof in the case of $\poly_\smset$ from \cite{ahman2016directed}.

\begin{theorem}\label{comonoidcategory}
For a polynomial $C_\ast \To{c} C$ in a finite limit category $\E$, there is a bijection between $\tri$-comonoid structures on $c$ and categories internal to $\E$ with $c$ as their source.
\end{theorem}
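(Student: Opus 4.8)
The plan is to use the reduction-to-presheaves technique once more, leaning on \cref{preservereflect}. Fix a fully faithful dense functor $F\colon \A \to \E$ (the identity if nothing better is available) and its singular functor $F^\ast\colon \E \to \Ah$, which by \cref{densepi} is fully faithful and preserves and reflects finite limits and exponentials, and hence by \cref{preservereflect} sets up bijections between $\tri$-comonoid structures on $c$ and on $F^\ast(c)$, and between internal category structures on $(C_\ast \To{c} C)$ and on its image under $F^\ast$. It therefore suffices to establish the desired bijection in the presheaf category $\Ah$, where every morphism is exponentiable and element-wise reasoning is available, and then transport it along these bijections.

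In $\Ah$ I would prove the bijection directly, following Ahman--Uustalu \cite{ahman2016directed}. Fixing the polynomial $c$, so that the object of objects $C_0 = C$, the object of morphisms $C_1 = C_\ast$, and the source $s = c$ are already determined, a $\tri$-comonoid structure $(\epsilon,\delta)$ is to be matched with the remaining internal-category data $(t,i,k)$. The counit $\epsilon\colon c \to \yon$ has as its direction component a section $i\colon C \to C_\ast$ of $c$, which serves as the identity-assigning map. The comultiplication $\delta\colon c \to c \trii c$ carries two pieces of information: its position component records, for each position and each direction out of it, a further position, yielding the target map $t\colon C_\ast \to C$; and its direction component assigns to each composable pair of directions a single direction, yielding the composition map $k\colon C_\ast \pb{t}{s} C_\ast \to C_\ast$. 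Reading \eqref{eqn.comon} element-wise through the representables $\yon(a)$ (via the Yoneda lemma) then shows that the counit triangle and coassociativity pentagon for $(\epsilon,\delta)$ hold if and only if the unit and associativity diagrams \eqref{eqn.internalcat} hold for $(t,i,k)$, and these two translations are mutually inverse. This yields the bijection in $\Ah$.

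Finally I would transport the result: given a $\tri$-comonoid on $c$ in $\poly_\E$, apply $F^\ast$, pass through the presheaf bijection to an internal category on $(F^\ast C_\ast, F^\ast C)$, and reflect it back to an internal category on $(C_\ast, C)$ using \cref{preservereflect}; the reverse direction and the fact that the two assignments are mutually inverse follow because $F^\ast$ is faithful and reflects the relevant structure. The main obstacle is not any single calculation but ensuring compatibility of the three bijections, i.e.\ that the data $(t,i,k)$ produced in $\Ah$ genuinely lie in the image of $F^\ast$ (so that they descend to structure maps in $\E$) and that the axioms, verified pointwise in $\Ah$, are reflected to $\E$; both hinge precisely on $F^\ast$ being fully faithful and reflecting finite limits and exponentials, which is exactly the content packaged in \cref{preservereflect}. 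The bulk of the genuine work is thus the element-wise verification in $\Ah$, where care is needed to read off both $t$ and $k$ from the single map $\delta$ and to match the comonoid coherence conditions with the internal-category ones.
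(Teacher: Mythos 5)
Your proposal follows essentially the same route as the paper's proof: reduce to a presheaf category via \cref{preservereflect}, then identify the counit's direction component with the identity map $i$, the comultiplication's position component with the target $t$, and its direction component with the composition $k$, matching the comonoid axioms to the internal-category axioms element-wise. The correspondence of axioms you defer (which of the counit/coassociativity equations, on positions versus directions, yields the target, unit, and associativity laws respectively) is exactly the content the paper works out in detail, and your handling of the transport back to $\E$ via full faithfulness is the same as the paper's.
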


\begin{proof}
By \cref{preservereflect}, it suffices to provide this bijection in the case when $\E$ is a presheaf category $\Ah$, as for any fully faithful dense functor $F \colon \A \to \E$ this bijection in $\Ah$ implies the same for $\E$ (and $F$ can always be taken to be the Yoneda embedding, though smaller dense full subcategories of $\E$ are often available).

We therefore start with a $\tri$-comonoid $c \colon C_\ast \to C$ in the category $\poly_\Ah$ and show that its structure is in 1-to-1 correspondence with an internal category structure with the source map $C_\ast \to C$ given by $c$. In doing so we will make extensive use of the notation established in \cref{presheafcomp}; in particular, for $a$ an object of $\A$ and an element $x \in C_a$ the presheaf $c[x]$ is the pullback of $x \colon \yon(a) \to C$ along $c$.

A counit map $\epsilon \colon c \to \yon$ has the form of \eqref{eqn.counit}.
\begin{equation}\label{eqn.counit}
\begin{tikzcd}
C_\ast \dar[swap]{c} &[-5pt] C \lar[swap]{\epsilon^\sharp} \rar \dar[equals] \ar[phantom]{dr}[pos=0]{\lrcorner} &[-5pt] 1 \dar[equals] \\
C & C \lar[equals] \rar{\epsilon_1} & 1
\end{tikzcd}
\end{equation}
The data of the counit $\epsilon$ consists only of the morphism $\epsilon^\sharp \colon C_\ast \from C$ commuting over $C$, which corresponds to the identity map $i \colon C_\ast \from C$ of an internal category structure satisfying $c \circ i = \id_C$. In particular, this counit/identity morphism amounts to a natural choice of element $i(x) \in c[x]_a$ for all elements $x \in C_a$.

A comultiplication map $\delta \colon c \to c \trii c$ has the form of diagram \eqref{eqn.comult}, which takes place in $\smset$ and is natural in the object $a$ in $\A$, using the computation of $c \trii c$ from \cref{presheafcomp}.
\begin{equation}\label{eqn.comult}
\begin{tikzcd}
(C_\ast)_a \dar[swap]{c} &[-5pt] \coprod\limits_{x \in C_a} \coprod\limits_{y \in c[x]_a} c[\delta_1(x)(y)]_a \lar[swap]{\delta^\sharp} \rar \dar \ar[phantom]{dr}[pos=-.2]{\lrcorner} &[-5pt] \coprod\limits_{x \in C_a} \coprod\limits_{f \colon c[x] \to C} \coprod\limits_{y \in c[x]_a} c[f(y)]_a \dar \\
C_a & C_a \lar[equals] \rar{\delta_1} & \coprod\limits_{x \in C_a} \Hom_\Ah\left(c[x], C\right)
\end{tikzcd}
\end{equation}
Note that by the right counit equation for $\delta,\epsilon$ on positions, the composite
\[
C_a \To{\delta_1} \coprod_{x \in C_a} \Hom_\Ah\left(c[x], C\right) \To{(\id_c \trii \epsilon)_1} \coprod_{x \in C_a} \Hom_\Ah\left(c[x], 1\right) \cong C_a
\]
is the identity, which ensures that all of the functions in the right square of \eqref{eqn.comult} act as the identity on $x \in C_a$. 

It follows that the map $\delta_1$ amounts to a choice of function $\delta_1(x) \colon c[x] \to C$ for all $x$ in $C_a$, which by naturality in $a$ are precisely the data of a morphism $t \colon C_\ast \to C$ in $\Ah$, which we take to be the target map for the corresponding internal category structure. Based on this correspondence, we observe that there is an isomorphism
\[
\coprod_{x \in C_a} \coprod_{y \in c[x]_a} c[\delta_1(x)(y)]_a \cong \left(C_\ast \pb{t}{c} C_\ast\right)_a
\]
commuting over $C_a$ with respect to the map 
\[
c \circ \pi_1 \colon \left(C_\ast \pb{t}{c} C_\ast\right)_a \to C_a.
\]
Therefore, the morphism $\delta^\sharp$ commuting over $C$ corresponds to the composition map $k \colon C_\ast \pb{t}{c} C_\ast \to C_\ast$ satisfying $c \circ k = c \circ \pi_1$.

It now remains only to check that given these equivalent structures the counit and coassociativity equations for $\tri$-comonoids correspond to the target, unit, and associativity equations for internal categories.

On positions, the left counit equation for $\delta,\epsilon$ states that the map
\[
C_a \To{\delta_1} \coprod_{x \in C_a} \Hom_\Ah\left(c[x], C\right) \To{(\epsilon \trii \id_c)_1} \coprod_{*_a \in 1_a} \Hom_\Ah\left(\yon(a), C\right) \cong C_a,
\]
sending an element $x \in C_a$ to $\delta_1(x)(\epsilon^\sharp(x)) \in C_a$ is the identity on $C_a$ (recall that $\yon[*_a] = \yon(a)$ as in \cref{presheafcomp}). In the corresponding internal category, this equation is equivalent to that of $t \circ i = \id_C$.

Using the correspondence of $\delta^\sharp$ to the composition map $k \colon \from C_\ast \pb{t}{c} C_\ast \to C_\ast$ and $\epsilon^\ast$ to the identities map $i \colon C_\ast \from C$, it is then straightforward to check that the maps $\left((\epsilon \trii \id_c) \circ \delta\right)^\ast$ and $\left((\id_c \trii \epsilon) \circ \delta\right)^\sharp$ correspond to the composites
\[
C_\ast \cong C \pb{\id}{c} C_\ast \To{i \times_C \id_{C_\ast}} C_\ast \pb{t}{c} C_\ast \To{k} C_\ast
\qqand 
C_\ast \cong C_\ast \pb{t}{\id} C \To{\id_{C_\ast} \times_C i} C_\ast \pb{t}{c} C_\ast \To{k} C_\ast.
\]
Therefore the left and right unit laws on directions for $\epsilon,\delta$ which assert that these maps are identities are equivalent to the left and right unit laws in the corresponding internal category.

The associativity equation for $\delta$ on positions states that the diagram in \eqref{eqn.presheafcoassoc} commutes for all objects $a$ in $\A$,
\begin{equation}\label{eqn.presheafcoassoc}
\begin{tikzcd}
& \coprod\limits_{x \in C_a} \Hom_\Ah\left(c[x], C\right) \rar{(\delta \trii \id_c)_1} &[20pt] \coprod\limits_{\substack{x \in C_a \\ g \colon c[x] \to C}} \Hom_\Ah\left(c[x] \pb{g}{c} C_\ast, C\right) \ar[shorten >=4,shorten <=-10, shift left=5]{dd}{\alpha} \\
C_a \ar{ur}{\delta_1} \ar[shorten >= 5]{dr}[swap]{\delta_1} \\
& \coprod\limits_{x \in C_a} \Hom_\Ah\left(c[x], C\right) \rar{(\id_c \trii \delta)_1} & \coprod\limits_{x \in C_a} \Hom_\Ah\left(c[x], \Pi_c (C \times C_\ast)\right) 
\end{tikzcd}
\end{equation}
where the map $(\delta \trii \id_c)_1$ sends $(x,f)$ to $(x,\delta_1(x), f \circ \delta^\sharp\restrict{x})$ (where $\delta^\sharp\restrict{x}$ is the appropriate restriction of $\delta^\sharp$) and the map $\alpha^{-1} \circ (\id_c \trii \delta)_1$ sends $(x,f)$ to $(x,f, f \times_C t)$. The associativity of this diagram therefore means that for all $x \in C_a$, 
\[
(x,\delta_1(x),\delta_1(x) \circ \delta^\sharp\restrict{x}) = (x,\delta_1(x), \delta_1(x) \times_C t).
\]
Assembling these equations together for all choices of $a$ and $x$ results in the equation 
\[
t \circ \delta^\sharp = t \times_C t,
\]
as maps $C_\ast \pb{t}{c} C_\ast \to C_\ast$; since the map $t \times_C t$ is precisely $t \circ \pi_2$, this equation is equivalent to the target equation $t \circ k = t \circ \pi_2$ for internal categories.

Finally, by unwinding the definitions of all the relevant maps in the same manner as done for the previous equations, it is straightforward to check that the coassociativity equation for $\delta$ on directions, namely that up to the associator isomorphism
\[
\left((\delta \trii \id_c) \circ \delta\right)^\sharp = \left((\id_c \trii \delta) \circ \delta\right)^\sharp,
\]
is equivalent to the associativity equation for internal categories.
\end{proof}

\begin{remark}
In $\poly_{\smset}$, \cref{comonoidcategory} recovers the result of \cite{ahman2016directed} that polynomial comonads agree with ordinary categories, because every function is exponentiable (i.e.\ counts as a polynomial $c$). 

However, note that the polynomial comonad perspective on categories differs from the usual definition of category, in terms of Hom-sets or graphs, in that it presents a category as a set of objects each equipped with a set of outgoing arrows. Not just the identity and composite arrows but also the target objects of the arrows themselves are treated as algebraic (or more accurately, coalgebraic) structure on the outgoing-arrow sets. 
\end{remark}

\begin{example}\label{discretecomonoid}
For any object $C$ in a finite limit category $\E$, the polynomial $C\yon$ carries a comonoid structure given by the cartesian morphisms $C\yon \to \yon$ and $C\yon \to C\yon \tri C\yon \cong (C \times C)\yon$ arising from the projection and diagonal maps. The corresponding internal category is ``discrete'' in the sense that the source, target, identity, and composition maps are all isomorphisms, and when $\E$ is the category of sets these comonoids are precisely the discrete categories.
\end{example}

\begin{example}
For a polynomial $p$, the comonoid $p_\ast$ resembles an ``indiscrete'' internal category on each fiber of $p$ separately. For polynomials in $\smset$, this category is precisely the disjoint union over $I \in P$ of the indiscrete categories on the sets $p[I]$.

The comonoid $c\coloneqq\lens{p}{p}$ in $\poly_{\smset}$ is the category whose objects are the positions of $p$ and whose morphisms $I \to J$ are given by functions $p[J] \to p[I]$ in $\smset$. Thus $c$ models the opposite of the full subcategory of $\smset$ spanned by the fibers $p[I]$ for all $I \in P$.
\end{example}

\begin{example}
In $\poly_{\smcat}$, comonoids are thus precisely the strict double categories whose source functor is Conduch\'e.\footnote{Recovering pseudo-double categories would require defining a 2-category structure on $\poly_{\smcat}$, which is beyond the scope of this paper.} While certainly not every double category has this property, it is the case for various double categories commonly studied: every equipment---double category in which the vertical arrows can be regarded as horizontal arrows in either direction---has this structure as does the double category commutative squares in a category. Concretely, the Conduch\'e-source condition states that for any factorization of the vertical source arrow of a square in the double category, there is a compatible vertical factorization of the square which is unique up to morphisms between such factorizations.
\end{example}

While the correspondence in \cref{comonoidcategory} demonstrates that polynomial comonoids and internal categories with exponentiable source map fundamentally contain the same information, the original result of \cite{ahman2016directed} (as summarized in \cite[Theorem 2.2.5]{spivak2021functorial}) is that not only do these structures agree on their data but also they agree on morphisms between them. However, the relevant morphisms between categories are not functors but rather cofunctors, whose internal analogue we now describe. Internal cofunctors in a category with pullbacks were first defined in \cite[Definition 12]{clarke2020internal}, and in this case they also specialize \cite[Definition 4.2.1]{aguiar1997internal}.

\begin{definition}
For categories $\C,\C'$ internal to $\E$, an internal \emph{cofunctor} $\phi$ from $\C$ to $\C'$ is an isomorphism class of commuting diagrams of the form in \eqref{eqn.cofunctor},%
\footnote{The isomorphisms referenced here are between choices of the pullback $C_0 \times_{C'_0} C'_1$.}
\begin{equation}\label{eqn.cofunctor}
\begin{tikzcd}
C_1 \dar[swap]{s} & C_0 \times_{C'_0} C'_1 \lar[swap]{\phi^\sharp} \rar \dar \ar[phantom]{dr}[pos=0]{\lrcorner} &[-10pt] C'_1 \dar{s'} \\
C_0 & C_0 \lar[equals] \rar{\phi_1} & C'_0
\end{tikzcd}
\end{equation}
such that the diagrams in \eqref{eqn.cofunctoreqs}, with notation as in \cref{def.internal_cat}, commute:
\begin{equation}\label{eqn.cofunctoreqs}
\begin{gathered}
\begin{tikzcd}
C_1 & C_0 \times_{C'_0} C'_1 \lar[swap]{\phi^\sharp} \\
C_0 \uar{i} \rar[swap]{\cong} & C_0 \times_{C'_0} C'_0 \uar[swap]{\id_{C_0} \times_{C'_0} i'}
\end{tikzcd}\qquad\quad\begin{tikzcd}
C_1 \dar[swap]{t} & C_0 \times_{C'_0} C'_1 \lar[swap]{\phi^\sharp} \rar &[-10pt] C'_1 \dar{t'} \\
C_0 & C_0 \lar[equals] \rar{\phi_1} & C'_0
\end{tikzcd}
\\
\begin{tikzcd}[column sep=45pt]
C_1 \times_{C_0} C_1 \dar[swap]{k} & C_1 \times_{C_0} (C_0 \times_{C'_0} C'_1) \lar[swap]{\id_{C_1} \times_{C_0} \phi^\sharp} \cong C_1 \times_{C'_0} C'_1 & C_0 \times_{C'_0} C'_1 \times_{C'_0} C'_1 \lar[swap]{\phi^\sharp \times_{C'_0} \id_{C'_1}} \dar{\id_{C_0} \times_{C'_0} k'} \\
C_1 & &  C_0 \times_{C'_0} C'_1 \ar{ll}{\phi^\sharp}  
\end{tikzcd}
\end{gathered}
\end{equation}
\end{definition}

The data of an internal cofunctor in \eqref{eqn.cofunctor} is clearly the same as that of a morphism as in \eqref{eqn.polymap} of the corresponding polynomials (from \cref{comonoidcategory}), but the equations in \eqref{eqn.cofunctoreqs} are more straightforward than the counit and comultiplication equations for polynomial comonoid homomorphisms when unwound into equations in the category $\E$, just as the equations governing internal categories are simpler to check than those of polynomial comonoids. If the two sets of equations are shown to be equivalent, identities and composites of internal cofunctors can be defined to be the same as identities and composites of comonoid homomorphisms, so that the bijection on objects from \cref{comonoidcategory} extends to an isomorphism of categories.

\begin{corollary}\label{cofunctorequivalence}
There is an isomorphism of categories between $\comon(\poly_\E)$ and a category whose objects are categories $\C$ internal to $\E$ with exponentiable source maps and whose morphisms are internal cofunctors. 
\end{corollary}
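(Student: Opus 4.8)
The plan is to promote the object-level bijection of \cref{comonoidcategory} to a full isomorphism of categories by matching morphisms and then defining composition by transport. First I would note that the underlying data of the two notions of morphism already agree: the diagram \eqref{eqn.cofunctor} defining an internal cofunctor (whose vertical legs are the source maps $s$ and $s'$) is precisely a diagram of the shape \eqref{eqn.polymap} defining a morphism of the underlying polynomials $c$ and $c'$, and in both cases one takes an isomorphism class over the choice of the displayed pullback. Thus the only substantive task is to show that, under this identification of data, the comonoid-homomorphism conditions (commuting with the counits and comultiplications of \eqref{eqn.comon}) are equivalent to the internal-cofunctor equations \eqref{eqn.cofunctoreqs}.

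As in \cref{comonoidcategory}, I would reduce this equivalence to the case of a presheaf category. The reasoning behind \cref{preservereflect} applies verbatim to morphisms: a comonoid-homomorphism equation is an equality of morphisms of polynomials assembled from $\delta,\delta',\epsilon,\epsilon'$ via $\tri$, and a cofunctor equation is an equality of morphisms assembled from the structure maps of the two internal categories via finite limits; since the singular functor $F^\ast$ is fully faithful and preserves and reflects finite limits and exponentials (\cref{densepi}), it preserves and reflects the validity of each such equation on a fixed choice of underlying data. It therefore suffices to establish the equivalence when $\E$ is a presheaf category $\Ah$, where the element-wise reasoning used in \cref{comonoidcategory} is available.

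In the presheaf case I would unwind both conditions element-wise using the dictionary built in \cref{comonoidcategory}: the counit $\epsilon$ corresponds to the identity map $i$, the positions $\delta_1$ of the comultiplication to the target map $t$, and the directions $\delta^\sharp$ to the composition map $k$. Matching these, preservation of the counit becomes exactly the identity-preservation diagram of \eqref{eqn.cofunctoreqs}, while preservation of the comultiplication splits into its positions component, giving the target-preservation diagram, and its directions component, giving the composition-preservation diagram. Each translation is parallel to the way the comonoid counit and coassociativity equations were matched against the internal-category unit and associativity equations in \cref{comonoidcategory}.

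Finally, once the two equation-sets are identified I would \emph{define} identities and composites of internal cofunctors to be those of the corresponding comonoid homomorphisms. The category axioms then hold for free, being inherited from $\comon(\poly_\E)$, and the induced assignment is a functor which is bijective on objects by \cref{comonoidcategory} and bijective on hom-sets by the equivalence of equations just proved, hence an isomorphism of categories. I expect the main obstacle to be the directions-level step in the presheaf case, namely checking that preservation of the comultiplication on directions coincides with the composition-preservation diagram of \eqref{eqn.cofunctoreqs}: this is the one equation whose unwinding genuinely involves composable pairs of arrows and the action of $\phi^\sharp$ on them, and some care is needed to see that the pullback identifications from \cref{comonoidcategory} make the two statements literally the same.
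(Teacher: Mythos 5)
Your proposal is correct and follows essentially the same route as the paper: reduce to the presheaf case via \cref{preservereflect}, match the counit-preservation equation to the identity equation and the positions/directions components of comultiplication-preservation to the target and composition equations respectively, and then transport identities and composites from $\comon(\poly_\E)$. Your explicit remark that the preserve-and-reflect argument extends to equations between morphisms (not just structures) is a worthwhile clarification of a step the paper cites tersely, but it is the same argument.
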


\begin{proof}
Let $(c,\epsilon,\delta)$ and $(c',\epsilon',\delta')$ be polynomial comonoids (or equivalently internal categories), and consider a morphism $\phi$ of polynomials from $c$ to $c'$, or equivalently the data of an internal cofunctor from $\C$ to $\C'$ as in \eqref{eqn.cofunctor} without assuming the equations in \eqref{eqn.cofunctoreqs}. We show that the comonoid homomorphism equations for $\phi$ are equivalent to the identity, target, and composition equations in \eqref{eqn.cofunctoreqs}, from which the result follows immediately. By \cref{preservereflect}, it suffices to restrict to the case when the category $\E$ is a presheaf category $\Ah$.

The counit preservation equation for $\phi$ states that $\epsilon$ agrees with the composite $\epsilon' \circ \phi$ of polynomial morphisms pictured on the left in \eqref{eqn.counits}.
\begin{equation}\label{eqn.counits}
\begin{tikzcd}[column sep=scriptsize]
C_\ast \dar[swap]{c} & C \times_{C'} C'_\ast \lar[swap]{\phi^\sharp} \rar \dar \ar[phantom]{dr}[pos=0]{\lrcorner} &[-10pt] C'_\ast \dar{c'} & C' \lar[swap]{(\epsilon')^\sharp} \rar \dar[equals] \ar[phantom]{dr}[pos=0]{\lrcorner} & 1 \dar[equals] \\
C & C \lar[equals] \rar{\phi_1} & C' & C' \lar[equals] \rar & 1
\end{tikzcd}\qquad\begin{tikzcd}
C_\ast \dar[swap]{c} &[40pt] C \times_{C'} C' \lar[swap]{\phi^\sharp \circ (\id_{C} \times_{C'} (\epsilon')^\sharp)} \rar \dar[swap]{\cong} \ar[phantom]{dr}[pos=0]{\lrcorner} & 1 \dar[equals] \\
C & C \lar[equals] \rar{\epsilon'_1 \circ \phi_1} & 1
\end{tikzcd}
\end{equation}
That this composite, pictured on the right in \eqref{eqn.counits}, agrees with $\epsilon$ is precisely equivalent to the commuting of the upper left diagram in \eqref{eqn.cofunctoreqs}, where $i=\epsilon^\sharp$ and $i'=(\epsilon')^\sharp$.

The comultiplication preservation equation for $\phi$ states that the composites $\delta' \circ \phi$ and $(\phi \trii \phi) \circ \delta$ of polynomial morphisms, pictured for an object $a$ in $\A$ on the left and right respectively in \eqref{eqn.comults}, agree.
\begin{equation}\label{eqn.comults}
\begin{tikzcd}
(C_\ast)_a \dar[swap]{c} &[-10pt]
(C \times_{C'} C'_\ast)_a 
\lar[swap,pos=.3]{\phi^\sharp} \rar[shorten=-2] \dar \ar[phantom]{dr}[pos=0]{\lrcorner} &[-15pt]
(C'_\ast)_a \dar{c'} &[-5pt] 
(C'_\ast \times_{C'} C'_\ast)_a 
\lar[swap,pos=.3]{(\delta')^\sharp} \rar[shorten >=-12] \dar \ar[phantom]{dr}[pos=0]{\lrcorner} & [-15pt]
\coprod\limits_{\substack{x' \in C'_a \\ f' \colon c'[x'] \to C'}} \coprod\limits_{y' \in c'[x']_a} \!\! c'[f'(y')]_a 
\dar[shorten <=-8,shift left=5]
\\
C_a & C_a \lar[equals] \rar{\phi_1} & C'_a & C'_a \lar[equals] \rar{\delta'_1} & 
\coprod\limits_{x' \in C'_a} \Hom_{\Ah}\left(c'[x'],C'\right) 
\end{tikzcd}
\end{equation}
\[\begin{tikzcd}
(C_\ast)_a \dar[swap]{c} &[-15pt]
(C_\ast \times_{C} C_\ast)_a 
\lar[shorten=-3,swap,pos=.3]{\delta^\sharp} \rar[shorten >=-13,shorten <=-3] \dar \ar[phantom]{dr}[pos=.05]{\lrcorner} &[-35pt]
\coprod\limits_{\substack{x \in C_a \\ f \colon c[x] \to C \\ y \in c[x]_a}} \!\!\!\!\!\!\! c[f(y)]_a 
\dar[shorten <=-29,shift left=7] &[-35pt]
\coprod\limits_{\substack{x \in C_a \\ f \colon c[x] \to C \\ y' \in c'[\phi_1(x)]_a}} \!\!\!\!\!\!\! c'[\phi_1(f(\phi^\sharp(x,y')))]_a 
\lar[shorten <=-19,shorten >=-4,swap,pos=-2.4]{(\phi \trii \phi)^\sharp} \rar[shorten >=-15] \dar[shorten <=-29,shift left=4] \ar[phantom]{dr}[pos=-1.3]{\lrcorner} &[-30pt]
\coprod\limits_{\substack{x' \in C'_a \\ f' \colon c'[x'] \to C' \\ y' \in c'[x']_a}} \!\!\!\!\!\!\! c'[f'(y')]_a \dar[shorten <=-29,shift left=8]
\\
C_a & C_a \lar[equals] \rar[shorten >=-5]{\delta_1} & 
\coprod\limits_{x \in C_a} \!\! \Hom_{\Ah}\left(c[x],C\right) & 
\coprod\limits_{x \in C_a} \!\! \Hom_{\Ah}\left(c[x],C\right) \lar[equals, shorten=-3] \rar[shorten >=-8, inner sep=10]{(\phi \trii \phi)_1} & 
\coprod\limits_{x' \in C'_a} \!\! \Hom_{\Ah}\left(c'[x'],C'\right) 
\end{tikzcd}\]

On positions, this states that the maps sending an element $x \in C_a$ to respectively
\[
\left(\phi_1(x), \delta'_1(\phi_1(x))\right) \qqand 
\left(\phi_1(x),\phi_1 \circ \delta_1(x) \circ \phi^\sharp\restrict{c'[\phi_1(x)]}\right)
\]
agree, where $\phi^\sharp\restrict{c'[\phi_1(x)]}$ is the restriction of $\phi^\sharp$ to a map 
\[
c'[\phi_1(x)] \cong \yon(a) \pb{\phi_1(x)}{c'} C'_\ast \to \yon(a) \pb{x}{c} C_\ast \cong c[x].
\]
Under the correspondence between $\delta_1,\delta'_1$ and the target maps $t,t'$ of the internal categories associated to $c,c'$, this equation shows that for any pair $(x,y') \in (C \times_{C'} C'_\ast)_a$, where by definition $y' \in c'[\phi_1(x)]_a$, this equation is equivalent to
\[
t'(y) = \phi_1(t(\phi^\sharp(x,y))),
\]
which is precisely the target equation for internal cofunctors in \eqref{eqn.cofunctoreqs}.

For the comultiplication equation for $\phi$ directions, first observe that both composite morphisms of polynomials in \eqref{eqn.comults} have the form of \eqref{eqn.comultcomp},
\begin{equation}\label{eqn.comultcomp}
\begin{tikzcd}
(C_\ast)_a \dar[swap]{c} &
(C \times_{C'} C'_\ast \times_{C'} C'_\ast)_a 
\lar[swap]{?} \rar[shorten >=-5] \dar \ar[phantom]{dr}[pos=0]{\lrcorner} &
\coprod\limits_{\substack{x' \in C'_a \\ f' \colon c'[x'] \to C'}} \coprod\limits_{y' \in c'[x']_a} \!\! c'[f'(y')]_a 
\dar[shorten <=-8,shift left=5]
\\
C_a & C_a \lar[equals] \rar{\delta'_1 \circ \phi_1} & 
\coprod\limits_{x' \in C'_a} \Hom_{\Ah}\left(c'[x'],C'\right) 
\end{tikzcd}
\end{equation}
in the first case simply by inspection of the diagram and in the second case using the comultiplication equation on positions discussed above. Given this, the comultiplication equation for $\phi$ on directions then states that the maps sending a triple $(x,y',z') \in (C \times_{C'} C'_\ast \times_{C'} C'_\ast)_a$ to respectively
\[
\phi^\sharp(x,(\delta')^\sharp(y',z')) \qqand \delta^\sharp(\phi^\sharp(x,y'),\phi^\sharp(\delta_1(x)(y'),z'))
\]
agree. In the language of the corresponding internal categories, the equivalent equation is 
\[
\phi^\sharp(x,k'(y',z')) = k(\phi^\sharp(x,y'),\phi^\sharp(t(y'),z')),
\]
which is precisely the composition equation for internal cofunctors in \eqref{eqn.cofunctoreqs}.
\end{proof}

\begin{example}
In $\poly_{\smcat}$, internal cofunctors between double categories (with exponentiable source functor) are maps given by compatible functors in the vertical direction and cofunctors in the horizontal direction. More precisely, a comonoid homomorphism $\phi \colon \C \to \D$ in $\poly_{\smcat}$ consists of a functor $\phi_1 \colon C_0 \to D_0$ between the vertical categories, a cofunctor from the horizontal category of $\C$ to that of $\D$, and a cofunctor from the category of vertical arrows and squares of $\C$ to that of $\D$ such that these cofunctors agree with $\phi_1$ on objects and on morphisms commute with vertical sources, targets, identities, and composites.
\end{example}

\section{Coalgebras as internal copresheaves}\label{sec.coalgebras}

Having shown that $\tri$-comonoids in the monoidal category $\poly_\E$ are precisely the internal categories in $\E$ whose source morphism is exponentiable, we now turn to studying additional categorical structures relating to comonoids. To start, we show that coalgebras for the comonads on $\E$ induced by polynomial comonoids recover the usual internal analogue of copresheaves (also known as internal diagrams) on an internal category \cite[Section V.7]{macLane1992sheaves}. Recall the notation $\P \colon \poly_\E \to \StrFunn(\E,\E)$ from \cref{polynomialfunctor,Pfunctor}.

\begin{definition}
Given a $\tri$-comonoid $c$ in $\poly_\E$ for a finite limit category $\E$, a $c$-coalgebra is an object $S$ in $\E$ equipped with a morphism $\kappa \colon S \to \P(c)(S)$ such that the counit and comultiplication diagrams in \eqref{eqn.coalgeqs} commute.
\begin{equation}\label{eqn.coalgeqs}
\begin{tikzcd}
S \rar{\kappa} \ar{dr}[swap]{\cong} & \P(c)(S) \dar{\P(\epsilon)_S} \\
& \P(\yon)(S)
\end{tikzcd}\qquad\quad\begin{tikzcd}[row sep=tiny]
& \P(c)(S) \rar{\P(\delta)_S} &[15pt] \P(c \trii c)(S) \ar{dd}{\cong} \\
S \ar{ur}{\kappa} \ar{dr}[swap]{\kappa} \\
& \P(c)(S) \rar[swap]{P(c)(\kappa)} & \P(c)(\P(c)(S))
\end{tikzcd}
\end{equation}
A $c$-coalgebra homomorphism $S \to S'$ is a morphism in $\E$ which commutes with $\kappa,\kappa'$. We write $c\coalg$ for the category of $c$-coalgebras and homomorphisms.
\end{definition}

Recall that by the universal property of $\P(c)(S)$, a morphism $S \to \P(c)(S)$ in $\E$ corresponds to morphisms $S \to C$ and $S \times_C C_\ast \to S$

\begin{example}
In $\poly_{\smset}$, a function $S \to C$ labels each element of $S$ with an object of $C$, and a function $S \times_C C_\ast \to S$ assigns to each element of $S$ and each morphism out of its label in $C$ a new element of $S$ labeled by the codomain of that morphism. This codomain condition is a consequence of the comultiplication equation, which also ensures that these assignments are functorial in $c$ while the counit equation ensures that the identity morphisms in $c$ act as identities on $S$. A coalgebra structure therefore exhibits $S$ as the total set of a functor from the category $c$ to the category of sets, and any copresheaf on $c$ conversely produces a coalgebra in this manner. The goal of this section is to generalize this correspondence to coalgebras in $\poly_\E$ and internal copresheaves for any finite limit category $\E$.
\end{example}

When working with ordinary categories, copresheaves (as in, functors to $\smset$) are a frequent object of attention, suggesting that there ought to be an internal notion of copresheaf. However, there is not generally an analogue of the category of sets for categories internal to $\E$, so in order to imitate working with copresheaves we must instead generalize the equivalent notion of discrete opfibration using a generalization to the internal setting first given in \cite[Examples 10, 13]{clarke2020internal}. Recall that for a category $\A$, the category of functors $\A \to \smset$ is equivalent to the category of discrete opfibrations into $\A$.

\begin{definition}
For $\C$ a category internal to $\E$, an internal copresheaf (or internal discrete opfibration) on $\C$ is an internal cofunctor $\phi \colon \D \to \C$ whose underlying morphism of polynomials is cartesian. A morphism of internal copresheaves $\D,\D'$ on $\C$ is an internal cofunctor $\D \to \D'$ commuting over $\C$.
\end{definition}

\begin{remark}
Just as among ordinary categories, discrete opfibrations between them can be equivalently regarded as either functors or cofunctors. Furthermore, maps between internal copresheaves on $\C$ can also be equivalently defined as either functors or cofunctors commuting over $\C$, as in both cases the commutativity condition forces such a functor or cofunctor to be a discrete opfibration.
\end{remark}

\begin{theorem}\label{coalgcopresheaves}
For $c$ a comonoid in $\poly_\E$ for $\E$ a finite limit category, the category $c\coalg$ of $c$-coalgebras is equivalent to the category of internal copresheaves on $c$.
\end{theorem}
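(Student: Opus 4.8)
The plan is to reuse the presheaf-reduction technique once more. First I would record a preservation-and-reflection lemma in the spirit of \cref{preservereflect}: for a fully faithful dense $F\colon\A\to\E$, the singular functor $F^\ast\colon\E\to\Ah$ preserves and reflects both $c$-coalgebras and internal copresheaves on $c$. For coalgebras this follows because $F^\ast$ is fully faithful, preserves finite limits and exponentials (\cref{densepi}), and by \cref{Pfunctor} satisfies $F^\ast(\P(c)(A))\cong\P(F^\ast c)(F^\ast A)$, so that a structure map $\kappa\colon S\to\P(c)(S)$ transports to, and is detected on, $F^\ast S$; for internal copresheaves, which are cartesian internal cofunctors, it follows from \cref{preservereflect} together with \cref{cofunctorequivalence}. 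This reduces the statement to the case $\E=\Ah$, where we may argue element-wise.

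So fix a comonoid $c\colon C_\ast\to C$ in $\poly_\Ah$ and, by \cref{comonoidcategory}, the internal category $\C$ it determines. By the universal property of $\P(c)(S)$ recalled above, a coalgebra structure $\kappa$ on $S$ is the same as a \emph{labeling} map $\ell\colon S\to C$ together with an \emph{action} map $a\colon S\times_C C_\ast\to S$, where the pullback is formed along $\ell$ and $c$. From this data I would build the internal copresheaf $\phi\colon\D\to\C$ whose object-of-objects is $D_0\coloneqq S$, whose source polynomial $d$ is the cartesian pullback $S\times_C C_\ast\to S$ of $c$ along $\ell$ (hence exponentiable by \cref{comppb}), whose target map is the action $a$, and whose identity and composition maps are induced from those of $\C$ through the defining pullback of $d$. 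The cofunctor $\phi$ of \eqref{eqn.cofunctor} is then the evidently cartesian diagram with $\phi_1=\ell$ and $\phi^\sharp=\id$.

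The heart of the proof is to verify that the two coalgebra axioms in \eqref{eqn.coalgeqs} are exactly the internal-category and cofunctor axioms in \eqref{eqn.cofunctoreqs} for this $\D$ and $\phi$. The counit equation says that identity morphisms of $\C$, selected along $\ell$, act trivially under $a$, which is precisely the identity law of $\D$ together with identity-preservation of $\phi$. The comultiplication equation, whose two sides live in $\P(c\trii c)(S)\cong\P(c)(\P(c)(S))$, unwinds---just as the comultiplication equation split into a target and a composition equation in the proof of \cref{cofunctorequivalence}---into two assertions: that $a$ sends a pair consisting of an element of $S$ and an outgoing morphism of its label to an element whose label is the target of that morphism (the target equation for $\phi$), and that $a$ is functorial with respect to composition in $\C$ (the composition law of $\D$). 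Conversely, any cartesian cofunctor $\phi\colon\D\to\C$ yields a coalgebra by reading off $\ell=\phi_1$ and $a=t_\D$, and these two constructions are mutually inverse by inspection. I expect this element-wise matching of the comultiplication diagram to be the main obstacle, exactly as it was the most involved step in \cref{comonoidcategory,cofunctorequivalence}.

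Finally, for morphisms: a $c$-coalgebra homomorphism $S\to S'$ is a map commuting with $\kappa,\kappa'$, which under the above dictionary is precisely a map $D_0\to D'_0$ respecting labelings and actions, i.e.\ an internal cofunctor $\D\to\D'$ commuting over $\C$. Since the assignments on objects and on morphisms are mutually inverse and manifestly respect identities and composition, they assemble into the desired equivalence---indeed an isomorphism of categories, as in \cref{cofunctorequivalence}.
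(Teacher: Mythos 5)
Your core construction is exactly the paper's: decompose $\kappa\colon S\to\P(c)(S)$ via the universal property of $\Pi_c$ into a labeling $S\to C$ and an action $S\times_C C_\ast\to S$, take the pullback of $c$ along the labeling as the (exponentiable, cartesian-over-$c$) source polynomial of an internal category with the action as target, and match the counit/comultiplication axioms against the identity/target/composition laws; the treatment of morphisms is likewise the same. The one place you diverge is the opening reduction to $\E=\Ah$: the paper does not use the presheaf-reduction technique for this theorem, because the universal property of $\P(c)(S)$ already yields the decomposition of $\kappa$ directly in any finite limit category, so the whole dictionary can be (and is) set up by universal properties of pullbacks in $\E$ itself. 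Your reduction is sound --- preservation and reflection of coalgebras follows from $F^\ast$ being fully faithful and commuting with $\P(-)$ by \cref{densepi} and \cref{Pfunctor}, and of internal copresheaves from \cref{preservereflect} --- but it buys you nothing here beyond the ability to phrase the verification element-wise, at the cost of an extra lemma. One small caution: the theorem asserts an equivalence rather than an isomorphism of categories, since internal cofunctors are isomorphism classes of diagrams while a coalgebra carries a chosen pullback; the paper accordingly only claims the two constructions are mutually inverse up to isomorphism.
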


\begin{proof}
Recall from \cref{polynomialfunctor} that for $S$ an object of $\E$, $\P(c)(S)$ is defined as $\Pi_c (S \times C_\ast)$. By the universal property of $\P(c)$, a map $\kappa \colon S \to \P(c)(S)$ is uniquely determined by maps $\kappa_1 \colon S \to C$ and $\kappa^\sharp \colon S \times_C C_\ast \to S$. The projection morphism $\pi_1 \colon S \times_C C_\ast \to S$ is a polynomial (as a pullback of $c$) with a cartesian morphism $\overline\kappa$ to $c$, which we further show forms an internal category $\mathbf{S}$ with $\kappa^\sharp$ as the target map and $\overline\kappa$ an internal discrete opfibration $\mathbf{S} \to \C$.

The map $S \times_C C_\ast \from S$ which chooses identities is the morphism into the pullback induced by $\id_S$ and $i \colon C_\ast \from C$, while the composition map 
\[
(S \times_C C_\ast) \times_S (S \times_C C_\ast) \to S \times_C C_\ast
\]
is induced by the projection
\[
(S \times_C C_\ast) \times_S (S \times_C C_\ast) \To{\pi_1} S \times_C C_\ast \To{\pi_1} S
\]
and the composite 
\[
(S \times_C C_\ast) \times_S (S \times_C C_\ast) \to C_\ast \times_C C_\ast \To{k} C_\ast,
\]
which by construction commute over $C$. The equations for an internal category and internal cofunctor are then straightforward to check by the construction of $\mathbf{S}$ from the internal category $\C$.

Conversely, let $\phi \colon \D \to \C$ be an internal discrete opfibration in $\E$. The maps $\phi_1 \colon D \to C$ and $t \colon D_\ast \cong D \times_C C_\ast \to D$ determine a map $D \to \P(c)(D)$, and it is straightforward to check that it satisfies the coalgebra equations, since $\phi$ is a comonoid homomorphism. This assignment is evidently inverse up to isomorphism to the construction above of an internal discrete opfibration from a coalgebra, so it remains only to show that the morphisms in the two categories agree.

A morphism of $c$-coalgebras is a morphism $\psi$ making the square in \eqref{eqn.coalgmap} commute,
\begin{equation}\label{eqn.coalgmap}
\begin{tikzcd}[column sep=large]
S \rar{\psi} \dar[swap]{\kappa} & S' \dar{\kappa'} \\
\P(c)(S) \rar[swap]{\P(c)(\psi)} & \P(c)(S')
\end{tikzcd}
\end{equation}
which under the universal properties of $\Pi$ and products corresponds to the equations $\kappa_1 \circ \psi = \kappa'_1$ and $\kappa^\sharp \circ (\psi \times_C C_\ast) = \psi \circ (\kappa')^\sharp$. In the corresponding internal categories, as the identities and compositions are derived from those of $\C$ using $\kappa,\kappa'$, this data is equivalent to the pullback square 
\[
\begin{tikzcd}
S \times_C C_\ast \rar{\pi_1} \dar[swap]{\psi \times_C C_\ast} & S \dar{\psi} \\
S' \times_C C_\ast \rar{\pi_1} & S' 
\end{tikzcd}
\]
satisfying the internal cofunctor equations. Since all cofunctors between these internal categories commuting over $\C$ are of this form, the proof is complete.
\end{proof}

\begin{example}
If $\E$ is additionally cartesian closed, then the morphism $S \to 1$ is a polynomial which we refer to as $\yon^S$, and the comonoid $(\yon^S)_\ast$ is precisely the polynomial $S\yon^S$. In this case, it is straightforward to check that a $c$-coalgebra structure on $S$ is precisely the data of a cofunctor $S\yon^S \to c$, both consisting of morphisms $S \to C$ and $S \times_C C_\ast \to S$ satisfying analogous counit and comultiplication equations. In this case, the internal discrete opfibration corresponding to the coalgebra $S$ is simply the cartesian component of this cofunctor.
\end{example}

\begin{example}\label{doublecopresheaves}
In $\smcat$, an internal discrete opfibration $\phi \colon \D \to \C$ consists of a pullback square from $s' \colon D_1 \to D_0$ to $s \colon C_1 \to C_0$ commuting with target, identity, and composition functors. This is in particular a double functor which is a discrete opfibration on both the horizontal categories and the categories of vertical arrows and squares. We denote these categories by $C^\dagger_0$ and $C^\dagger_1$, as they are the categories of objects and horizontal morphisms respectively in the \emph{transpose} of $\C$, where horizontal and vertical morphisms are swapped.\footnote{While it is beyond the scope of this paper, an analogue of the following discussion for pseudo-double categories is particularly challenging as the transpose operation is not available.}

This makes the transpose double functor $\phi^\dagger \colon \D^\dagger \to \C^\dagger$ a \emph{discrete double opfibration} in the sense of \cite[Definition 2.2.6]{lambert2021discrete}.%
\footnote{While Lambert defines discrete double fibrations rather than opfibrations, the analogy is entirely straightforward.} 
Discrete double opfibrations into a double category are equivalent to lax double functors from that double category to $\sspan$, the double category of sets, functions, spans, and morphisms of spans by \cite[Theorem 2.4.3]{lambert2021discrete}, so we have that a $c$-coalgebra in $\poly_{\smcat}$ corresponds to a lax double functor $\C^\dagger \to \sspan$, the standard (\cite{pare2011yoneda}) notion of a double copresheaf on the transpose of the double category $\C$. In particular, this lax double functor sends objects $a$ in $\C$ to the set $\phi^{-1}(a)$ of objects over $a$ in $\D$, vertical arrows $f \colon a \to b$ in $\C$ to the span $\phi^{-1}(a) \from \phi^{-1}(f) \to \phi^{-1}(b)$, and horizontal arrows/squares to the functions they correspond to under the discrete opfibrations of $\phi^\dagger$. The identitor and compositor structure maps arise from vertical identities and composition in $\D$.
\end{example}

\section{Bicomodules: typed polynomials and familial functors}

Algebraic structures often have many different types of morphisms between them, typically including some sort of structure-preserving maps as well as a notion of an object which interacts in a certain way with both the domain and codomain, such as a span or bimodule. For comonoids in $\poly_\E$, a natural choice is that of a bicomodule, which as we show in \cref{typedpolycompose,rem.pra}, recovers both the typed polynomials of \cite{kock2012polynomial,weber2015polynomials} and parametric right adjoint functors between copresheaf categories.

\begin{definition}
Given a $\tri$-comonoid $c$ in $\poly_\E$ for a finite limit category $\E$, a \emph{left $c$-comodule} is a polynomial $m$ equipped with a morphism of polynomials $\kappa \colon m \to c \trii m$ (the \emph{left coaction}) such that the counit and comultiplication diagrams in \eqref{eqn.comodeqs} commute.
\begin{equation}\label{eqn.comodeqs}
\begin{tikzcd}
m \rar{\kappa} \ar{dr}[swap]{\cong} & c \trii m \dar{\epsilon \trii \id_m} \\
& \yon \trii m
\end{tikzcd}\qquad\quad\begin{tikzcd}[row sep=tiny]
& c \trii m \rar{\delta \trii \id_m} &[15pt] (c \trii c) \trii m \ar{dd}{\cong} \\
m \ar{ur}{\kappa} \ar{dr}[swap]{\kappa} \\
& c \trii m \rar[swap]{\id_c \trii \kappa} & c \trii (c \trii m)
\end{tikzcd}
\end{equation}
A \emph{right $c$-comodule} is a polynomial $m$ equipped with a \emph{right coaction} morphism of the form $\chi \colon m \to m \trii c$ satisfying analogous equations to those in \eqref{eqn.comodeqs}. A (left or right) $c$-comodule homomorphism $m \to m'$ is a morphism of the underlying polynomials which commutes with the respective coaction maps. 
\end{definition}

\begin{remark}
By transpose along the right coclosure, when it exists, a left $c$-comodule structure on $m$ corresponds to a cofunctor $\lens{m}{m} \to c$. Similarly, a right $c$-comodule structure on $m$ corresponds to a cofunctor $m_\ast \to c$. In both cases the counit and comultiplication equations for comonoid homomorphisms and comodules correspond exactly.
\end{remark}

\begin{remark}\label{comodcoalg}
As the positions of $c \tri m$ are given by $\P(c)(M)$ and the equations are analogous, a left $c$-comodule structure on $m$ endows $M$ with the structure of a $c$-coalgebra. Meanwhile, a right $d$-comodule structure on $m$ in fact endows $M_\ast$ with the structure of a $d$-coalgebra, as a morphism $m \to m \tri d$ satisfying the comodule equations consists of morphisms $M_\ast \to D$ and $M_\ast \times_D D_\ast \to M_\ast$ satisfying the coalgebra equations.
\end{remark}

\begin{definition}
For $\tri$-comonoids $c,d$ in $\poly_\E$ for a finite limit category $\E$, a $(c,d)$-bicomodule is a polynomial $m$ equipped with a left $c$-comodule structure $\kappa$ and a right $d$-comodule structure $\chi$ such that the diagram in \eqref{eqn.bicomod} commutes.
\begin{equation}\label{eqn.bicomod}
\begin{tikzcd}
c \trii m \dar[swap]{\id_c \trii \chi} & m \lar[swap]{\kappa} \rar{\chi} & m \trii d \dar{\kappa \trii \id_d} \\
c \trii (m \trii d) & & (c \trii m) \trii d \ar{ll}{\cong}
\end{tikzcd}
\end{equation}
Given cofunctors $\phi \colon c \to c'$ and $\psi \colon d \to d'$, a $(c,d)$-bicomodule $m$ and a $(c',d')$-bicomodule $m'$, a $(\phi,\psi)$-homomorphism of bicomodules $\gamma \colon m \to m'$ is a morphism of the underlying polynomials which makes the squares in \eqref{eqn.bicomodhom} commute. 
\begin{equation}\label{eqn.bicomodhom}
\begin{tikzcd}
c \trii m \dar[swap]{\phi \trii \gamma} & m \lar[swap]{\kappa} \rar{\chi} \dar{\gamma} & m \trii d \dar{\gamma \trii \psi} \\
c' \trii m' & m' \lar{\kappa'} \rar[swap]{\chi'} & m' \trii d' 
\end{tikzcd}
\end{equation}
We write $\bic{}{}(\poly_\E)$ for the category whose objects are triples $(c,d,m)$ where $m$ is a $(c,d)$-bicomodule in $\poly_\E$, and whose morphisms are triples $(\phi,\psi,\gamma)$ where$\gamma$ is a $(\phi,\psi)$-homomorphism of bicomodules.
\end{definition}

\begin{example}\label{setbicomod}
For ordinary categories $c,d$, a $(c,d)$-bicomodule $m$ in $\poly_{\smset}$ contains precisely the data of a parametric right adjoint functor from the category of $d$-copresheaves to that of $c$-copresheaves (\cite[Theorem 2.3.1]{spivak2021functorial}, originally due to \href{https://www.youtube.com/watch?v=tW6HYnqn6eI}{Richard Garner}).

By \cref{comodcoalg}, the set $M$ carries a $c$-copresheaf structure and the set $M_\ast$ carries a $d$-copresheaf structure. Further inspection shows that for $x \in M_I$, the preimage of $I \in C$ along the function $M \to C$, and a morphism $f \colon I \to J$ in $C$, there is a function $m[f_\ast(x)] \to m[x]$ functorial in $f$, and that by the compatibility equation in \eqref{eqn.bicomodhom} these functions are morphisms of $d$-copresheaves. The functor associated to $m$ then sends a $d$-copresheaf $X$ to the $c$-copresheaf whose component at an object $I$ of $c$ is given by
\[
\coprod_{x \in M_I} \Hom_{d\coalg}(m[x],X)
\] 
and whose structure map for $f \colon I \to J$ in $c$ derive from the functions $M_I \to M_J$ and $m[f_\ast(x)] \to m[x]$ associated to $m$.
\end{example}

\begin{example}\label{typedpolybicomod}
Consider now the discrete comonoids $C\yon,D\yon$ of \cref{discretecomonoid} for objects $C,D$ in $\E$. For a polynomial $m$, the composite $C\yon \trii m$ is given by the scalar product $Cm$ (\cref{scalarmult}), and a left $C\yon$-comodule structure on $m$ is uniquely determined by a morphism $M \to C$ in $\E$, as by the counit equation for modules any coaction map $m \to Cm$ is cartesian and the map on positions $M \to C \times M$ is a section of the second projection map.

A right $D\yon$-coaction on $m$ on positions consists of a map $M \to \Pi_m (D \times M_\ast)$ induced by the identity morphism on $M$ and a morphism $M_\ast \to D$. The coaction on directions has the form $M_\ast \to M_\ast$ and is again forced to be the identity by the counit equation for comodules.

A $(C\yon,D\yon)$-bicomodule therefore consists of precisely a polynomial $m$ along with morphisms $M \to C$ and $M_\ast \to D$, where the compatibility condition holds automatically. This is precisely the data of a typed polynomial from $D$ to $C$. Furthermore, as cofunctors of the form $\phi \colon C\yon \to C'\yon$ and $\psi \colon D\yon \to D'\yon$ are precisely morphisms $C \to C'$ and $D \to D'$ respectively in $\E$, a homomorphism of bicomodules $m \to m'$ is precisely a morphism of the corresponding typed polynomials, namely, a morphism of polynomials $m \to m'$ whose component on positions commutes with $\phi$ and whose component on directions commutes with $\psi$.
\end{example}

In \cref{typedpolycompose} we further show that composition of bicomodules agrees with composition of typed polynomials as defined in \cite[Definition 1.11]{kock2012polynomial}. In this way, our presentation of the theory of generalized polynomials recovers the treatment of \cite{kock2012polynomial,weber2015polynomials} in terms of typed polynomials.

In fact, we recover the entire double category of typed polynomials of \cite{kock2012polynomial} as a full double subcategory of a double category $\ccatsharp_\E$ whose objects are comonoids in $\poly_\E$, vertical morphisms are cofunctors, horizontal morphisms are bicomodules, and squares are a suitable generalization of maps between bicomodules. To this end, we define a general notion of bicomodule composition which, in the case of bicomodules between discrete comonoids, will always exist for a finite limit category $\E$.

\begin{definition}
For a $(c,d)$-bicomodule $m$ and a $(d,e)$-bicomodule $m'$, their composite $m \tri_d m'$ is defined as the equalizer of the diagram
\[
m \tri m' \Tto[35pt]{\chi \trii \id_{m'}}{\id_m \trii \kappa'} m \tri d \tri m'
\]
in $\poly_\E$, if such an equalizer exists and is preserved by the functor $- \tri e$. As the functors $c \tri -$ (by \cref{trilimit}) and $- \tri e$ (by assumption) preserve this equalizer, the coactions $\kappa \colon m \to c \tri m$ and $\chi' \colon m' \to m' \tri e$ induce a $(c,e)$-bicomodule structure
\[
c \tri (m \tri_d m') \from m \tri_d m' \to (m \tri_d m') \tri e
\]
on this composite, whose equations are tedious but straightforward to check. 
\end{definition}

For bicomodules to always be able to compose, there would need to be arbitrary equalizers in $\poly_\E$ preserved by $\tri$, which would require additional assumptions on the category $\E$ (for instance having coequalizers) that are beyond the scope of this paper. However, as we will see, in limited circumstances bicomodules can be guaranteed to compose.

\begin{example}\label{idbicomod}
For any comonoid $c$ in $\poly_\E$, there is an identity $(c,c)$-bicomodule given by the polynomial $c$ and the comultiplication $\delta \colon c \to c \tri c$ as both coactions. When $c$ is a discrete comonoid $C\yon$ the comultiplication is given by the diagonal map $C \to C \times C$, and this bicomodule corresponds to the identity typed polynomial on the object $C$ whose component morphisms are all $\id_C$.
\end{example}

\begin{lemma}\label{cartesianbicomod}
For comonoids $c,d,e$ in $\poly_\E$ for a finite limit category $\E$, a $(c,d)$-bicomodule $(m,\kappa,\chi)$ such that $\chi$ is a cartesian morphism, and a $(d,e)$-bicomodule $(m',\kappa',\chi')$ such that $\kappa'$ is a cartesian morphism, the composite $(c,e)$-bicomodule $m \tri_d m'$ exists.
\end{lemma}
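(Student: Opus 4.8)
The plan is to verify directly the two conditions required by the definition of the composite $m \tri_d m'$: that the equalizer of $\chi \trii \id_{m'}$ and $\id_m \trii \kappa'$ exists in $\poly_\E$, and that this equalizer is preserved by the functor $- \tri e$. The equalizer is the limit of the connected diagram $\bullet \rightrightarrows \bullet$, so both conditions will be immediate consequences of \cref{hasconnectedlimits} and \cref{trilimit} --- which respectively guarantee the existence of finite connected limits of cartesian diagrams and their preservation by $- \tri e$ --- once we know that the two parallel legs are \emph{cartesian} morphisms of polynomials.

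Thus the first step, which I expect to be the main obstacle, is to establish that the composition product $\tri$ carries cartesian morphisms to cartesian morphisms in each variable; then $\chi$ cartesian forces $\chi \trii \id_{m'}$ cartesian and $\kappa'$ cartesian forces $\id_m \trii \kappa'$ cartesian. The cleanest route is the reduction-to-presheaves technique used throughout the paper. For a fully faithful dense $F \colon \A \to \E$, the strong monoidal functor $\poly_{F^\ast} \colon \poly_\E \to \poly_\Ah$ of \cref{monoidalcat} both preserves and reflects cartesian morphisms: a morphism is cartesian exactly when its directions component $\phi^\sharp$ is an isomorphism, and by \cref{densepi} the singular functor $F^\ast$ is fully faithful and preserves pullbacks, hence sends $\phi^\sharp$ to the directions component of $\poly_{F^\ast}(\phi)$ and reflects as well as preserves isomorphisms. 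Since $\poly_{F^\ast}$ is strong monoidal it commutes with $\tri$ up to coherent isomorphism, so it suffices to prove the claim in $\poly_\Ah$; there the explicit description of $\tri$ in \cref{presheafcomp} makes it transparent, as whiskering a cartesian morphism by an identity induces fibrewise bijections on directions. (The same conclusion can be reached by a direct pullback-pasting argument through the composition diagram \eqref{eqn.comp}, but the presheaf computation is shorter.)

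With both legs cartesian, the equalizer diagram is a finite connected diagram of cartesian morphisms. Since a finite limit category has all finite connected limits, \cref{hasconnectedlimits} yields the equalizer in $\poly_\E$, and \cref{trilimit} shows that $- \tri e$ preserves it. These are exactly the two hypotheses in the definition of bicomodule composition, so $m \tri_d m'$ exists, and its $(c,e)$-bicomodule structure is induced as described there using the preservation of the equalizer by $c \tri -$ (\cref{trilimit}) and $- \tri e$. The only genuinely fiddly part is the fibrewise verification of cartesianness in the presheaf case; the remainder is a direct appeal to the two cited limit results.
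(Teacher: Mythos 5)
Your proposal is correct and follows essentially the same route as the paper, which likewise observes that the equalizer diagram consists only of cartesian morphisms and then invokes \cref{trilimit} (with \cref{hasconnectedlimits} for existence). The only difference is that you supply an explicit justification---via reduction to presheaves---for the fact that whiskering a cartesian morphism with an identity under $\tri$ yields a cartesian morphism, a step the paper's one-line proof takes for granted; your justification is sound.
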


\begin{proof}
This follows immediately from \cref{trilimit}, as the equalizer defining $m \tri_d m'$ consists only of cartesian morphisms and therefore exists and is preserved by $- \tri e$. 
\end{proof}

We are now ready to show that bicomodules in $\poly_\E$ allow us to recover the pseudo-double category of typed polynomials described in \cite{kock2012polynomial}.

\begin{theorem}\label{typedpolycompose}
There is a pseudo-double category $\ccatsharp_{\E,\disc}$ of discrete comonoids, homomorphisms, bicomodules, and bicomodule homomorphisms in $\poly_\E$ which is double-equivalent to the pseudo-double category of objects, morphisms, typed polynomials, and typed morphisms of polynomials in $\E$.
\end{theorem}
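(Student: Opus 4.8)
The plan is to promote the cell-wise bijections already established in \cref{typedpolybicomod} to a double functor, and then to verify that it matches the two compositional structures, the real content being the comparison of bicomodule composition over a discrete comonoid with Gambino--Kock composition of typed polynomials.

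First I would record that $\ccatsharp_{\E,\disc}$ is a genuine pseudo-double category. By the analysis in \cref{typedpolybicomod}, both coactions of a bicomodule between discrete comonoids are cartesian: the left coaction is cartesian because on positions it is a section of a projection, and the right coaction is cartesian because its component on directions is forced to be the identity by the counit law. Consequently, for any composable pair of bicomodules between discrete comonoids the right coaction of the first and the left coaction of the second are cartesian, so \cref{cartesianbicomod} guarantees that the composite $m \tri_d m'$ exists; this is exactly what makes horizontal composition total in $\ccatsharp_{\E,\disc}$. The horizontal units are supplied by \cref{idbicomod}, which already identifies the identity $(C\yon,C\yon)$-bicomodule with the identity typed polynomial on $C$. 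On objects, vertical morphisms, horizontal morphisms, and squares the bijections of \cref{typedpolybicomod} read: discrete comonoids $C\yon \leftrightarrow$ objects $C$; cofunctors $C\yon \to C'\yon \leftrightarrow$ morphisms $C \to C'$; $(C\yon,D\yon)$-bicomodules $\leftrightarrow$ typed polynomials from $D$ to $C$; and bicomodule homomorphisms $\leftrightarrow$ typed morphisms of polynomials. Since vertical composition of cofunctors between discrete comonoids is just composition in $\E$, vertical functoriality is immediate, and it remains only to compare horizontal composition.

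The crux is to show that $m \tri_d m'$ corresponds to the composite of \cite[Definition 1.11]{kock2012polynomial}. For a $(C\yon,D\yon)$-bicomodule $m$ with typing maps $M \to C$ and $M_\ast \to D$, and a $(D\yon,E\yon)$-bicomodule $m'$ with typing maps $M' \to D$ and $M'_\ast \to E$, the equalizer defining $m \tri_d m'$ sits inside $m \tri m'$, whose base, by the formula of \cref{tri}, is built from $\Pi_m$ applied to the product $M' \times M_\ast$. The two parallel maps $m \tri m' \tto m \tri d \tri m'$ differ precisely in how they record the $D$-value of a direction of $m$ (through $M_\ast \to D$) and the $D$-value of the corresponding position of $m'$ (through $M' \to D$), so the equalizer imposes that these agree, replacing $M' \times M_\ast$ by the pullback $M' \times_D M_\ast$ over the shared middle type. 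This is exactly the dependent product over $D$ appearing in the Gambino--Kock composite of the two typed polynomials. I expect this equalizer computation to be the main obstacle; following the technique of this paper, the cleanest route is to apply a singular functor $F^\ast$ associated to a fully faithful dense $F \colon \A \to \E$, which by \cref{densepi} preserves and reflects finite limits and exponentials and hence both $\tri$ and this equalizer, and to check the matching element-wise in the presheaf category $\Ah$, where it reduces to the explicit formula of \cref{presheafcomp}. Since every morphism in the defining equalizer is cartesian, one may alternatively compute it directly in $\E$ via \cref{hasconnectedlimits,trilimit}.

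Finally, the coherence data on both sides---associators and unitors---is pinned down by the same universal properties: equalizers and distributivity pullbacks on the bicomodule side, and the canonical pullback and dependent-product isomorphisms on the typed-polynomial side. Thus the cell-wise bijection upgrades to a double functor that preserves horizontal composition and units up to these matching canonical isomorphisms. As the assignment is bijective on objects, on vertical morphisms, on horizontal morphisms, and on squares, it is a double equivalence (indeed an isomorphism on the underlying double-graph), which establishes the claimed double-equivalence and completes the proof.
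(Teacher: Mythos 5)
Your proposal is correct and follows essentially the same route as the paper's proof: the cell-wise identifications of \cref{typedpolybicomod} and \cref{idbicomod}, composability via \cref{cartesianbicomod}, reduction to agreement on horizontal arrows, and the identification of the equalizer defining $m \tri_d m'$ with the pullback over $D$ (which the paper carries out directly in $\E$ via \cref{hasconnectedlimits} and the fact that $\Pi_p$ preserves limits as a right adjoint --- one of the two options you offer). The only point you leave implicit is that the pseudo-double category of typed polynomials in a general finite limit category $\E$ must itself first be established, which the paper does in one sentence by the same presheaf-transfer technique used for \cref{monoidalcat}.
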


In particular, this pseudo-double category is given by the diagram 
\[
\bic{}{}(\poly_\E)_{\disc} \tto \comon(\poly_\E)_{\disc}
\]
in $\lgcat$, where the two functors send a bicomodule to its source and target comonoid respectively.

\begin{proof}
First observe that the techniques used in the construction of the monoidal category $\poly_\E$ in \cref{chap.poly} apply equally well to recovering the pseudo-double category of typed polynomials in $\E$ from that of typed polynomials in $\Ah$ established by \cite{kock2012polynomial}. 
In particular, the identity horizontal arrow on an object $C$ of $\E$ is given by the identity morphism on $C$, and the composite of two typed polynomials $p$ from $D$ to $C$ and $q$ from $E$ to $D$ is given by the composition of the top row of morphisms in \eqref{eqn.typedcompose}.
\begin{equation}\label{eqn.typedcompose}
\begin{tikzcd}
&[-55pt] (\Pi_p (Q \times_D P_\ast) \times_P P_\ast) \times_Q Q_\ast \ar{dd} \ar{rr} \ar[phantom]{ddrr}[pos=0]{\lrcorner} &[-30pt] &[-25pt] \Pi_p (Q \times_D P_\ast) \times_P P_\ast \ar{rr} \dar \ar[phantom]{ddrr}[pos=0]{\distpb} &[-25pt] & \Pi_p (Q \times_D P_\ast) \ar{dd} &[-25pt] {} \\
& & & Q \times_D P_\ast \ar{dl} \ar{dr} \\
& Q_\ast \rar{q} \ar{dl}[swap]{f} & Q \ar{dr}{g} & {} & P_\ast \rar{p} \ar{dl}[swap]{h} & P \ar{dr}{k} \\
E & & & D & & & C
\end{tikzcd}
\end{equation}

As described in Examples \ref{discretecomonoid} and \ref{typedpolybicomod}, the diagrams
\[\bic{}{}(\poly_\E)_{\disc} \tto \comon(\poly_\E)_{\disc}
\qqand
\poly_\E^{\tn{typed}} \tto \E
\]
are equivalent, and their identities agree by \cref{idbicomod}, so it remains only to show that the two notions of composition agree. In fact, by an argument analogous to the definition of the action of $\tri$ on morphisms in the proof \cref{monoidalestablish}, it suffices to show that the two definitions of composition agree on horizontal arrows, as given this the horizontal composition of squares can be defined via the equivalence of categories $\bic{}{}(\poly_\E)_{\disc} \simeq \poly_\E^{\tn{typed}}$.

Consider typed polynomials $p,q$ as above, and regard them as $(C\yon,D\yon)$- and $(D\yon,E\yon)$-bicomodules respectively. These bicomodules have a composite by \cref{cartesianbicomod} as the coactions of $p,q$ are automatically cartesian by \cref{typedpolybicomod}, so it suffices to show that the typed polynomial on the top row in \eqref{eqn.typedcompose} is the equalizer of the diagram
\[
p \tri q \Tto[35pt]{g \trii \id_q}{\id_p \trii h} p \tri D\yon \tri q \cong p \tri Dq.
\]
By the construction of connected limits of diagrams of cartesian morphisms in \cref{hasconnectedlimits}, it further suffices to define an isomorphism 
\[
\Pi_p (Q \times_D P_\ast) \cong 
\lim\left(\begin{tikzcd}
\Pi_p (Q \times P_\ast) 
\rar[shift left=2]{{\Pi_p(\langle \id_Q,g \rangle \times \id_{P_\ast})}}
\rar[shift right=2,swap]{{\Pi_p(\id_Q \times \langle h,\id_{P_\ast} \rangle)}}
&[50pt] \Pi_p (Q \times D \times P_\ast)
\end{tikzcd}\right)
\]
between these polynomials on positions. This isomorphism follows from the fact that $\Pi_p$ preserves limits as a right adjoint, and the general fact that in any finite limit category there is an isomorphism
\[
Q \times_D P_\ast \cong 
\lim\left(\begin{tikzcd}
Q \times P_\ast 
\rar[shift left=2]{{\langle \id_Q,g \rangle \times \id_{P_\ast}}}
\rar[shift right=2,swap]{{\id_Q \times \langle h,\id_{P_\ast} \rangle}}
&[40pt] Q \times D \times P_\ast
\end{tikzcd}\right)
\qedhere
\]
\end{proof}

\begin{remark}
In \cite[Corollary 2.1.10]{spivak2021functorial}, a pseudo-double category (in fact, an equipment) of all bicomodules in $\poly_{\smset}$ is constructed using the $\bic{}{}$ construction from \cite[Theorem 11.5]{shulman2008framed}. This construction requires that $\poly_\E$ has all equalizers and that they are preserved by $\tri$, which would require additional assumptions on the category $\E$ which are beyond the scope of this paper but which do in fact hold in many examples of interest. 
\end{remark}

Finally, just as bicomodules in $\poly$ correspond to parametric right adjoint functors between the associated copresheaf categories (\cref{setbicomod}), bicomodules in $\poly_\E$ induce functors between categories of internal copresheaves. 

\begin{theorem}
For comonoids $c,d$ in $\poly_\E$ for a finite limit category $\E$, a $(c,d)$-bicomodule $(m,\kappa,\chi)$ induces a connected-limit-preserving functor from $d\coalg$ to $c\coalg$.
\end{theorem}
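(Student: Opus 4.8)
The plan is to realize the functor as the internal counterpart of the parametric-right-adjoint formula of \cref{setbicomod}, built from the polynomial functor $\P(m)$ by an equalizer, and then to read off preservation of connected limits from the fact that polynomial functors preserve connected limits. I would first record the ambient limit facts. As noted just after \cref{polynomialfunctor}, every polynomial endofunctor preserves connected limits, so $\P(m)$, $\P(d)$ and the composite $\P(m)\circ\P(d)$ all do. Since $\E$ has finite limits it has equalizers, and for any $\tri$-comonoid $e$ the forgetful functor $U^e \colon e\coalg \to \E$ creates those limits preserved by the comonad $\P(e)$ and its square; as $\P(c)$ and $\P(d)$ preserve connected limits (hence so do $\P(c)^2,\P(d)^2$), both $U^c$ and $U^d$ create connected limits, so connected limits of coalgebras are computed on underlying objects in $\E$.

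Next I would define $G_m \colon d\coalg \to c\coalg$. Applying $\P$ to the coactions $\kappa,\chi$ and using $\P(c\tri m)\cong\P(c)\P(m)$ and $\P(m\tri d)\cong\P(m)\P(d)$ equips $\P(m)$ with a left $\P(c)$-comodule and right $\P(d)$-comodule structure, compatible by \eqref{eqn.bicomod}. For a $d$-coalgebra $(X,\gamma\colon X\to\P(d)(X))$, set $G_m(X)$ to be the equalizer in $\E$ of
\[
\P(m)(X) \;\rightrightarrows\; \P(m)\P(d)(X),
\]
whose two legs are the right coaction $\P(\chi)_X$ and the map $\P(m)(\gamma)$. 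In $\smset$ this cuts $\P(m)(X)=\coprod_{x}\Hom(m[x],X)$ down to the fibrewise coalgebra maps $\coprod_{x}\Hom_{d\coalg}(m[x],X)$, matching \cref{setbicomod}. The left coaction $\P(\kappa)_X$ is a morphism of right $\P(d)$-comodules by \eqref{eqn.bicomod}, so it intertwines the two legs; since $\P(c)$ preserves this equalizer, $\P(\kappa)_X$ restricts to a map $G_m(X)\to\P(c)(G_m(X))$, and the counit and coassociativity laws for this $c$-coalgebra follow from the comodule axioms for $\kappa$. Functoriality on coalgebra homomorphisms is inherited from naturality of $\P(\kappa)$ and $\P(\chi)$.

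Finally I would prove preservation of connected limits. Given a connected diagram $(X_j,\gamma_j)$ in $d\coalg$ with limit $X=\lim_j X_j$, computed in $\E$ by the first step, the fact that $\P(m)$ and $\P(m)\P(d)$ preserve connected limits shows that the parallel pair defining $G_m(X)$ is the limit of the parallel pairs defining the $G_m(X_j)$. As an equalizer is itself a finite connected limit, and connected limits commute with connected limits, the comparison map $G_m(\lim_j X_j)\to\lim_j G_m(X_j)$ is an isomorphism in $\E$; it is an isomorphism of $c$-coalgebras because $U^c$ creates the limit and the coaction structures are induced compatibly.

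The step I expect to be the main obstacle is the middle one: verifying that the equalizer genuinely carries a $c$-coalgebra structure rather than merely existing in $\E$. This is precisely where the bicomodule compatibility square \eqref{eqn.bicomod} is essential, guaranteeing that the left coaction is a map of right $\P(d)$-comodules and hence descends to the equalizer; checking the resulting counit and coassociativity laws, though routine, is the one place the argument goes beyond a formal consequence of connected-limit preservation.
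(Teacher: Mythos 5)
Your proposal is correct and follows essentially the same route as the paper: the functor is defined by the same equalizer of $\P(\chi)_X$ and $\P(m)(\gamma)$ inside $\P(m)(X)\rightrightarrows\P(m)\P(d)(X)$, the $c$-coalgebra structure is obtained because $\P(c)$ preserves that equalizer so the left coaction descends, and preservation of connected limits follows from $\P(m)$ and $\P(d)$ preserving connected limits together with commutation of connected limits with equalizers. Your explicit remark that the forgetful functors create the relevant connected limits is a detail the paper leaves implicit, but it does not change the argument.
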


\begin{proof}
This functor is defined similarly to composition of bicomodules; for a $d$-coalgebra $\kappa' \colon S \to \P(d)(S)$, define $m \tri_d S$ as the equalizer of the diagram
\[
\P(m)(S) \Tto[40pt]{\P(\chi)(S)}{\P(m)(\kappa')} \P(m \tri d)(S) \cong \P(m)(\P(d)(S)).
\]
The functor $\P(c)$ preserves connected limits, so using \cref{Pfunctor}, $\P(c)(m \tri_d S)$ is the equalizer of the diagram 
\[
\P(c \tri m)(S) \Tto[53pt]{\P(\id_c \tri \chi)(S)}{\P(c \tri m)(\kappa')} \P(c \tri m \tri d)(S).
\]
The morphism $\kappa \colon m \to c \tri m$ therefore induces a natural transformation from the first diagram to the second, and therefore a morphism $m \tri_D S \to \P(c)(m \tri_D S)$. Just like composition of bicomodules, the coalgebra equations for $m \tri_D S$ follow straightforwardly from those for $S$ and the bicomodule equations for $m$.

To see that this functor preserves connected limits, note that $\P(m)$ and $\P(d)$ preserve connected limits, as do equalizers.
\end{proof}

\begin{remark}\label{rem.pra}
While we do not prove that this functor is a parametric right adjoint in general, for sufficiently nice choices of the finite limit category $\E$ (such as presheaf categories) connected limit preserving functors are precisely the parametric right adjoints. Moreover, we also do not show that all parametric right adjoints between these copresheaf categories arise in this fashion, which we have come to expect is not the case without further assumptions. Were this to be true, with composition of bicomodules agreeing with composition of parametric right adjoints as expected, then given any two adjacent bicomodules one could compose them by composing their associated parametric right adjoint functors and extracting the corresponding bicomodule. This would suggest that bicomodule composition always exists, which as discussed above ought to require additional assumptions on the category $\E$.
\end{remark}

\begin{example}
One avenue for future work could be to work out the details and applications of bicomodules in $\poly_{\smcat}$, which should correspond to parametric right adjoint double functors between double categories of double copresheaves. Taking the perspective of \cite{spivak2012functorial} that parametric right adjoints describe ``data migration functors'' between copresheaf categories regarded as categories of database instances for a particular schema, this could contribute to the development of double categorical database theory as discussed in \cite{Lambert:blog}.
\end{example}

\printbibliography
\end{document}